\documentclass[12pt]{article}

\usepackage[english]{babel}
\usepackage{amsfonts}
\usepackage{amssymb}
\usepackage{amsthm}
\usepackage{bbold}
\usepackage{array}
\usepackage[section]{placeins}
\usepackage{float}
\usepackage{titlesec}
\usepackage{stmaryrd}
\usepackage{ulem}

\usepackage[letterpaper,top=2cm,bottom=2cm,left=3cm,right=3cm,marginparwidth=1.75cm]{geometry}

\usepackage{siunitx}
\usepackage{mathtools}
\usepackage{amsmath}
\usepackage{graphicx}
\usepackage[colorlinks=true, allcolors=blue]{hyperref}
\usepackage{caption}
\usepackage{subcaption}
\usepackage[table]{xcolor}
\usepackage{authblk}
\usepackage{csquotes}
\usepackage{tikz, tikz-3dplot}
\usepackage{pgfplots}
\usepackage{setspace}

\definecolor{lightgray}{gray}{0.9}

\newtheorem{theorem}{Theorem}
\newtheorem{remark}[theorem]{Remark}
\newtheorem{lemma}[theorem]{Lemma}
\newtheorem{proposition}[theorem]{Proposition}

\newcommand{\eps}{\varepsilon}
\newcommand{\dis}{\displaystyle}

\title{Effective approximations of solutions to highly oscillatory diffusion equations from coarse measurements}
\author{Claude Le Bris}
\author{Frédéric Legoll}
\author{Simon Ruget}
\affil{\small École Nationale des Ponts et Chaussées, Institut Polytechnique de Paris, CNRS, 6 et 8 avenue Blaise Pascal, 77455 Marne-la-Vall\'ee, France}
\affil{\small MATHERIALS project-team, Inria Paris, 48 rue Barrault, 75013 Paris, France}
\affil{\small Emails: \{claude.le-bris,frederic.legoll,simon.ruget\}@enpc.fr}

\begin{document}

\maketitle

\begin{abstract}
We approximate a diffusion equation with highly oscillatory coefficients with a diffusion equation with \textit{constant} coefficients. The approach is put in action in contexts where only partial information (namely the global energy stored in the physical system) is available. While the reconstruction of the microstructure is known to be an ill-posed problem, we show that the reconstruction of effective coefficients is possible and this even with only some coarse information. The strategy we present takes the form of a non-convex optimization problem. Homogenization theory provides elements for a rigorous foundation of the approach. Some algorithmic aspects are discussed in details. We provide a comprehensive set of numerical illustrations that demonstrate the practical interest of our strategy. The present work improves on the earlier works~\cite{le2018best,le2013approximation}.
\end{abstract}

\section{Introduction} \label{section:section1}

\paragraph{Mathematical setting.} Mathematically, the problem we address here is the following. Consider the linear diffusion equation
\begin{equation} \label{eq:diffusive}
  \left\{
  \begin{aligned}
    -\text{div}\left(A_\eps \nabla u_\eps\right) &= 0 \quad \text{in $\mathcal{D}$},
    \\
    \left(A_\eps \nabla u_\eps\right) \cdot n &= g \quad \text{on $\partial \mathcal{D}$},
  \end{aligned}
  \right.
\end{equation}
set on an open bounded domain $\mathcal{D} \subset \mathbb{R}^d$ ($d\geq 1$), and where $g$ is a boundary condition in
\begin{equation} \label{eq:def_L2m}
  L^2_m(\partial \mathcal{D}) = \left\{ g \in L^2(\partial \mathcal{D}), \ \ \int_{\partial \mathcal{D}} g = 0 \right\}.
\end{equation}
It involves a highly oscillatory coefficient $A_\eps$ taking (possibly random) values in $\mathbb{R}^{d\times d}$ and satisfying all the required assumptions so that~\eqref{eq:diffusive} is well-posed (see Section~\ref{subsection:homogenization} for a precise statement). The parameter $\eps$ denotes the characteristic length of variation of the coefficient $A_\eps$, which is assumed to be small with respect to the size of the domain~$\mathcal{D}$. In order to fix the ideas, one may consider a coefficient $A_\eps$ deriving from a $\mathbb{Z}^d$-periodic matrix field~$A_{\text{per}}$ in the sense
\begin{equation} \label{eq:periodic}
  \forall x \in \mathbb{R}^d, \ \ A_\eps(x) = A_{\text{per}}\left(\frac{x}{\eps}\right),
\end{equation}
where $A_{\text{per}}$ is a symmetric, matrix-valued field, bounded and bounded away from $0$. Our strategy is, however, not restricted to this particular instance of coefficient $A_\eps$ (at least from a computational perspective) and our assumptions will be made precise below. 

\medskip

Our purpose is to approximate the operator
\begin{equation*}
  \mathcal{L}_\eps : g \in L^2_m(\partial \mathcal{D}) \longrightarrow u_\eps(g) \in L^2_m(\mathcal{D}),
\end{equation*}
where $u_\eps(g)$ is the solution to~\eqref{eq:diffusive} in~$L^2_m(\mathcal{D})$, by an operator
\begin{equation*}
  \mathcal{L}_{\overline{A}} : g \in L^2_m(\partial \mathcal{D}) \longrightarrow u(\overline{A}, g) \in L^2_m(\mathcal{D}),
\end{equation*}
where $\overline{A}$ denotes a constant symmetric coefficient in $\mathbb{R}^{d\times d}$ to be carefully selected, and where $\overline{u} = u(\overline{A},g)$ denotes the solution (again with vanishing average on the boundary) to 
\begin{equation} \label{eq:diffusiveAbar}
  \left\{
  \begin{aligned}
    -\text{div}\left(\overline{A} \nabla \overline{u}\right) &= 0 \quad \text{in $\mathcal{D}$},
    \\
    \left(\overline{A} \nabla \overline{u}\right) \cdot n &= g \quad \text{on $\partial \mathcal{D}$}.
  \end{aligned}
  \right.
\end{equation}
Put into words, our intent is to identify the effective diffusion operator~$\mathcal{L}_{\overline{A}}$ with constant coefficients that best represents the diffusion operator~$ \mathcal{L}_\eps$ with highly oscillatory coefficients.

\medskip

In order to construct $\mathcal{L}_{\overline{A}}$, we assume that we only have access to the value of certain observables (or outputs), which are in a certain sense \textit{coarse} and are functions of the solutions to~\eqref{eq:diffusive} subjected to various boundary conditions $g$. These observables must be thought of as \textit{macroscale} responses of the system (acquired through experimental measurement for instance) to the imposed boundary conditions. In this work, we consider as our observable the stored energy $\mathcal{E}(A_\eps, g)$ defined by
\begin{equation} \label{eq:energy}
  \mathcal{E}(A_\eps, g) = \frac{1}{2} \int_{\mathcal{D}} \nabla u_\eps(g)^T A_\eps \nabla u_\eps(g) - \int_{\partial \mathcal{D}} g \, u_\eps(g) = - \frac{1}{2} \int_{\partial \mathcal{D}} g \, u_\eps(g)
\end{equation}
(the rightmost equality evidently derives from using~\eqref{eq:diffusive}). Other integrated (or coarse) quantities than the energy~\eqref{eq:energy} could be considered equally well. The point is, we use as an observable a quantity that does not explicitly require any \textit{microscale} knowledge of either $u_\eps(g)$ or $A_\eps$. In particular, we do not assume to know, for various $g\in L^2_m(\partial \mathcal{D})$, the full solution field~$u_\eps(g)$ throughout the whole computational domain $\mathcal{D}$. The only exception to that rule will of course be when we \textit{analyze} our approach and need to assess the quality of our results. On the other hand, in the \textit{operational} phase, no such information will be required. Additionally, we anticipate to have access to only a reduced amount of data -- at least from a practical perspective. We assume we have $d(d+1)/2$ boundary data~$g$ and, correspondingly, measurements of the energy.

\medskip

In comparison to our previous works~\cite{le2018best,le2013approximation}, where some more idealized settings were considered, we emphasize that the major two differences are that (a) we do not assume we have access to $u_\eps(g)$ itself throughout the domain but only to some coarse output (here, the stored energy) depending upon~$u_\eps(g)$, (b) we do not impose a distributed loading (that is a right-hand side~$f$ within~\eqref{eq:diffusive}) but some Neumann boundary conditions~$g$. This is motivated by our wish to bring our approach closer to the actual practice in mechanical engineering and materials science laboratories. It is indeed clearly too idealistic to assume the solution~$u_\eps(g)$ may be intimately known/measured/observed in its finest details. Likewise, if~\eqref{eq:diffusive} is a proxy for the elasticity equation, it is much easier to impose to a mechanical test piece a force at its surface, that is, mathematically, a Neumann boundary condition, than a force distributed throughout its bulk (that is, a right-hand side).

\paragraph{Relation to other approaches.} Our work lies at the intersection of several general fields of mathematical research.

First of all, it is an \textit{inverse problem}. Its specificity within this category, however, is that it is a \textit{multiscale} inverse problem. In the regime we consider, we implicitly assume that the size~$\eps$ of the oscillations is sufficiently small to render all the usual and efficient approaches of inverse problems unpractical, unless prohibitively powerful computing facilities are resorted to. The literature abounds in many efficient techniques to address inverse problems. Unfortunately here, our problem cannot be stated in a standard way, and $\eps$ treated as if it were of unit size. Therefore, the difficulty of a small $\eps$ must be embraced. Now, it has been known for decades that inverse problems with highly oscillatory coefficients may be intrinsically ill-posed (see the celebrated work~\cite{lions2005some}), unless they are carefully stated. Only looking at outputs of the problem, recovering the coefficient $A_\eps\in L^\infty(\mathcal{D})$ from macroscopic measurements of the solution to~\eqref{eq:diffusive} is an ill-posed problem in the limit of $\eps$ vanishing. An infinity of microstructures may indeed yield the same macroscopic outputs. Put differently, distinguishing between all the microstructures compatible with a given macroscopic behavior is only possible when strong a priori assumptions on the microstructures are made. In that direction, we mention works such as~\cite{abdulle2019numerical,frederick2014numerical,lochner2023identification} (in the same vein, see also~\cite{nolen2009fine} for some developments in a random context), which assume that the microstructure depends on a small number of parameters that are either constant throughout the domain or vary only at the macroscopic scale. Provided that the local behavior of the multi-scale medium is known, measurements of solutions are then used to identify these coarse scale parameters. Similarly in spirit, the so-called \textit{inverse homogenization} techniques (see~\cite{cherkaev2001inverse,cherkaev2008dehomogenization}) aim at determining some global properties of the microstructure based on bulk-averaged properties, again at the price of assumptions on the microstructure (e.g. two phase mixing, inclusions, \dots). Nevertheless, there are practically relevant cases where none of the above informations postulated at the microscopic scale is available, and such alternative cases are worth exploring.

Given the above ``obstruction'', we have to somehow reformulate the inverse problem. We achieve this proceeding as follows. First, we shift the focus from the identification (one way or another) of the \textit{coefficient} $A$ to that of the \textit{operator}~$\mathcal{L}$. Specifically, we build an approximation~$\mathcal{L}_{\overline{A}}$ of~$\mathcal{L}_\eps$, rather than an approximation~$\overline{A}$ of~$A_\eps$. Intuitively, we therefore circumvent the intrinsic difficulties of the Calder{\'o}n problem or any of its variants. Second, we consider problems that are expected to admit an homogenized limit with constant coefficients. Intuitively then, at least in this asymptotic regime and for such an equation, the problem is well-posed. Of course, a general arbitrary problem does not necessarily admit such an homogenized limit, but since we will be working for $\eps$ not asymptotically small but only \textit{very} small, we may succeed. Our work demonstrates this is indeed the case.

Another important remark is in order. The construction of some suitable~$\mathcal{L}_{\overline{A}}$ from~$\mathcal{L}_\eps$ can be read as a problem in \textit{operator learning}. In principle, this makes our problem a particular case of problems amenable to machine learning techniques, see~\cite{nolen2012multiscale, stuart2010inverse,bhattacharya2023learning,chung2023multi,park2022physics,halikias2023} for examples of works in this direction. The major difficulty, however, that again makes such very popular (and often astonishingly efficient) approaches unpractical here is that we only have a \textit{handful of data} available. Indeed, we typically put ourselves under the practical limitation of an experimental laboratory: the boundary data~$g$ for~\eqref{eq:diffusive} that can be implemented are only a few (we refer to e.g.~\cite{zbMATH06685276,zbMATH06566400,zbMATH02119069,ammari_uhlmann} for works on inverse problems where a similar constraint on the amount of available data is considered). There is consequently little hope to identify~$\mathcal{L}_{\overline{A}}$ using less than, say, 10 data and a machine learning technique.
 
\paragraph{Main contributions and articulation of the work.} We may summarize our contributions as follows:
\begin{enumerate}
\item We propose a strategy to define an \textit{effective} (matrix-valued) coefficient for~\eqref{eq:diffusive}. It is inspired by the work previously performed in~\cite{le2018best,le2013approximation}, which is brought here closer to the experimental practice. The approach is formulated as an optimization problem on the energy difference $\mathcal{E}(A_\eps, g) - \mathcal{E}(\overline{A}, g)$ (see~\eqref{eq:infsup} below). The strategy requires only coarse information, namely here the energy~\eqref{eq:energy} and the corresponding energy for the approximate model~\eqref{eq:diffusiveAbar}.
\item We show that our strategy is asymptotically consistent with homogenization theory in cases where the latter applies and predicts a constant homogenized coefficient: in the limit where the size~$\eps$ of the oscillations vanishes, our strategy indeed asymptotically agrees with this coefficient.
\item We provide an extensive set of numerical experiments that confirm the validity of our approach and compare its performance to some alternative strategies.
\item We introduce some noise within our setting from two different perspectives. First, we examine the robustness of our approach with respect to unexpected, extrinsic noise in the measurement. Second, we propose a variant of our approach that purposely introduces noise into the coefficient itself, in order to enhance its practical validity.
\end{enumerate}

\medskip

The findings of this work are not intended as definitive conclusions. We only offer a snapshot of our current ideas, which could motivate further research or follow-up works. These contributions may indeed be complemented in many directions. Let us only mention two such directions, both based upon the fact that, in some practical instances, some \textit{a priori} knowledge (obtained after previous experimental campaigns, for instance) may be available. Put differently, the effective coefficient~$\overline{A}$ is sometimes not sought \textit{in the blind}.

First, it is often the case in practice that a reasonably fair approximation~$\widehat{\overline{A}}$ of the effective coefficient~$\overline{A}$ (that is, of the operator~$\mathcal{L}_{\overline{A}}$) is known beforehand. In such a situation, the above approach may be adjusted upon \textit{linearizing} the problem at the neighborhood of the estimated coefficient~$\widehat{\overline{A}}$. This linearization simplifies the dependence with respect to the optimization parameter, and thus alleviates the numerical costs of the approach. A first step in that direction is presented in~\cite[Chapter~3]{PhDthesis}.

Second, the task assigned could be to not explicitly identify/approximate the effective coefficient~$\overline{A}$ (or, more precisely, again the operator~$\mathcal{L}_{\overline{A}}$), but to only \textit{classify} it as best as possible in a dictionary of a preselected set of plausible effective coefficients~$\displaystyle\{\overline{A}_j\}_{j=1,\dots,J}$ (and their associated operators~$\mathcal{L}_{\overline{A}_j}$). The above approach may again be modified in order to account for this alternative scenario, as shown by some preliminary developments in~\cite[Chapter~4]{PhDthesis}.

\paragraph{Plan.} Our article is organized as follows. In Section~\ref{section:section2}, we introduce in more details our main strategy, and we recall some results of homogenization theory. Section~\ref{section:section3big} is devoted to a comparison of our notion of effective coefficient with similar notions for coefficients obtained differently. In Section~\ref{section:section3}, we prove that, under a periodicity assumption and in the limit $\eps \to 0$, our approach recovers the coefficient provided by homogenization theory. In Section~\ref{section:section4}, we show theoretically that our strategy, which is based on the sole knowledge of coarse observables, is equivalent to a strategy similar in spirit and which exploits the knowledge of the full Neumann-to-Dirichlet map. Section~\ref{section:section5} presents the computational aspects and provides some numerical illustrations. Two test cases, both in dimension $d=2$, are examined in various regimes of $\eps$: a first one with a periodic coefficient, and a second one with a random stationary coefficient. We finally explore the effect of noise.

\section{Detailed description of our approach} \label{section:section2}

In this section, we formalize our approach in more details. Since it is inspired by homogenization theory, we start the section by recalling in Section~\ref{subsection:homogenization} its basic principles. For a complete introduction, we refer, for instance, to the textbooks~\cite{papanicolau1978asymptotic,blanc2023homogenization}. The reader familiar with the theory may easily skip Section~\ref{subsection:homogenization} and directly proceed to Section~\ref{subsection:formalization}. We introduce our approach in Section~\ref{subsection:infsupproblem}. It takes the form of an optimization problem that is described in Sections~\ref{subsection:supproblem} and~\ref{subsection:infproblem}.

\subsection{Homogenization theory as a guideline} \label{subsection:homogenization}

To recall the basic principles of homogenization theory, we focus on the periodic setting. 
Let $\mathcal{D} \subset \mathbb{R}^d$ be an open bounded domain (for any ambient dimension $d \geq 1$), and consider the equation 
\begin{equation*}
  \left\{
  \begin{aligned}
    -\text{div}\left(A_\eps \nabla u_\eps \right) &= 0 \quad \text{in $\mathcal{D}$},
    \\
    \left(A_\eps \nabla u_\eps\right)\cdot n &= g \quad \text{on $\partial \mathcal{D}$},
  \end{aligned}
  \right.
\end{equation*}
where the function $g$ is independent of $\eps$ and belongs to the space $L^2_m(\partial \mathcal{D})$ defined by~\eqref{eq:def_L2m}, i.e.
\begin{equation*}
  L^2_m(\partial \mathcal{D}) = \left\{g \in L^2(\partial \mathcal{D}), \ \ \int_{\partial \mathcal{D}} g = 0 \right\}.
\end{equation*}
We assume in this Section~\ref{subsection:homogenization} that the coefficient $A_\eps$ takes the form~\eqref{eq:periodic}, that is
\begin{equation*}
  A_\eps(x) = A_{\text{per}}\left( \frac{x}{\eps}\right),
\end{equation*}
where $A_{\text{per}}$ is $\mathbb{Z}^d$-periodic. In addition, we assume that there exist two real numbers $\alpha,\beta>0$ such that
\begin{equation} \label{eq:borne_Aper}
  A_{\text{per}} \in L^\infty(\mathbb{R}^d, \mathcal{S}_{\alpha, \beta}),
\end{equation}
with $\mathcal{S}_{\alpha, \beta}$ defined by
\begin{equation} \label{eq:Salphabeta}
  \mathcal{S}_{\alpha, \beta} = \left\{
  \begin{array}{c}
    M \in \mathbb{R}^{d\times d}_{\text{sym}} \quad \text{s.t.} \quad \forall \xi \in \mathbb{R}^d, \ \ \alpha \, |\xi|^2 \leq \xi^T M \xi
    \\
    \text{and} \quad \forall \xi, \eta \in \mathbb{R}^d, \ \ |\xi^T M \eta| \leq \beta \, |\xi| \, |\eta|
  \end{array}
  \right\},
\end{equation}
where $\mathbb{R}^{d\times d}_{\text{sym}}$ denotes the space of symmetric $d \times d$ matrices. In contrast to the periodic assumption, this boundedness assumption will hold throughout our work, in the form of~\eqref{eq:borne_Aeps} below.

The above assumption~\eqref{eq:borne_Aper} ensures the existence and the uniqueness (up to the addition of a constant) of a solution to~\eqref{eq:diffusive}. 
As a normalization, we consider $u_\eps$ the unique solution in the space $H^1_m(\mathcal{D})$ defined by
\begin{equation*}
  H^1_m(\mathcal{D}) = \left\{u \in H^1(\mathcal{D}), \ \ \int_{\partial\mathcal{D}} u = 0 \right\}.
\end{equation*}

\medskip

In the periodic setting, the solution $u_\eps$ converges weakly in $H^1(\mathcal{D})$ to $u_\star$, the unique solution in $H^1_m(\mathcal{D})$ to the equation 
\begin{equation} \label{eq:eqstar}
  \left\{
  \begin{aligned}
    -\text{div}\left(A_\star\nabla u_\star\right) &= 0 \quad \text{in $\mathcal{D}$},
    \\
    \left(A_\star \nabla u_\star\right)\cdot n &= g \quad \text{on $\partial \mathcal{D}$},
  \end{aligned}
  \right.
\end{equation}
where $A_\star \in \mathbb{R}^{d\times d}_{\text{sym}}$ is a constant and $g$-independent \textit{homogenized} coefficient whose expression is given in~\eqref{eq:astar} below. We recall that, since $A_{\text{per}}(x) \in \mathcal{S}_{\alpha, \beta}$ for any $x \in \mathbb{R}^d$, we have that $A_\star$ is a symmetric matrix which belongs to $\mathcal{S}_{\alpha, \beta}$.

In addition, the energy associated to~\eqref{eq:diffusive}, already introduced in~\eqref{eq:energy}, is given by
\begin{equation*} \label{eq:energyeps}
  \mathcal{E}(A_\eps, g) = \frac{1}{2} \int_{\mathcal{D}} \nabla u_\eps(g)^T A_\eps \nabla u_\eps(g)- \int_{\partial \mathcal{D}} g \, u_\eps(g) = -\frac{1}{2}\int_{\partial \mathcal{D}} g \, u_\eps(g),
\end{equation*}
where the latter equality is obtained using the variational formulation of~\eqref{eq:diffusive}. We will repeatedly use this rewriting in the sequel.

For any given $g \in L^2_m(\partial \mathcal{D})$, this energy converges (when $\eps \to 0$) to the energy of the homogenized system 
\begin{equation*} 
  \mathcal{E}(A_\star, g) = \frac{1}{2} \int_{\mathcal{D}} \nabla u_\star(g)^T A_\star \nabla u_\star(g) - \int_{\partial \mathcal{D}} g \, u_\star(g) = -\frac{1}{2}\int_{\partial \mathcal{D}} g \, u_\star(g).
\end{equation*}


\medskip

The homogenized coefficient in~\eqref{eq:eqstar} is given by
\begin{equation} \label{eq:astar}
  [A_\star]_{ij} = \int_Q (e_i + \nabla w_{e_i})^T A_{\text{per}} (e_j+ \nabla w_{e_j}),
\end{equation}
where $Q=(0,1)^d$ is the periodic cell of $A_{\text{per}}$, $e_i$ designates the $i$-th vector of the canonical basis of $\mathbb{R}^d$, and where, for any $p \in \mathbb{R}^d$, $w_p$ is the unique (up to the addition of a constant) solution to the \textit{corrector} equation
\begin{equation} \label{eq:corrector}
  \left\{
  \begin{aligned}
  -\text{div}\left(A_{\text{per}}(\nabla w_p + p)\right) &= 0 \quad \text{in $\mathbb{R}^d$},
  \\
  \text{$w_p$ is $Q$-periodic}.
  \end{aligned}
  \right.
\end{equation}
The formulas~\eqref{eq:astar} and~\eqref{eq:corrector} involve explicitly the coefficient $A_{\text{per}}$, which might not be available in practice. This is precisely the point of our approach to circumvent this difficulty. In any event, the piece of homogenization theory that we have just presented will be useful to theoretically investigate our approach in the setting of periodic homogenization (see Section~\ref{section:section3}).

\subsection{Our approach} \label{subsection:formalization}

We now introduce our approach. It is not restricted to the periodic setting considered in Section~\ref{subsection:homogenization}. We assume here that there exist two real numbers $\alpha,\beta>0$ such that, for any $\eps$,
\begin{equation} \label{eq:borne_Aeps}
  A_\eps \in L^\infty(\mathcal{D}, \mathcal{S}_{\alpha, \beta}),
\end{equation}
with $\mathcal{S}_{\alpha, \beta}$ defined by~\eqref{eq:Salphabeta}.


\subsubsection{An $\inf \sup$ formulation} \label{subsection:infsupproblem}

We consider a constant, symmetric, positive definite coefficient $\overline{A} \in \mathbb{R}^{d \times d}_{\text{sym}}$. For any $g\in L^2_m(\partial \mathcal{D})$, we consider the problem~\eqref{eq:diffusiveAbar} introduced above, which we reproduce here for the convenience of the reader: search for the unique solution $\overline{u}$ in $H^1_m(\mathcal{D})$ to
\begin{equation*}
  \left\{
  \begin{aligned}
    -\text{div}\left( \overline{A} \nabla \overline{u} \right) &= 0 \quad \text{in $\mathcal{D}$,}
    \\
    \left( \overline{A} \nabla \overline{u}\right)\cdot n &= g \quad \text{on $\partial \mathcal{D}$}.
  \end{aligned}
  \right.
\end{equation*}
In order to define the constant coefficient $\overline{A}_\eps \in \mathbb{R}^{d \times d}_{\text{sym}}$ 
such that the solution $\overline{u}_\eps$ to~\eqref{eq:diffusiveAbar} with $\overline{A} = \overline{A}_\eps$ best approximates the solution $u_\eps$ to~\eqref{eq:diffusive}, one may consider the following optimization problem:
\begin{equation} \label{eq:problemofkunli}
  \inf_{\overline{A} \in \mathbb{R}^{d\times d}_{\text{sym}}, \, \overline{A} > 0} \ \sup_{\scriptsize \begin{aligned} & g \in L^2_m(\partial \mathcal{D}), \\[-0.5em] \|&g\|_{L^2(\partial \mathcal{D})}=1 \end{aligned}} \| u_\eps(g) - \overline{u}(\overline{A}, g) \|_{L^2(\mathcal{D})},
\end{equation}
where $\overline{A} > 0$ indicates that we optimize upon the set of positive definite matrices. Note that the dependence on $g$ of $\overline{u}$ and $u_\eps$, solutions to respectively~\eqref{eq:diffusiveAbar} and~\eqref{eq:diffusive}, as well as the dependence on $\overline{A}$ of $\overline{u}$, are explicitly denoted. 

This corresponds \textit{mutatis mutandis} to the idea introduced in~\cite{le2013approximation} for a different problem, namely a diffusion equation with homogeneous Dirichlet boundary conditions and a right hand side imposed on the whole domain $\mathcal{D}$. This problem~\eqref{eq:problemofkunli}, although quite natural to consider, is challenging since the norm in~\eqref{eq:problemofkunli} uses information at the fine scale, which may not be easily accessible from an experimental point of view: it requires, for all (or at least some) boundary conditions $g$, to know the solution $u_\eps(g)$ throughout the domain $\mathcal{D}$. Here we circumvent this limitation and show that only having much coarser information (namely the value of the energy~\eqref{eq:energy}) may be sufficient to identify accurate effective coefficients. 

\medskip

Rather than~\eqref{eq:problemofkunli}, we consider the following optimization problem:
\begin{equation} \label{eq:infsup}
   I_\eps = \inf_{\overline{A} \in \mathcal{S}_{\alpha, \beta}} \ \sup_{\scriptsize \begin{aligned} &g \in L^2_m(\partial \mathcal{D}), \\[-0.5em] \|&g\|_{L^2(\partial \mathcal{D})}=1 \end{aligned}} \left| \mathcal{E}(A_\eps, g) - \mathcal{E}(\overline{A}, g) \right|,
\end{equation}
where $\mathcal{S}_{\alpha, \beta}$ is defined in~\eqref{eq:Salphabeta} as the set of constant, symmetric, $\alpha$-coercive and $\beta$-bounded matrices. The energy $\mathcal{E}(A_\eps, g)$ is given by~\eqref{eq:energy} while $\mathcal{E}(\overline{A}, g)$ is obviously given by
\begin{equation*}
  \mathcal{E}(\overline{A}, g) = \frac{1}{2} \int_{\mathcal{D}} \nabla \overline{u}(g)^T \, \overline{A} \, \nabla \overline{u}(g) -\int_{\partial \mathcal{D}} g \, \overline{u}(g) = -\frac{1}{2}\int_{\partial \mathcal{D}} g \, \overline{u}(g).
\end{equation*}
Note that, in~\eqref{eq:infsup}, we choose to optimize over all boundary conditions in $L^2_m(\partial \mathcal{D})$. Instead, we could have considered the problem
\begin{equation} \label{eq:infsupH1demi}
  \inf_{\overline{A} \in \mathcal{S}_{\alpha, \beta}} \ \sup_{\scriptsize \begin{aligned} &g \in H^{-1/2}_m(\partial \mathcal{D}), \\[-0.5em] \|&g\|_{H^{-1/2}(\partial \mathcal{D})}=1 \end{aligned}} \left| \mathcal{E}(A_\eps, g) - \mathcal{E}(\overline{A}, g) \right|,
\end{equation}
where $\dis H^{-1/2}_m(\partial \mathcal{D}) = \left\{ g \in H^{-1/2}(\partial \mathcal{D}), \ \ \langle g, 1 \rangle_{H^{-1/2}(\partial \mathcal{D}), H^{1/2}(\partial \mathcal{D})} = 0 \right\}$, and where $\mathcal{E}(A_\eps, g)$ is redefined as $\langle g, u_\eps(g)\rangle_{H^{-1/2}(\partial \mathcal{D}), H^{1/2}(\partial \mathcal{D})}$ and similarly for $\mathcal{E}(\overline{A},g)$. Problem~\eqref{eq:infsupH1demi} is somewhat more natural from a theoretical perspective. Since $L^2_m(\partial \mathcal{D}) \subset H^{-1/2}_m(\partial \mathcal{D})$, we of course have
\begin{equation} \label{eq:tempo}
\sup_{\scriptsize g \in H^{-1/2}_m(\partial \mathcal{D})} \frac{\left| \mathcal{E}(A_\eps, g) - \mathcal{E}(\overline{A}, g) \right|}{\|g\|^2_{H^{-1/2}(\partial \mathcal{D})}} \geq \sup_{\scriptsize g \in L^2_m(\partial \mathcal{D})} \frac{\left| \mathcal{E}(A_\eps, g) - \mathcal{E}(\overline{A}, g) \right|}{\|g\|^2_{H^{-1/2}(\partial \mathcal{D})}},
\end{equation}
and using the density of $L^2_m(\partial \mathcal{D})$ in $H^{-1/2}_m(\partial \mathcal{D})$, we see that the above bound is actually an equality. However, the right-hand side of~\eqref{eq:tempo} is different from the quantity $\dis \sup_{\scriptsize g \in L^2_m(\partial \mathcal{D})} \frac{\left| \mathcal{E}(A_\eps, g) - \mathcal{E}(\overline{A}, g) \right|}{\|g\|^2_{L^2(\partial \mathcal{D})}}$ that we consider in~\eqref{eq:infsup}.

There are two motivations for considering~\eqref{eq:infsup} rather than~\eqref{eq:infsupH1demi}. First, it is easier in practice to manipulate functions of unit $L^2$-norm. Second, we are not able to generalize to the case~\eqref{eq:infsupH1demi} the proof of Proposition~\ref{prop:Iepsto0} and, thus, to show that our approach recovers the homogenized coefficient $A_\star$ in the limit $\eps \to 0$ (see Proposition~\ref{prop:quasiminimizers}). 

\begin{remark} \label{remark:spaceA}
  In contrast to~\eqref{eq:problemofkunli}, we minimize in~\eqref{eq:infsup} over the set $\mathcal{S}_{\alpha, \beta}$ defined in~\eqref{eq:Salphabeta}. In particular, the upper bound is mandatory in order to prove our consistency result in Section~\ref{section:section3}. From a modelling point of view, this constraint is natural. In particular, in the periodic setting, we know that, if $A_{\text{per}}(x)$ belongs to $\mathcal{S}_{\alpha, \beta}$ for any $x$, then the homogenized coefficient $A_\star$ belongs to $\mathcal{S}_{\alpha, \beta}$. From a computational point of view, we omit the constraints and work with matrices in $\mathbb{R}^{d \times d}_{\text{sym}}$. This relaxation never raised any issue in our simulations in the sense that the optimization algorithm never explored outside of $\mathcal{S}_{\alpha, \beta}$. 
\end{remark}

Note that~\eqref{eq:infsup} depends on the knowledge of the observable $\mathcal{E}(A_\eps, g)$ (that could be e.g. experimentally measured) but neither on $A_\eps$, nor on the fields $u_\eps$ themselves.

In the following section, we study separately the $\sup$ and the $\inf$ problems of the formulation~\eqref{eq:infsup}. 

\begin{remark}
We focus here on the (scalar-valued) diffusion problem~\eqref{eq:diffusive}, complemented with Neumann boundary conditions. In principle, our approach~\eqref{eq:infsup} can be extended to the (vector-valued) linear elasticity setting, with again Neumann-type boundary conditions (which amount, in this context, to loading the material through traction boundary conditions). The energy $\mathcal{E}$ is then the mechanical energy stored in the system. We do not pursue in that direction.
\end{remark}

\subsubsection{The $\sup$ problem} \label{subsection:supproblem}

We focus here on the $\sup$ subproblem within~\eqref{eq:infsup} and first show that, for any fixed $\overline{A} \in \mathcal{S}_{\alpha, \beta}$, the supremum $\Psi_\eps(\overline{A})$ is attained, where $\Psi_\eps(\overline{A})$ is defined by
\begin{equation} \label{eq:supproblem}
  \Psi_\eps(\overline{A}) = \sup_{\scriptsize \begin{aligned} &g \in L^2_m(\partial \mathcal{D}), \\[-0.5em] \|&g\|_{L^2(\partial \mathcal{D})}=1 \end{aligned}} \psi_\eps(\overline{A},g),
\end{equation}
with
\begin{equation} \label{eq:smallpsi}
  \psi_\eps(\overline{A},g) = \left| \mathcal{E}(A_\eps, g) - \mathcal{E}(\overline{A}, g) \right|.
\end{equation}

Let $\overline{A} \in \mathcal{S}_{\alpha, \beta}$ be given. We introduce the operator $\mathcal{T}_{\overline{A}}$ such that, for any $g \in L^2_m(\partial \mathcal{D})$, $\mathcal{T}_{\overline{A}} \, g = \overline{u}(g) \in L^2_m(\partial \mathcal{D})$, where $\overline{u}(g) \in H^1_m(\mathcal{D})$ is the solution to~\eqref{eq:diffusiveAbar} with zero mean value on the boundary $\partial \mathcal{D}$. We also introduce the operator $\mathcal{T}_\eps$ defined as follows: for any $g \in L^2_m(\partial \mathcal{D})$, $\mathcal{T}_\eps \, g = u_\eps(g) \in L^2_m(\partial \mathcal{D})$, where $u_\eps(g) \in H^1_m(\mathcal{D})$ is the solution to~\eqref{eq:diffusive} with zero mean value on the boundary $\partial \mathcal{D}$.

Then, we rewrite the difference~\eqref{eq:smallpsi} as
\begin{equation*}
  \psi_\eps(\overline{A},g) = \left| \int_{\partial \mathcal{D}} g \, \mathcal{H}_{\eps, \overline{A}}(g) \right|
\end{equation*}
where 
\begin{equation*}
  \mathcal{H}_{\eps, \overline{A}} := \frac{1}{2}\left(\mathcal{T}_\eps -\mathcal{T}_{\overline{A}}\right).
\end{equation*}
The operator $\mathcal{T}_{\overline{A}}$, considered as an operator from $L^2_m(\partial \mathcal{D})$ to $L^2_m(\partial \mathcal{D})$, is linear, 
compact (since the injection of $H^1(\mathcal{D})$ in $L^2(\partial \mathcal{D})$ is compact) and self-adjoint. Indeed, for any $f,g \in L^2_m(\partial \mathcal{D})$, the following holds:
\begin{align*}
  \int_{\partial \mathcal{D}} f \, \overline{u}(g) &= \int_{\mathcal{D}} \nabla \overline{u}(g) \cdot (\overline{A} \, \nabla \overline{u}(f)) &\text{[using the variational formulation of~\eqref{eq:diffusiveAbar}]} \\
  &= \int_{\mathcal{D}} \nabla \overline{u}(f) \cdot (\overline{A} \, \nabla \overline{u}(g)) & \text{[by symmetry of $\overline{A}$]} \\
  &= \int_{\partial \mathcal{D}} g \, \overline{u}(f). & \text{[using the variational formulation of~\eqref{eq:diffusiveAbar}]}
\end{align*}
Using the same manipulations, we show that the linear and compact operator $\mathcal{T}_\eps$ is self-adjoint. The operator $\mathcal{H}_{\eps, \overline{A}}$ is thus a linear, self-adjoint and compact operator. The eigenvalues of $\mathcal{H}_{\eps, \overline{A}}$ hence form a sequence of real numbers $\left\{\lambda_{\eps, \overline{A}}^j\right\}_j$ that converges to $0$. We denote by $\lambda_{\eps, \overline{A}}^{\sup}$ the eigenvalue of largest absolute value, and by $g_{\eps, \overline{A}}^{\sup}$ an associated normalized eigenmode. We thus have
\begin{equation*}
  \sup_{\scriptsize \begin{aligned} &g \in L^2_m(\partial \mathcal{D}), \\[-0.5em] \|&g\|_{L^2(\partial \mathcal{D})}=1 \end{aligned}} \psi_\eps(\overline{A},g) = \left| \lambda_{\eps, \overline{A}}^{\sup} \right|,
\end{equation*}
which is attained at $g_{\eps, \overline{A}}^{\sup}$:
\begin{equation*}
  \sup_{\scriptsize \begin{aligned} &g \in L^2_m(\partial \mathcal{D}), \\[-0.5em] \|&g\|_{L^2(\partial \mathcal{D})}=1 \end{aligned}} \psi_\eps(\overline{A},g) = \psi_\eps\left(\overline{A},g_{\eps, \overline{A}}^{\sup}\right).
\end{equation*}

\medskip

From a numerical standpoint, we will of course not be able to look for this largest eigenvalue of $\mathcal{H}_{\eps, \overline{A}}$ in the whole infinite dimensional space $L^2_m(\partial \mathcal{D})$. Instead, we introduce a finite dimensional subspace of $L^2_m(\partial \mathcal{D})$ of the form
\begin{equation} \label{eq:VnP}
  V_n^P(\partial \mathcal{D}) = \left\{ g \in L^2_m(\partial \mathcal{D}) \text{ s.t. } \exists \, (c_p)_{1\leq p\leq P} \in \mathbb{R}^P, \ g = \sum_{p=1}^P c_p \, \phi_p \text{ and } \sum_{p=1}^P c_p^2 = 1 \right\},
\end{equation}
using orthonormal functions of $L^2_m(\partial\mathcal{D})$, denoted $\{ \phi_p \}_{1\leq p \leq P}$. We now address the selection of the family $\{ \phi_p \}_{1 \leq p \leq P}$ and of the dimension $P$.

To start with, let us focus on the regime of vanishing $\eps$. Homogenization theory assesses that the operator $\mathcal{T}_\eps$ (when viewed as an operator in $\mathcal{L}(L^2_m(\partial \mathcal{D}))$) converges to $\mathcal{T}_{A_\star}$. Thus, for any $\overline{A} \neq A_\star$, the operator $\mathcal{H}_{\eps, \overline{A}}$ is well approximated by $\mathcal{H}_{\star, \overline{A}}$ defined by 
\begin{equation*}
  \mathcal{H}_{\star, \overline{A}} := \frac{1}{2}\left(\mathcal{T}_{A_\star} -\mathcal{T}_{\overline{A}}\right).
\end{equation*}
Furthermore, up to a subsequence extraction, $g_{\eps, \overline{A}}^{\sup}$ converges to an eigenvector associated to the eigenvalue of $\mathcal{H}_{\star, \overline{A}}$ with maximal absolute value. In order to define our family $\{ \phi_p \}_{1\leq p \leq P}$, we first investigate the case of spherical coefficients $A_\star = a_\star \, \text{Id}$ and $\overline{A} = \overline{a}\, \text{Id}$, and then, by extension, we use the family thus constructed in more general settings. In this case, $\mathcal{H}_{\star, \overline{A}}$ takes the form $\frac{1}{2}(a_\star^{-1} - \overline{a}^{-1}) \mathcal{R}$, where $\mathcal{R}$ designates the operator that associates to any $g \in L^2_m(\partial \mathcal{D})$ the trace on $\partial \mathcal{D}$ of the solution $w=w(g) \in H^1_m(\mathcal{D})$ to
\begin{equation} \label{eq:laplacianinverse}
  \left\{
  \begin{aligned}
    -\Delta w &= 0 \quad \text{in $\mathcal{D}$},\\
    \nabla w \cdot n &= g \quad \text{on $\partial \mathcal{D}$}.
  \end{aligned}
  \right.
\end{equation}
It would thus be sufficient to consider the eigenmodes of the latter operator with largest eigenvalue. Such eigenmodes are well defined since $\mathcal{R}$ is linear, self-adjoint and compact (for the same reasons that explain that the operators $\mathcal{T}_\eps$ and $\mathcal{T}_{\overline{A}}$ are). The operator is also positive-definite since, for any $g \in L^2_m(\partial \mathcal{D})$, we compute
$$
\int_{\partial \mathcal{D}} g \, \mathcal{R}(g) = \int_{\partial \mathcal{D}} g \, w = \int_{\mathcal{D}} \nabla w \cdot \nabla w \geq 0,
$$
where the last equality stems for the variational formulation of~\eqref{eq:laplacianinverse} (in addition, the above right-hand side vanishes if and only if $g$ vanishes). We denote $\lambda_p$ the largest $p$-th eigenvalue of the operator $\mathcal{R}$, and $\phi_p$ the associated eigenmode. If multiple eigenmodes are associated to the eigenvalue $\lambda_p$, then we consider all of them, and we duplicate the eigenvalue $\lambda_p$ in our list.

Now that we have explained how we choose $\phi_p$, let us explain how we choose $P$. We first observe that it seems necessary to consider at least $\frac{d(d+1)}{2}$ boundary conditions. Indeed, in the limit $\eps\to 0$, we have $\Psi_\eps(A_\star) \to 0$ (see Section~\ref{section:section3} below). Hence, we are to determine a constant symmetric matrix in $\mathbb{R}^{d\times d}$, that is $\frac{d(d+1)}{2}$ real numbers, from the knowledge of $P$ scalar equations $\mathcal{E}(\overline{A},g) = \mathcal{E}(A_\star,g)$. The inverse problem would obviously be underdetermined if we were to consider less than $\frac{d(d+1)}{2}$ scalar equations. Our numerical tests reveal that considering $P = \frac{d(d+1)}{2}$ is sufficient (see Section~\ref{section:changingsupbymax}).

For \textit{larger} values of $\eps$, the operator $\mathcal{H}_{\eps, \overline{A}}$ may be poorly approximated by $\mathcal{H}_{\star, \overline{A}}$. Hence, we enrich the space by including $P \geq \frac{d(d+1)}{2}$ functions $\phi_p$, that are again defined as eigenvectors of the operator $\mathcal{R}$.

\subsubsection{The $\inf$ problem} \label{subsection:infproblem}

We now focus on the $\inf$ subproblem within~\eqref{eq:infsup}, that is
\begin{equation} \label{eq:infproblem}
  \inf_{\overline{A} \in \mathcal{S}_{\alpha, \beta}} \Psi_\eps(\overline{A}),
\end{equation}
where $\Psi_\eps(\overline{A})$ is defined by~\eqref{eq:supproblem}.

This problem is a priori challenging to solve for two reasons. On the one hand, the dependence with respect to $\overline{A}$ is non-convex. On the other hand, the optimization set $\mathcal{S}_{\alpha, \beta}$ is delicate to handle.

First, the application $\overline{A} \rightarrow \Psi_\eps(\overline{A})$ is not convex. Figure~\ref{fig:psinonconvex} shows a one dimensional illustration. In practice, we apply a gradient descent together with a line search strategy to solve this $\inf$ problem. In our numerical experiments, this was sufficient to capture a satisfying approximation of $\overline{A}_\eps$.

A strategy to simplify the dependence with respect to $\overline{A}$ in some specific contexts is introduced in~\cite[Chapter~3]{PhDthesis}. The starting point lies in the fact that, in practical applications, it is common to have a ``good'' initial guess for the effective coefficient. In~\cite[Chapter~3]{PhDthesis}, the unknown coefficient $\overline{A}$ is assumed to lie in a neighbourhood of a known coefficient $\overline{A}_0$. Therefore, a perturbative development of the form
\begin{equation*}
  \overline{A} \approx \overline{A}_0 + \eta \, \overline{A}_1
\end{equation*}
makes sense (with $\eta$ the typical small distance between $\overline{A}$ and $\overline{A}_0$). Using this development, the energy can be linearized and, finally, the problem can be rewritten as an optimization problem upon an objective function quadratic in the unknown $\overline{A}_1$.

\begin{figure}[htbp]
  \centering
  \begin{tikzpicture}
    \begin{axis}[
      	xlabel={$\overline{A}$},
      	ylabel={\qquad $\psi_\eps(\overline{A})$}, 
	xmin=0, xmax=5,
	ymin=-0.01, ymax=0.15,
	grid=both,
      ]
      \addplot table[x=X, y=Y, col sep=space] {./Data/1D_Psi_Epsilon.txt};
      \addplot table[x=X, y=Y, col sep=space] {./Data/1D_Astar.txt};
      \legend{$\psi_\eps(\overline{A})$, $\overline{A} = A_\star$};
    \end{axis}
  \end{tikzpicture}
  \caption{In the one dimensional setting, we plot $\overline{A} \in (0, +\infty) \rightarrow \Psi_\eps(\overline{A})$ with $A_\eps(x) = 2 + \cos(2\pi x/\eps)$ and $\eps = 10^{-3}$.}
  \label{fig:psinonconvex}
\end{figure}

Second, and as discussed already in Remark~\ref{remark:spaceA}, the problem is solved on the whole space $\mathbb{R}^{d \times d}_{\text{sym}}$ of symmetric matrices rather than in the set $\mathcal{S}_{\alpha, \beta}$.

\section{Elements of theoretical analysis} \label{section:section3big}

In this section, we compare the effective coefficient provided by our strategy to other relevant coefficients. In Section~\ref{section:section3}, we prove that, under a periodicity assumption and in the limit $\eps \to 0$, our approach recovers the homogenized coefficient (see Proposition~\ref{prop:quasiminimizers}). In Section~\ref{section:section4}, we show theoretically (see Proposition~\ref{prop:PsiuBoundaryPsiE}) that our strategy, which is based on the sole knowledge of coarse observables, is equivalent to a strategy similar in spirit but that exploits the knowledge of the full Neumann-to-Dirichlet map (namely the strategy consisting in minimizing~\eqref{eq:supproblemboundary} below).

\subsection{Asymptotic consistency with homogenization theory} \label{section:section3}

In this section, we present some theoretical foundations for the approach introduced in Section~\ref{section:section2}. We are going to compare, in the limit when the size of the oscillations $\eps$ vanishes, our effective coefficient, obtained as a minimizer to~\eqref{eq:infsup}, with the homogenized coefficient $A_\star$ given by~\eqref{eq:astar}. For simplicity, we restrict ourselves to the periodic setting given by~\eqref{eq:periodic}. The extension of the current theoretical results to other settings is discussed in Remark~\ref{rem:extension} below. From the practical viewpoint, we recall that our approach, as described in Section~\ref{subsection:formalization}, can be put into practice in very general settings. 

We recall that~\eqref{eq:infsup} reads
$$
I_\eps = \inf_{\overline{A} \in \mathcal{S}_{\alpha, \beta}} \Psi_\eps(\overline{A}),
$$
where $\Psi_\eps$ is defined by~\eqref{eq:supproblem}.

\medskip

The proof of our consistency result follows a path similar to that in~\cite{le2018best}. The first step is to show the convergence of $I_\eps$ to $0$ as $\eps$ goes to $0$ (see Proposition~\ref{prop:Iepsto0}). Since $I_\eps$ is by definition non-negative and also smaller than $\Psi_\eps(\overline{A})$ for any admissible coefficient $\overline{A}$, this is achieved by showing the convergence of $\Psi_\eps(A_\star)$ to $0$ (recall indeed that $A_\star$ belongs to $\mathcal{S}_{\alpha, \beta}$ and is thus an admissible coefficient). This latter convergence is a direct consequence of the convergence of $A_\eps$ to $A_\star$ in the sense of homogenization.

The second step consists in showing the existence of minimizers to~\eqref{eq:infsup}, at fixed $\eps$. This is performed in Proposition~\ref{prop:exist_min} (note however that we are not able to show the uniqueness of such a minimizer).

The third step consists in proving that any subsequential limit $\overline{A}^{\#}$ to a sequence of minimizers in $\mathcal{S}_{\alpha, \beta}$ is in fact equal to $A_\star$ (see Proposition~\ref{prop:quasiminimizers}). From the convergence of $I_\eps$ to 0, we start by deducing that the bilinear form $\dis (f,g) \in L^2_m(\partial \mathcal{D}) \times L^2_m(\partial \mathcal{D}) \mapsto \int_{\partial \mathcal{D}} f \, u(\overline{A}^{\#}, g)$ is equal to the bilinear form $\dis (f,g) \in L^2_m(\partial \mathcal{D}) \times L^2_m(\partial \mathcal{D}) \mapsto \int_{\partial \mathcal{D}} f \, u(A_\star, g)$. Demonstrating that $A_\star = \overline{A}^{\#}$ then amounts to showing the invertibility of a matricial representant of the bilinear form when restricted to a selected finite-dimensional subspace of $L^2_m(\partial \mathcal{D})$ (see Lemma~\ref{lemma:lemma1}). This requires revisiting the arguments in~\cite{le2018best} since the bilinear forms considered here are different.

\medskip

We now proceed in details, and start by showing the convergence of $I_\eps$ to $0$.

\begin{proposition}[Convergence of $I_\eps$ to $0$] \label{prop:Iepsto0}
  Consider the periodic setting~\eqref{eq:periodic} with the assumption~\eqref{eq:borne_Aper}. The optimization problem~\eqref{eq:infsup} satisfies
  \begin{equation} \label{eq:Iepsto0}
    \lim_{\eps \to 0} I_\eps = 0.
  \end{equation}
\end{proposition}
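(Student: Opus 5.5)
Since $I_\eps \geq 0$ and $A_\star \in \mathcal{S}_{\alpha,\beta}$ is admissible in~\eqref{eq:infsup}, we have $0 \leq I_\eps \leq \Psi_\eps(A_\star)$. It therefore suffices to show $\Psi_\eps(A_\star) \to 0$. By the spectral discussion of Section~\ref{subsection:supproblem}, $\Psi_\eps(A_\star)$ equals the operator norm of $\mathcal{H}_{\eps,A_\star} = \frac12(\mathcal{T}_\eps - \mathcal{T}_{A_\star})$ on $L^2_m(\partial\mathcal{D})$. So the goal is the \emph{norm} convergence $\|\mathcal{T}_\eps - \mathcal{T}_{A_\star}\|_{\mathcal{L}(L^2_m(\partial \mathcal{D}))} \to 0$, which upgrades the pointwise (fixed $g$) homogenization result recalled in Section~\ref{subsection:homogenization}.

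I would argue by contradiction. Suppose there exist $\delta>0$, a sequence $\eps_n\to 0$, and $g_n \in L^2_m(\partial\mathcal{D})$ with $\|g_n\|_{L^2(\partial \mathcal{D})}=1$ such that $\|\mathcal{T}_{\eps_n} g_n - \mathcal{T}_{A_\star} g_n\|_{L^2(\partial\mathcal{D})} \geq \delta$. Up to extraction, $g_n \rightharpoonup g$ weakly in $L^2_m(\partial\mathcal{D})$. Write $u_n = u_{\eps_n}(g_n)$. Testing the variational formulation with $u_n$, and using the trace inequality together with a Poincaré--Wirtinger inequality for functions with vanishing boundary mean, gives $\alpha\|\nabla u_n\|^2_{L^2} \leq C\|g_n\|_{L^2(\partial \mathcal{D})}\|\nabla u_n\|_{L^2}$. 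Hence $u_n$ is bounded in $H^1_m(\mathcal{D})$ uniformly in $n$, and up to extraction $u_n \rightharpoonup u$ weakly in $H^1$. By compactness of the trace $H^1(\mathcal{D})\to L^2(\partial\mathcal{D})$, we also get $u_n|_{\partial\mathcal{D}} \to u|_{\partial\mathcal{D}}$ strongly in $L^2$.

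The key step is to identify $u = u_\star(g)$, i.e.\ to show that homogenization still holds when the Neumann data $g_n$ converge only weakly in $L^2(\partial\mathcal{D})$. I would use Tartar's oscillating test functions: for $\varphi\in C^\infty(\overline{\mathcal{D}})$, test with $v_n = \varphi + \eps_n \sum_i \partial_i\varphi \, w_{e_i}(\cdot/\eps_n)$ in the variational formulation $\int A_{\eps_n}\nabla u_n\cdot\nabla v_n = \int_{\partial\mathcal{D}} g_n v_n$. Since $v_n \to \varphi$ strongly in $L^2(\partial\mathcal{D})$ (the corrector is bounded, or at least its trace is controlled) and $g_n\rightharpoonup g$, the right-hand side converges to $\int_{\partial\mathcal{D}} g\varphi$. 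The left-hand side is handled by the standard div-curl / symmetry argument, using that $A_{\text{per}}^T(e_i+\nabla w_{e_i})$ has divergence zero and weak limit $A_\star e_i$. This yields $\int A_\star\nabla u\cdot\nabla\varphi = \int_{\partial\mathcal{D}} g\varphi$, and the zero boundary mean passes to the limit, so $u = u_\star(g)$. This is the main obstacle, because of the boundary terms and the regularity of the correctors. A cleaner alternative is to invoke $H$-convergence of $A_{\eps}$ to $A_\star$, which is stable under weakly converging data in $(H^1)'$.

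It then remains to compare with $\mathcal{T}_{A_\star} g_n$. Since $\mathcal{T}_{A_\star}$ is compact, $\mathcal{T}_{A_\star} g_n \to \mathcal{T}_{A_\star} g = u_\star(g)|_{\partial\mathcal{D}}$ strongly. Combined with $\mathcal{T}_{\eps_n} g_n = u_n|_{\partial\mathcal{D}} \to u_\star(g)|_{\partial\mathcal{D}}$ strongly, this gives $\|\mathcal{T}_{\eps_n}g_n - \mathcal{T}_{A_\star}g_n\|\to 0$, contradicting the choice of $\delta$. Hence $\Psi_\eps(A_\star) \leq \frac12\|\mathcal{T}_\eps-\mathcal{T}_{A_\star}\|\to 0$, which proves~\eqref{eq:Iepsto0}. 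Compactness is exactly what makes the $L^2(\partial\mathcal{D})$ normalization work here, consistent with the paper's remark about the $H^{-1/2}$ variant.
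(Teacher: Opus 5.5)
Your overall architecture is sound: $0\le I_\eps\le\Psi_\eps(A_\star)=\frac12\|\mathcal{T}_\eps-\mathcal{T}_{A_\star}\|$, a contradiction sequence of unit $g_n\rightharpoonup g$, uniform $H^1$ bounds, and compactness of the trace and of $\mathcal{T}_{A_\star}$. The weak point is the step you yourself call the main obstacle. Neither justification you offer for it goes through as stated.

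\emph{The Tartar sketch.} With $\varphi\in C^\infty(\overline{\mathcal{D}})$ it does not close. The div--curl lemma only gives convergence of $\nabla u_n\cdot A_{\eps_n}(e_i+\nabla w_{e_i})(\cdot/\eps_n)$ in $\mathcal{D}'(\mathcal{D})$. These products are only $L^1$-bounded and may concentrate at $\partial\mathcal{D}$, so you cannot test them against $\partial_i\varphi$, which does not vanish there.

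\emph{The ``cleaner alternative''.} It rests on a false claim: homogenization is \emph{not} stable under merely weakly converging data in $(H^1)'$. Already in dimension one, with coefficient $a(x/\eps)$ and Dirichlet conditions, the data $-(\theta(x)\,b(x/\eps))'$ converge weakly in $H^{-1}$. Yet the solutions converge to the homogenized solution with the limit datum only if $\langle b/a\rangle=\langle b\rangle\langle 1/a\rangle$.

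\emph{How to fix it.} What saves you is compactness. The trace map $H^1(\mathcal{D})\to L^2(\partial\mathcal{D})$ is compact, hence so is its adjoint. So $g_n\rightharpoonup g$ in $L^2(\partial\mathcal{D})$ implies that $v\mapsto\int_{\partial\mathcal{D}}(g_n-g)\,v$ tends to $0$ \emph{strongly} in $(H^1(\mathcal{D}))'$. The $\eps$-uniform a priori estimate then gives $\|u_{\eps_n}(g_n)-u_{\eps_n}(g)\|_{H^1(\mathcal{D})}\to 0$. Hence $u_n\rightharpoonup u_\star(g)$ follows from homogenization with the fixed datum $g$ alone. With this replacement your proof is complete.

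\emph{Comparison with the paper.} The target is the same. As you note, by self-adjointness $\Psi_\eps(A_\star)$ is exactly half the operator norm, so your norm convergence is equivalent to, not stronger than, what is proved there. The paper takes a maximizer $g_{\eps,A_\star}$ with weak limit $g_{A_\star}$ and splits $\Psi_\eps(A_\star)$ into three terms. Fixed-$g$ homogenization handles the middle one. The symmetry $\int_{\partial\mathcal{D}}f\,u_\eps(g)=\int_{\partial\mathcal{D}}g\,u_\eps(f)$ turns the first term into $\int_{\partial\mathcal{D}}(g_{\eps,A_\star}-g_{A_\star})\,u_\eps(g_{\eps,A_\star}+g_{A_\star})$, and the third into its analogue with $u_\star$. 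Each is a weakly null sequence paired with a sequence precompact in $L^2(\partial\mathcal{D})$.

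Because it works with the quadratic form, the paper never needs to identify the limit of $u_\eps$ under varying data. That is precisely the step that costs you extra work, since you work with $\mathcal{T}_{\eps_n}g_n$ itself. Both arguments ultimately rest on the same two ingredients: compactness of the trace and homogenization for a fixed boundary datum.
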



\begin{proof}[Proof of Proposition~\ref{prop:Iepsto0}] 
We show that $\Psi_\eps(A_\star)$ goes to $0$, which immediately induces the limit~\eqref{eq:Iepsto0}.

We start by recalling that, for any $\overline{A} \in \mathcal{S}_{\alpha, \beta}$, the supremum $\Psi_\eps(\overline{A})$ is attained at some $g_{\eps, \overline{A}}^{\sup}$, which is a normalized eigenfunction corresponding to the eigenvalue of largest absolute value of the operator $\mathcal{H}_{\eps, \overline{A}}$ (we have shown this property in Section~\ref{subsection:supproblem}). In what follows, we are going to simply denote $g_{\eps, \overline{A}}$ this eigenfunction, and we are going to use this result for the particular choice $\overline{A} = A_\star$. We are also going to need the following a priori bounds. Let $C_{\text{Tr}}$ denote a constant such that, for any $u \in H^1(\mathcal{D})$,
\begin{equation*}
  \|u\|_{L^2(\partial \mathcal{D})} \leq C_{\text{Tr}} \, \|u\|_{H^1(\mathcal{D})},
\end{equation*}
and let $C_\mathcal{D}$ denote the Poincaré-Wirtinger constant such that, for any $u \in H^1_m(\mathcal{D})$,
\begin{equation*}
  \|u\|_{H^1(\mathcal{D})}^2 \leq C_\mathcal{D} \, \|\nabla u\|_{L^2(\mathcal{D})}^2.
\end{equation*}
The functions $u_\eps(g)$ and $u(\overline{A},g)$ can be bounded as solutions to, respectively, \eqref{eq:diffusive} and~\eqref{eq:diffusiveAbar}. Using the variational formulations of these equations, we indeed see that
\begin{equation} \label{eq:boundedueps}
  \|u_\eps(g)\|_{H^1(\mathcal{D})} \leq \frac{C_\mathcal{D} \, C_{\text{Tr}}}{\alpha} \, \|g\|_{L^2(\partial \mathcal{D})},
\end{equation}
and similarly
\begin{equation} \label{eq:boundedubar}
   \|u(\overline{A}, g)\|_{H^1(\mathcal{D})} \leq \frac{C_\mathcal{D} \, C_{\text{Tr}}}{\alpha} \, \|g\|_{L^2(\partial\mathcal{D})},
\end{equation}
where we have used that both $A_\eps$ and $\overline{A}$ are $\alpha$-coercive.

\medskip

Let us now show that $\Psi_\eps(A_\star)$ goes to $0$ when $\eps \to 0$. For any $\eps >0$, we denote $g_{\eps, A_\star}$ a maximizer of $\Psi_\eps(A_\star)$ in $L^2_m(\partial \mathcal{D})$. The sequence being bounded, it weakly converges up to an extraction, when $\eps$ tends to $0$, to an element $g_{A_\star} \in L^2(\partial\mathcal{D})$ such that 
\begin{equation} \label{eq:fcandidatebounded2}
  \|g_{A_\star}\|_{L^2(\partial \mathcal{D})} \leq \liminf_{\eps > 0} \|g_{\eps, A_\star}\|_{L^2(\partial \mathcal{D})} = 1.
\end{equation}
The weak convergence of $g_{\eps, A_\star}$ to $g_{A_\star}$ in $L^2(\partial\mathcal{D})$ directly implies that $g_{A_\star}$ belongs to $L^2_m(\partial\mathcal{D})$. We have
\begin{equation*}
  \Psi_\eps(A_\star) = \psi_\eps(A_\star, g_{\eps, A_\star}) = \Big| \mathcal{E}(A_\eps, g_{\eps, A_\star}) - \mathcal{E}(A_\star, g_{\eps, A_\star})\Big|,
\end{equation*}
and hence
\begin{multline} \label{eq:psiepsilonAstar}
  \Psi_\eps(A_\star) \leq \underbrace{\Big| \mathcal{E}(A_\eps, g_{\eps, A_\star}) - \mathcal{E}(A_\eps, g_{A_\star}) \Big|}_{(A)} + \underbrace{\Big|\mathcal{E}(A_\eps, g_{A_\star})- \mathcal{E}(A_\star, g_{A_\star})\Big|}_{(B)} \\ + \underbrace{\Big|\mathcal{E}(A_\star, g_{A_\star}) - \mathcal{E}(A_\star, g_{\eps, A_\star})\Big|}_{(C)}.
\end{multline}
We are going to successively study each of the above three terms. We start by showing the convergence of the term $(A)$. We first recall that $u_\eps(g)$ is linear with respect to $g\in L^2_m(\partial \mathcal{D})$, and that, for any $f$ and $g$ in $L^2_m(\partial \mathcal{D})$, $\dis \int_{\partial \mathcal{D}} g \, u_\eps(f) = \int_{\mathcal{D}} f \, u_\eps(g)$ (as a consequence of the symmetry of $A_\eps$). We will later on use similar properties where the field $u_\eps$ is replaced by $u_\star$. Using this, we get 
\begin{align}
  -2 \big( \mathcal{E}(A_\eps, g_{\eps, A_\star}) - \mathcal{E}(A_\eps, g_{A_\star}) \big)
  &= \int_{\partial \mathcal{D}} g_{\eps, A_\star} \, u_\eps(g_{\eps, A_\star}) - \int_{\partial \mathcal{D}} g_{A_\star} \, u_\eps(g_{A_\star})
  \nonumber \\
  &= \int_{\partial \mathcal{D}} \! \left(g_{\eps, A_\star} - g_{A_\star}\right) u_\eps(g_{\eps, A_\star}) + \int_{\partial \mathcal{D}} \! g_{A_\star} \left( u_\eps(g_{\eps, A_\star}) - u_\eps(g_{A_\star}) \right)
  \nonumber \\
  &= \int_{\partial \mathcal{D}} \left(g_{\eps, A_\star} - g_{A_\star}\right) u_\eps(g_{\eps, A_\star}) + \int_{\partial \mathcal{D}} \left( g_{\eps, A_\star}-g_{A_\star} \right) u_\eps(g_{A_\star})
  \nonumber \\
  &= \int_{\partial \mathcal{D}} \left(g_{\eps, A_\star} - g_{A_\star}\right) u_\eps(g_{\eps, A_\star} + g_{A_\star}).
  \label{eq:diffenergyepsstar}
\end{align}
The latter integrand is the product of a sequence weakly convergent to $0$ in $L^2({\partial \mathcal{D}})$ and the sequence $\dis \left(u_\eps(g_{\eps, A_\star} + g_{A_\star}) \right)_{\eps>0}$ which is strongly convergent in $L^2(\partial\mathcal{D})$. Indeed, it is bounded in $H^1(\mathcal{D})$ (as a consequence of~\eqref{eq:boundedueps} and~\eqref{eq:fcandidatebounded2}), and thus strongly convergent in $L^2(\partial \mathcal{D})$ (since the injection $H^1(\mathcal{D})$ in $L^2(\partial \mathcal{D})$ is compact). The right-hand side of~\eqref{eq:diffenergyepsstar}, and hence the term $(A)$, thus converges to $0$.

For the term $(B)$, we recall that homogenization theory implies (under the periodicity assumption~\eqref{eq:periodic}) that, for any $g\in L^2_m(\partial \mathcal{D})$, the solution $u_\eps(g)$ to~\eqref{eq:diffusive} converges to the solution $u_\star(g)$ to~\eqref{eq:eqstar}, weakly in $H^1(\mathcal{D})$. Thus, $u_\eps(g) \in L^2(\partial \mathcal{D})$ converges to $u_\star(g) \in L^2(\partial \mathcal{D})$ strongly in $L^2(\partial \mathcal{D})$, and the term $(B)$ therefore goes to $0$ as $\eps$ goes to $0$.

We eventually turn to the term $(C)$, for which we write, in a similar manner as that for the term $(A)$ (see~\eqref{eq:diffenergyepsstar}), 
\begin{equation} \label{eq:diffenergyepsstar2}
  -2 \big( \mathcal{E}(A_\star, g_{A_\star}) - \mathcal{E}(A_\star, g_{\eps, A_\star}) \big) = \int_{\partial \mathcal{D}} \left(g_{A_\star}-g_{\eps, A_\star}\right) u_\star(g_{A_\star}+g_{\eps, A_\star}).
\end{equation}
As in~\eqref{eq:diffenergyepsstar}, the latter integrand is the product of a sequence weakly convergent to $0$ in $L^2({\partial \mathcal{D}})$ with the sequence $(u_\star(g_{A_\star}+g_{\eps, A_\star}))_{\eps>0}$ which is strongly convergent in $L^2(\partial\mathcal{D})$, since it is bounded in $H^1(\mathcal{D})$ (as a consequence of~\eqref{eq:boundedubar} and~\eqref{eq:fcandidatebounded2}). The right-hand side of~\eqref{eq:diffenergyepsstar2}, and hence the term $(C)$, thus converges to $0$.

We thus have shown that the three terms in the right hand side of~\eqref{eq:psiepsilonAstar} tend to $0$. This directly implies that $\Psi_\eps(A_\star)$, and therefore $I_\eps$, converges to $0$ when $\eps$ tends to $0$. This concludes the proof of Proposition~\ref{prop:Iepsto0}.
\end{proof}

We now show the existence of a (not necessarily unique) minimizer to~\eqref{eq:infsup}.

\begin{proposition}[Existence of minimizers] \label{prop:exist_min}
  Consider the periodic setting~\eqref{eq:periodic} with the assumption~\eqref{eq:borne_Aper}. There exists a minimizer to~\eqref{eq:infsup}, which we denote $\overline{A}_\eps^{\#} \in \mathcal{S}_{\alpha, \beta}$:
  \begin{equation} \label{eq:quasiminimizerdef}
    I_\eps = \inf_{\overline{A} \in \mathcal{S}_{\alpha, \beta}} \Psi_\eps(\overline{A}) = \Psi_\eps(\overline{A}_\eps^{\#}). 
  \end{equation}
\end{proposition}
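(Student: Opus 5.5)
The proof will be a direct-method argument in the finite-dimensional parameter space. The set $\mathcal{S}_{\alpha, \beta}$ is a closed and bounded subset of $\mathbb{R}^{d\times d}_{\text{sym}}$: the conditions $\alpha|\xi|^2 \leq \xi^T M \xi$ and $|\xi^T M \eta| \leq \beta|\xi||\eta|$ are intersections of closed conditions, and the second bounds every entry of $M$ by $\beta$. Hence $\mathcal{S}_{\alpha, \beta}$ is compact. It therefore suffices to show that $\Psi_\eps : \mathcal{S}_{\alpha, \beta} \to \mathbb{R}$ is continuous, or at least lower semicontinuous. The existence of a minimizer $\overline{A}_\eps^{\#}$ then follows from Weierstrass' theorem. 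Note that $\eps$ is fixed throughout, so neither periodicity nor homogenization is actually needed.

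For continuity, I would use the operator representation of Section~\ref{subsection:supproblem}. There, $\Psi_\eps(\overline{A}) = |\lambda^{\sup}_{\eps,\overline{A}}|$ is the operator norm of the self-adjoint operator $\mathcal{H}_{\eps,\overline{A}} = \frac12(\mathcal{T}_\eps - \mathcal{T}_{\overline{A}})$ on $L^2_m(\partial\mathcal{D})$, since the sup of $|\langle g, Hg\rangle|$ over unit $g$ equals $\|H\|$ for self-adjoint $H$. This gives
\begin{equation*}
  |\Psi_\eps(\overline{A}_1) - \Psi_\eps(\overline{A}_2)| \leq \tfrac12 \|\mathcal{T}_{\overline{A}_1} - \mathcal{T}_{\overline{A}_2}\|_{\mathcal{L}(L^2_m(\partial\mathcal{D}))}.
\end{equation*}
Alternatively, one can avoid the norm identity and write $|\psi_\eps(\overline{A}_1,g) - \psi_\eps(\overline{A}_2,g)| \leq \frac12 \left|\int_{\partial\mathcal{D}} g\,(\overline{u}(\overline{A}_1,g) - \overline{u}(\overline{A}_2,g))\right|$ uniformly over unit $g$. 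A difference of two suprema is then bounded by the sup of the differences.

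The key estimate, and the only real step, is a Lipschitz bound on $\overline{A} \mapsto \mathcal{T}_{\overline{A}}$. Set $\overline{u}_k = u(\overline{A}_k, g)$ for $k=1,2$. Subtracting the two variational formulations gives, for all $v \in H^1_m(\mathcal{D})$,
\begin{equation*}
  \int_{\mathcal{D}} \nabla v \cdot \overline{A}_1 \nabla(\overline{u}_1 - \overline{u}_2) = \int_{\mathcal{D}} \nabla v \cdot (\overline{A}_2 - \overline{A}_1)\nabla \overline{u}_2 .
\end{equation*}
Take $v = \overline{u}_1 - \overline{u}_2$ and use the $\alpha$-coercivity of $\overline{A}_1$. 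Then apply the a priori bound~\eqref{eq:boundedubar} to $\overline{u}_2$. This yields $\|\nabla(\overline{u}_1 - \overline{u}_2)\|_{L^2} \leq \alpha^{-1}|\overline{A}_1 - \overline{A}_2| \, \frac{C_\mathcal{D} C_{\text{Tr}}}{\alpha}\|g\|_{L^2(\partial\mathcal{D})}$. Poincaré–Wirtinger and the trace inequality then convert this into a bound in $L^2(\partial\mathcal{D})$ that is linear in $|\overline{A}_1 - \overline{A}_2|$ and uniform over unit $g$. Hence $\Psi_\eps$ is Lipschitz on $\mathcal{S}_{\alpha, \beta}$.

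I expect no serious obstacle. The only points requiring care are the uniformity in $g$, which is handled by the a priori bound~\eqref{eq:boundedubar} holding uniformly on $\mathcal{S}_{\alpha, \beta}$, and checking that $\mathcal{S}_{\alpha, \beta}$ is closed, which is immediate from the non-strict inequalities in its definition~\eqref{eq:Salphabeta}.
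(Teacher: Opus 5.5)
Your proof is correct and follows the same route as the paper: $\mathcal{S}_{\alpha,\beta}$ is compact and $\Psi_\eps$ is (semi)continuous on it, so a minimizer exists by Weierstrass' theorem. The only difference is one of strength. The paper just observes that each map $\overline{A}\mapsto\psi_\eps(\overline{A},g)$ is continuous, hence the supremum $\Psi_\eps$ is lower semicontinuous, while your uniform-in-$g$ Lipschitz bound on $\overline{A}\mapsto\mathcal{T}_{\overline{A}}$ gives full Lipschitz continuity of $\Psi_\eps$ and spells out the continuity step the paper calls straightforward.
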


\begin{proof}[Proof of Proposition~\ref{prop:exist_min}]
  We recall (see~\eqref{eq:supproblem}) that $\Psi_\eps(\overline{A})$ is defined as the supremum over the functions $g \in L^2_m(\partial \mathcal{D})$ of unit norm of the quantity $\psi_\eps(\overline{A},g)$. It is straighforward to see that, for any $g \in L^2_m(\partial \mathcal{D})$, the function $\overline{A} \in \mathcal{S}_{\alpha, \beta} \mapsto \psi_\eps(\overline{A},g)$ is continuous. Consequently, the function $\overline{A} \mapsto \Psi_\eps(\overline{A})$ is lower semi-continuous. Since $\mathcal{S}_{\alpha, \beta}$ is compact, there exists a minimizer to $\Psi_\eps$ in $\mathcal{S}_{\alpha, \beta}$.
\end{proof}

We now state our main consistency result.

\begin{proposition}[Convergence of minimizers] \label{prop:quasiminimizers}
  Consider the periodic setting~\eqref{eq:periodic} with the assumption~\eqref{eq:borne_Aper}. Any sequence of minimizers $(\overline{A}_\eps^{\#})_{\eps>0}$ of~\eqref{eq:infsup} converges to the homogenized coefficient $A_\star$:
  \begin{equation*} 
    \lim_{\eps \to 0} \overline{A}_\eps^{\#} = A_\star.
  \end{equation*}
\end{proposition}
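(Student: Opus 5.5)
Since $\mathcal{S}_{\alpha,\beta}$ is compact, it suffices to show that every subsequential limit $\overline{A}^{\#}$ of $(\overline{A}_\eps^{\#})$ equals $A_\star$. A compact sequence all of whose cluster points coincide converges. So fix a subsequence along which $\overline{A}_\eps^{\#} \to \overline{A}^{\#} \in \mathcal{S}_{\alpha,\beta}$.

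\textbf{Step 1: identify the limiting energy.} Fix $g \in L^2_m(\partial\mathcal{D})$ with unit norm. By definition of $\Psi_\eps$, by Proposition~\ref{prop:exist_min} and by Proposition~\ref{prop:Iepsto0},
\begin{equation*}
|\mathcal{E}(A_\eps,g) - \mathcal{E}(\overline{A}_\eps^{\#},g)| \le \Psi_\eps(\overline{A}_\eps^{\#}) = I_\eps \to 0 .
\end{equation*}
Two limits then hold along the subsequence.
\begin{itemize}
\item By homogenization, $u_\eps(g) \rightharpoonup u_\star(g)$ weakly in $H^1$. By compactness of the trace, $\mathcal{E}(A_\eps,g) \to \mathcal{E}(A_\star,g)$, exactly as for term $(B)$ in the proof of Proposition~\ref{prop:Iepsto0}.
\item The map $\overline{A} \mapsto u(\overline{A},g) \in H^1_m(\mathcal{D})$ is continuous on $\mathcal{S}_{\alpha,\beta}$. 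Indeed, the difference $u(\overline{A}_1,g) - u(\overline{A}_2,g)$ solves a Neumann problem with flux $(\overline{A}_2 - \overline{A}_1)\nabla u(\overline{A}_2,g)$. The uniform bound~\eqref{eq:boundedubar} then gives $\|u(\overline{A}_1,g) - u(\overline{A}_2,g)\|_{H^1} \le C\,|\overline{A}_1 - \overline{A}_2|\,\|g\|$. Hence $\mathcal{E}(\overline{A}_\eps^{\#},g) \to \mathcal{E}(\overline{A}^{\#},g)$.
\end{itemize}
Therefore $\int_{\partial\mathcal{D}} g\, u(\overline{A}^{\#},g) = \int_{\partial\mathcal{D}} g\, u(A_\star,g)$ for every $g$. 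By polarization and the symmetry of $\mathcal{T}_{\overline{A}}$ shown in Section~\ref{subsection:supproblem}, the Neumann-to-Dirichlet operators coincide: $\mathcal{T}_{\overline{A}^{\#}} = \mathcal{T}_{A_\star}$.

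\textbf{Step 2: recover the matrix from the operator.} Writing $B = \overline{A}^{\#}$, we must show that $\mathcal{T}_B = \mathcal{T}_{A_\star}$ forces $B = A_\star$ for constant symmetric positive definite matrices. This is the main obstacle: the analogue of Lemma~\ref{lemma:lemma1} announced in the outline.

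I would test against the fluxes $g_p = (Mp)\cdot n$, $p \in \mathbb{R}^d$, for a constant matrix $M$. These have zero mean by the divergence theorem and lie in $L^2$ for a Lipschitz domain. For $M = A_\star$, the solution $u(A_\star,g_p)$ is the affine function $p\cdot x$ minus its boundary average. Computing the energy gives
\begin{equation*}
\int_{\partial\mathcal{D}} g_p\, u(A_\star,g_q) = |\mathcal{D}|\, p^T A_\star q .
\end{equation*}
For $B$, the variational characterization gives
\begin{equation*}
\int_{\partial\mathcal{D}} g\,u(B,g) = \max_{v \in H^1_m}\Big( 2\int_{\partial\mathcal{D}} g v - \int_{\mathcal{D}} \nabla v^T B \nabla v \Big).
\end{equation*}
Testing with the affine function $v = x\cdot A_\star^{-1}\ldots$ requires some care. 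A cleaner route is to use the dual (complementary energy) principle:
\begin{equation*}
\int_{\partial\mathcal{D}} g\,u(B,g) = \min_{\sigma:\ \mathrm{div}\,\sigma=0,\ \sigma\cdot n = g} \int_{\mathcal{D}} \sigma^T B^{-1} \sigma .
\end{equation*}
Since $\sigma = A_\star p$ is admissible for $g_p$ with $M = A_\star$, we get $|\mathcal{D}|\, p^T A_\star p \le |\mathcal{D}|\, p^T A_\star B^{-1} A_\star p$. Symmetrically, the primal principle with the affine test function gives
\begin{equation*}
|\mathcal{D}|\, p^T A_\star p \ge |\mathcal{D}|\,(2\,p^T A_\star p - p^T B p).
\end{equation*}
These two inequalities alone do not close the argument. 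The sharper step is to also use $M = B$, where $u(B,g)$ is affine. Equality of the operators then forces the affine function to be optimal for $A_\star$ as well, by uniqueness of minimizers in the variational principle. It then solves $-\mathrm{div}(A_\star\nabla\cdot) = 0$ with flux $(A_\star p)\cdot n = (Bp)\cdot n$ on $\partial\mathcal{D}$. Hence $((A_\star - B)p)\cdot n = 0$ almost everywhere on $\partial\mathcal{D}$ for every $p$. Since a bounded domain has outward normals spanning $\mathbb{R}^d$ on a set of positive measure, this gives $A_\star = B$.

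This uniqueness step is where I expect the care to be needed: the choice of test fluxes and the use of uniqueness for the $A_\star$-problem. Once it is settled, the compactness argument concludes the proof.
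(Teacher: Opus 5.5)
Your Step~1 is the paper's argument: convergence of the energies via homogenization and trace compactness, continuity of $\overline{A}\mapsto u(\overline{A},g)$, then polarization to get $\mathcal{T}_{\overline{A}^{\#}}=\mathcal{T}_{A_\star}$. Your Step~2 is correct but takes a different route.

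The paper proves identifiability through Lemma~\ref{lemma:lemma1}. An iterative linear-independence argument produces $d(d+1)/2$ pairs $(g_{\star,k},\varphi_{\star,k})$, with $\varphi_{\star,k}$ affine, whose matrix $Z_\star$ is invertible. Integrations by parts against these affine functions then give $Z_\star(\underline{A_\star}-\underline{\overline{A}^{\#}})=0$. You instead use the explicit fluxes $g=(Bp)\cdot n$, with $B=\overline{A}^{\#}$, for which $u(B,g)$ is affine. You transfer this to $u_\star(g)$ and read off $((A_\star-B)p)\cdot n=0$ on $\partial\mathcal{D}$. This is shorter and fully explicit, and its boundary data are of the same type $v\cdot n$ as the $\widetilde{\phi}_{ij}$ of the numerical section. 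The paper's lemma instead frames identifiability as $d(d+1)/2$ measurement pairs, which matches the count $P=d(d+1)/2$ motivating the method.

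Two justifications should be tightened.
\begin{itemize}
\item The transfer step is uniqueness for the \emph{Dirichlet} problem. $u_\star(g)$ and the affine function have the same trace on $\partial\mathcal{D}$ by operator equality, and both are $A_\star$-harmonic, so their difference lies in $H^1_0(\mathcal{D})$ and vanishes. It is not optimality in the Neumann energy principle. For the flux $(Bp)\cdot n$, optimality of the affine function there is equivalent to $p^TBp=p^TA_\star p$, which is what you are trying to prove.
\item The spanning of the normals should be justified, most simply by the divergence theorem. Testing $((A_\star-B)p)\cdot n=0$ against $q\cdot x$ gives $|\mathcal{D}|\,q^T(A_\star-B)p=0$ for all $p,q$.
\end{itemize}

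Also, the two inequalities you set aside already close the argument. The primal bound gives $B\succeq A_\star$. The dual bound $A_\star\preceq A_\star B^{-1}A_\star$ is, after congruence by $A_\star^{-1}$, equivalent to $A_\star^{-1}\preceq B^{-1}$, i.e.\ $B\preceq A_\star$ by antitonicity of inversion. Alternatively, apply the primal principle for $A_\star$ with the flux $(Bp)\cdot n$, which gives $p^TA_\star p\geq p^TBp$ directly. This Voigt--Reuss type argument (affine trial fields, constant trial fluxes) needs only the energy identity from your Step~1, before polarization. It requires neither Dirichlet uniqueness nor any geometric fact about normals, and is probably the cleanest proof of the identification step.
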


\begin{remark} \label{rem:extension}
Note that the periodic assumption~\eqref{eq:periodic} is not necessary to prove Proposition~\ref{prop:quasiminimizers}. We may replace it by the assumption that the sequence of matrices $\{ A_\eps \}_{\eps > 0}$ converges, in the sense of homogenization, to a \textit{constant} and \textit{symmetric} homogenized matrix $A_\star$. This is the reason why we numerically investigate, in addition to periodic test cases, a random stationary setting (see Section~\ref{sec:random_case}), which also leads to a \textit{constant} and \textit{symmetric} homogenized matrix $A_\star$.

More general cases could be considered. The approach could for instance be extended to settings where the homogenized matrix is not constant, but is rather a slowly-varying function of $x \in \mathcal{D}$. 
\end{remark}

In order to prove Proposition~\ref{prop:quasiminimizers}, we need the following lemma.

\begin{lemma} \label{lemma:lemma1}
  There exist $\frac{d(d+1)}{2}$ functions $g_{\star, k} \in L^2_m(\partial \mathcal{D})$ and $\frac{d(d+1)}{2}$ functions $\varphi_{\star, k}$ defined on $\mathcal{D}$ and affine on $\mathcal{D}$ such that the matrix $Z_\star \in \mathbb{R}^{\frac{d(d+1)}{2} \times \frac{d(d+1)}{2}}$ defined, for any $1 \leq k \leq \frac{d(d+1)}{2}$ and any $1 \leq i \leq j \leq d$, by
  \begin{equation} \label{eq:matrixZstar}
    \left\{
    \begin{aligned}
      [Z_\star]_{k,(i,i)} &= \int_{\partial \mathcal{D}} u_\star(g_{\star,k}) \, \partial_i \varphi_{\star,k} \, n_i,\\
      [Z_\star]_{k,(i,j)} &= \int_{\partial \mathcal{D}} u_\star(g_{\star,k}) \, \big(\partial_i \varphi_{\star,k} \, n_j + \partial_j \varphi_{\star,k} \, n_i\big) \quad \text{[if $j > i$]},
    \end{aligned}
    \right.
  \end{equation}
is invertible.
\end{lemma}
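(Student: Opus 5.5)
The plan is to make everything explicit by choosing boundary data for which the homogenized solution $u_\star(g_{\star,k})$ is itself affine. Then the entries of $Z_\star$ can be computed in closed form, and invertibility reduces to a linear-algebra fact about rank-one symmetric matrices. I assume, as is implicit throughout, that $\mathcal{D}$ is a bounded Lipschitz domain, so that the outward normal $n$ is in $L^\infty(\partial\mathcal{D})$ and the divergence theorem holds.

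\textbf{Step 1: rewrite the entries as volume integrals.} Let $\varphi$ be affine on $\mathcal{D}$, so that $\partial_i\varphi$ is a constant, and let $u\in H^1(\mathcal{D})$. The divergence theorem gives
\begin{equation*}
\int_{\partial\mathcal{D}} u\,\partial_i\varphi\,n_j=\partial_i\varphi\int_{\mathcal{D}}\partial_j u .
\end{equation*}
Set $p_k=\nabla\varphi_{\star,k}\in\mathbb{R}^d$ and $q_k=\int_{\mathcal{D}}\nabla u_\star(g_{\star,k})\in\mathbb{R}^d$. Then
\begin{equation*}
[Z_\star]_{k,(i,i)}=p_{k,i}\,q_{k,i},\qquad [Z_\star]_{k,(i,j)}=p_{k,i}\,q_{k,j}+p_{k,j}\,q_{k,i}\quad (j>i).
\end{equation*}
So row $k$ of $Z_\star$ is the vector of upper-triangular coordinates of the symmetric matrix $S_k=\frac12(p_k q_k^T+q_k p_k^T)$, with the off-diagonal coordinates multiplied by $2$. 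Rescaling columns does not affect invertibility. Hence $Z_\star$ is invertible if and only if $S_1,\dots,S_{d(d+1)/2}$ form a basis of $\mathbb{R}^{d\times d}_{\text{sym}}$.

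\textbf{Step 2: choose the data.} For a vector $\xi\in\mathbb{R}^d$, set $g=(A_\star\xi)\cdot n$ on $\partial\mathcal{D}$.
\begin{itemize}
\item This $g$ lies in $L^2(\partial\mathcal{D})$ because $n$ is bounded.
\item It has zero mean: the divergence theorem gives $\int_{\partial\mathcal{D}}(A_\star\xi)\cdot n=\int_{\mathcal{D}}\text{div}(A_\star\xi)=0$.
\item Its solution is explicit: the function $x\mapsto\xi\cdot x-c_\xi$, with $c_\xi$ the boundary mean of $\xi\cdot x$, solves~\eqref{eq:eqstar}, since $A_\star$ is constant. By uniqueness in $H^1_m(\mathcal{D})$, $u_\star(g)=\xi\cdot x-c_\xi$.
\item Consequently $\int_{\mathcal{D}}\nabla u_\star(g)=|\mathcal{D}|\,\xi$.
\end{itemize}
Now take the $d(d+1)/2$ vectors $\xi_k$ in the family
\begin{equation*}
\{e_i\}_{1\le i\le d}\cup\{e_i+e_j\}_{1\le i<j\le d},
\end{equation*}
and set $g_{\star,k}=(A_\star\xi_k)\cdot n$ and $\varphi_{\star,k}(x)=\xi_k\cdot x$. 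Then $p_k=\xi_k$ and $q_k=|\mathcal{D}|\,\xi_k$, so that $S_k=|\mathcal{D}|\,\xi_k\xi_k^T$.

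\textbf{Step 3: linear independence.} The $d(d+1)/2$ matrices $e_ie_i^T$ and $(e_i+e_j)(e_i+e_j)^T=e_ie_i^T+e_je_j^T+(e_ie_j^T+e_je_i^T)$ span $\mathbb{R}^{d\times d}_{\text{sym}}$. Indeed, each elementary symmetric matrix $e_ie_j^T+e_je_i^T$ is obtained by subtracting $e_ie_i^T+e_je_j^T$ from $(e_i+e_j)(e_i+e_j)^T$. Since $\dim\mathbb{R}^{d\times d}_{\text{sym}}=d(d+1)/2$, these matrices form a basis. Equivalently, in the ordering of columns (diagonal pairs first, then off-diagonal pairs), $Z_\star$ is $|\mathcal{D}|$ times a block lower-triangular matrix with identity diagonal blocks, up to the factor-$2$ column scaling. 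Its determinant is therefore nonzero.

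The only delicate point is Step 1. One has to check that the normalization constant in $u_\star$ does not contribute, which holds because $\int_{\partial\mathcal{D}}n_j=0$ for any Lipschitz domain. One also needs enough boundary regularity for the integration by parts. Once this is settled, the rest is bookkeeping.
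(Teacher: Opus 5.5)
Your proof is correct, and it takes a genuinely different route from the paper's. The paper argues non-constructively. It builds the rows greedily, and at each step shows by contradiction that the vectors $\underline{z}(g,\varphi)$, over all admissible $(g,\varphi)$, cannot lie in a proper subspace. If a nonzero symmetric $A_0$ annihilated all of them, then $\int_{\mathcal{D}} \nabla\varphi \cdot A_0 \nabla u_\star(g) = \int_{\partial\mathcal{D}} g\,\psi_\varphi$ would vanish for every $g$, where $\psi_\varphi$ is the affine function with gradient $A_\star^{-1}A_0\nabla\varphi$. Taking $g=\psi_\varphi$ then gives the contradiction.

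You instead use the divergence theorem to reduce each row to the symmetrized product of $p_k=\nabla\varphi_{\star,k}$ and the mean gradient $q_k=\int_{\mathcal{D}}\nabla u_\star(g_{\star,k})$. You then choose data $g=(A_\star\xi)\cdot n$ whose homogenized solution is affine. This turns the lemma into the elementary fact that the matrices $\xi\xi^T$, $\xi\in\{e_i\}\cup\{e_i+e_j\}$, form a basis of $\mathbb{R}^{d\times d}_{\text{sym}}$.

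Both arguments rest on the same fact, namely that affine functions are $A_\star$-harmonic because $A_\star$ is constant. In your notation, the paper's pairing $\underline{A}\cdot\underline{z}(g,\varphi)$ is exactly $p^T A q$, and its contradiction step is a duality argument showing that this cannot vanish identically for $A_0\neq 0$.

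What your route buys is explicitness. The data $g_{\star,k}$, the functions $\varphi_{\star,k}$ and the matrix $Z_\star$ are all explicit, with $|\det Z_\star| = 2^{d(d-1)/2}|\mathcal{D}|^{d(d+1)/2}$, independent of $A_\star$ and of the shape of $\mathcal{D}$. That gives a quantitative bound on $Z_\star^{-1}$, which could feed into convergence rates. Your family is also essentially the boundary data $\widetilde{\phi}_{ij}=e_{ij}\cdot n$ that the paper tests numerically for that very reason.

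What the paper's route buys is the stronger structural statement that the vectors $\underline{z}(g,\varphi)$ span all of $\mathbb{R}^{d(d+1)/2}$. Hence a greedy selection never gets stuck and no clever guess of data is needed. It also follows the template of the earlier work it adapts. In both constructions the data depend on $A_\star$, which is harmless since the lemma is only used as an analysis tool in the proof of the consistency result.
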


\begin{proof}[Proof of Lemma~\ref{lemma:lemma1}.]
We start by fixing some notation. We denote $\mathcal{C}_{\text{aff}}(\overline{\mathcal{D}})$ the set of functions defined on $\overline{\mathcal{D}}$ and affine on $\overline{\mathcal{D}}$. A symmetric matrix $A \in \mathbb{R}^{d \times d}_{\text{sym}}$ can be identified to the vector $\underline{A} \in \mathbb{R}^{\frac{d(d+1)}{2}}$ defined by $\underline{A}_{(i,j)} = A_{ij}$ for all $1\leq i\leq j \leq d$. For any $g\in L^2_m(\partial \mathcal{D})$ and any function $\varphi\in \mathcal{C}_{\text{aff}}(\overline{\mathcal{D}})$, we denote by $\underline{z}(g,\varphi)$ the vector defined by 
\begin{equation*}
  \forall 1\leq i\leq j \leq d, \ \left\{
  \begin{aligned}
    [\underline{z}(g,\varphi)]_{(i,i)} &= \int_{\partial \mathcal{D}} u_\star(g) \, \partial_i \varphi \, n_i,\\
    [\underline{z}(g,\varphi)]_{(i,j)} &= \int_{\partial \mathcal{D}} u_\star(g) \, \big(\partial_i \varphi \, n_j + \partial_j \varphi \, n_i\big) \quad \text{[if $j > i$]}.
  \end{aligned}
  \right.
\end{equation*}

\medskip

For a matrix $Z \in \mathbb{R}^{\frac{d(d+1)}{2} \times \frac{d(d+1)}{2}}$ to be invertible, it is sufficient that its rows are linearly independent as vectors in $\mathbb{R}^{\frac{d(d+1)}{2}}$. Thus, we construct iteratively the rows of the desired matrix $Z_\star$ defined by~\eqref{eq:matrixZstar}. To do so, at each step $k$, we identify a function $g_{\star, k+1} \in L^2_m(\partial \mathcal{D})$ and a function $\varphi_{\star, k+1} \in \mathcal{C}_{\text{aff}}(\overline{\mathcal{D}})$ such that the vector $\underline{z_{k+1}} = \underline{z}(g_{\star, k+1}, \varphi_{\star, k+1})$ is linearly independent of the vectors $(\underline{z_j})_{1\leq j\leq k}$ constructed previously.

\medskip

\noindent
\textbf{Initialization}. We start by identifying $(g_{\star, 1}, \varphi_{\star, 1}) \in L^2_m(\partial\mathcal{D}) \times \mathcal{C}_{\text{aff}}(\overline{\mathcal{D}})$ such that $\underline{z_1} = \underline{z}(g_{\star, 1}, \varphi_{\star, 1})$ does not vanish. We select $\varphi_{\star, 1} \in \mathcal{C}_{\text{aff}}(\overline{\mathcal{D}}) \cap L^2_m(\partial\mathcal{D})$ such that $\|\varphi_{\star, 1}\|_{L^2(\partial \mathcal{D})} \neq 0$ and set $g_{\star, 1} = \varphi_{\star, 1}$. Then, denoting $\underline{A_\star} = (A_{\star,ij})_{1\leq i\leq j \leq d} \in \mathbb{R}^{\frac{d(d+1)}{2}}$, we see that
\begin{align*}
  \underline{A_\star} \cdot \underline{z_1} &= \sum_{1\leq i,j\leq d} A_{\star,ij} \int_{\partial \mathcal{D}} u_\star(g_{\star, 1}) \, \partial_i \varphi_{\star, 1} \, n_j\\
  &= \int_{\partial \mathcal{D}} u_\star(g_{\star, 1}) \left(\sum_{1\leq i,j\leq d} A_{\star,ij} \, \partial_i \varphi_{\star, 1} \, n_j\right)\\
  &= \int_{\partial \mathcal{D}} u_\star(g_{\star, 1}) \left(A_\star \nabla \varphi_{\star, 1}\right) \cdot n\\
  &= \int_{\mathcal{D}} \nabla u_\star(g_{\star, 1}) \cdot \left(A_\star \nabla \varphi_{\star, 1}\right) + \underbrace{\int_{\mathcal{D}} u_\star(g_{\star,1}) \, \text{div}\left(A_\star \nabla \varphi_{\star,1}\right)}_{=0}\\
  &= \int_{\partial \mathcal{D}} g_{\star, 1} \, \varphi_{\star, 1}\\
  &= \int_{\partial \mathcal{D}} g_{\star, 1}^2 > 0,
\end{align*}
where we have used that $A_\star$ is symmetric in the first and fifth lines, an integration by part in the fourth line (the second integral on $\mathcal{D}$ vanishes since $\varphi_{\star, 1}$ is affine and $A_\star$ is constant) and the variational formulation of~\eqref{eq:eqstar} in the fifth line. We therefore obviously deduce that $\underline{z_1} \neq 0$.

\medskip

\noindent
\textbf{Iteration $k$}. At step $k < \frac{d(d+1)}{2}$, we assume to have constructed $k$ functions $g_{\star, 1}, \dots, g_{\star, k} \in L^2_m(\partial \mathcal{D})$ and $k$ functions $\varphi_{\star, 1}, \dots, \varphi_{\star, k} \in \mathcal{C}_{\text{aff}}(\overline{\mathcal{D}})$ such that the associated vectors $(\underline{z_j} = \underline{z}(g_{\star, j}, \varphi_{\star, j}))_{1\leq j\leq k}$ are linearly independent. We seek a function $g_{\star, k+1}\in L^2_m(\partial \mathcal{D})$ and a function $\varphi_{\star, k+1} \in \mathcal{C}_{\text{aff}}(\overline{\mathcal{D}})$ such that $\underline{z_{k+1}}$ is linearly independent of the $(\underline{z_j})_{1\leq j\leq k}$.

By contradiction, let us assume that, for every $(g, \varphi)\in L^2_m(\partial\mathcal{D})\times \mathcal{C}_{\text{aff}}(\overline{\mathcal{D}})$, there exists $(\lambda_1, \dots, \lambda_k)\in \mathbb{R}^k$ such that
\begin{equation*}
  \underline{z}(g,\varphi) = \sum_{1\leq j \leq k} \lambda_j \, \underline{z_j}.
\end{equation*}
Then, for every $\underline{A} \in \mathbb{R}^{\frac{d(d+1)}{2}}$, we have
\begin{equation*}
  \underline{A} \cdot \underline{z}(g,\varphi) = \sum_{1\leq j \leq k} \lambda_j \, \underline{A} \cdot \underline{z_j}.
\end{equation*}
Since $k<\frac{d(d+1)}{2}$, we can find an element $\underline{A_0} \in \mathbb{R}^{\frac{d(d+1)}{2}}$ such that $\underline{A_0}\neq 0$ and $\underline{A_0} \cdot \underline{z_j} = 0$ for any $1 \leq j \leq k$. This element $\underline{A_0}$ is independent of $(g, \varphi)$ and is such that
\begin{equation*}
  0 = \underline{A_0} \cdot \underline{z}(g, \varphi) = \int_{\mathcal{D}} \nabla u_\star(g) \cdot \left(A_0 \nabla \varphi \right) = \int_{\mathcal{D}} \nabla \varphi \cdot \left(A_0 \nabla u_\star(g) \right),
\end{equation*}
where $A_0 \in \mathbb{R}^{d\times d}$ denotes the symmetric matrix defined by $A_{0,ij} = (\underline{A_0})_{(i,j)}$ for all $1 \leq i \leq j \leq d$.

Introducing the matrix $P$ such that $P = A_0 \, A_\star^{-1}$, we thus get 
\begin{equation*}
  0 = \int_{\mathcal{D}} \left(P^T\nabla \varphi\right) \cdot\left(A_\star \nabla u_\star(g)\right).
\end{equation*}
Since $\text{curl}\left(P^T\nabla \varphi\right) = 0$ (recall that $\nabla \varphi$ is constant, since $\varphi$ is affine), we can find a function $\psi_\varphi$ defined on $\mathcal{D}$ such that $\nabla \psi_\varphi = P^T\nabla \varphi$. Up to replacing $\psi_\varphi$ by $\dis \psi_\varphi -\frac{1}{|\partial \mathcal{D}|}\int_{\partial \mathcal{D}}\psi_\varphi$, we can assume that $\psi_\varphi$ belongs to $L^2_m(\partial \mathcal{D})$. We thus obtain, for all $(g, \varphi) \in L^2_m(\partial \mathcal{D}) \times \mathcal{C}_{\text{aff}}(\overline{\mathcal{D}})$,
\begin{equation*}
  0 = \int_{\mathcal{D}} \nabla \psi_\varphi \cdot \left(A_\star \nabla u_\star(g)\right) = \int_{\partial \mathcal{D}} g \, \psi_\varphi.
\end{equation*}
Since we can choose $\varphi$ such that $\|\psi_\varphi\|_{L^2(\partial \mathcal{D})} \neq 0$ and take $g = \psi_\varphi$, we reach a contradiction. There hence exist $g_{\star, k+1} \in L^2_m(\partial \mathcal{D})$ and $\varphi_{\star, k+1}\in \mathcal{C}_{\text{aff}}(\overline{\mathcal{D}})$ such that $\underline{z_{k+1}} = \underline{z}(g_{\star, k+1}, \varphi_{\star, k+1})$ is linearly independent of the vectors $\underline{z_j}$ previously constructed at the steps $1\leq j\leq k$. This concludes the proof of Lemma~\ref{lemma:lemma1}. 
\end{proof}

\medskip

\begin{proof}[Proof of Proposition~\ref{prop:quasiminimizers}.]
Let $(\overline{A}_\eps^{\#})_{\eps>0}$ be a sequence of minimizers. Each coefficient is obviously bounded by $\beta$. Hence, up to an extraction that we omit to explicitly write, the sequence $(\overline{A}_\eps^{\#})_{\eps>0}$ converges to a limit that we denote $\overline{A}^{\#}$. Our aim is to prove that $\overline{A}^{\#}$ is in fact equal to $A_\star$. This is done by showing that, for any boundary condition $g \in L^2_m(\partial \mathcal{D})$, the solution $u_\star(g) \in H^1_m(\mathcal{D})$ to~\eqref{eq:eqstar} and the solution $u(\overline{A}^{\#},g) \in H^1_m(\mathcal{D})$ to~\eqref{eq:diffusiveAbar} with coefficient $\overline{A} = \overline{A}^{\#}$ are equal. We are then able to conclude using Lemma~\ref{lemma:lemma1}.

\medskip

We first show that the energies associated to the homogenized problem~\eqref{eq:eqstar} and the coarse problem~\eqref{eq:diffusiveAbar} with $\overline{A} = \overline{A}^{\#}$ are equal. On the one hand, the convergence of $\overline{A}^\#_\eps$ to $\overline{A}^{\#}$ implies the convergence in $H^1(\mathcal{D})$ of $u(\overline{A}^\#_\eps, g)$ to $u(\overline{A}^{\#},g)$, for any $g \in L^2_m(\partial \mathcal{D})$. Using the trace formula, it is then straightforward to see that $u(\overline{A}^\#_\eps,g)$ converges strongly in $L^2(\partial \mathcal{D})$ to $u(\overline{A}^{\#},g)$, and hence, for any $g \in L^2_m(\partial \mathcal{D})$,
\begin{equation} \label{eq:convergenceL2bordudiese}
  \lim_{\eps \to 0} \int_{\partial \mathcal{D}} g \, u(\overline{A}^\#_\eps,g) = \int_{\partial \mathcal{D}} g \, u(\overline{A}^{\#},g).
\end{equation}
On the other hand, Proposition~\ref{prop:Iepsto0} combined with~\eqref{eq:quasiminimizerdef} implies the convergence of $\Psi_\eps(\overline{A}_\eps^\#)$ to $0$ when $\eps$ goes to 0. Hence, for any $g \in L^2_m(\partial \mathcal{D})$, we get
\begin{equation*}
  \lim_{\eps \to 0} \Big| \int_{\partial \mathcal{D}} g \, u(\overline{A}^\#_\eps, g) - \int_{\partial \mathcal{D}} g \, u_\eps(g) \Big| = 0.
\end{equation*}
The first term above has a limit, in view of~\eqref{eq:convergenceL2bordudiese}. We recall (see Section~\ref{subsection:homogenization}) that the second term converges to $\dis \int_{\partial \mathcal{D}} g \, u_\star(g)$. We thus deduce, for any $g \in L^2_m(\partial \mathcal{D})$,
\begin{equation} \label{eq:equalityfg}
  \int_{\partial \mathcal{D}} g \, u_\star(g) = \int_{\partial \mathcal{D}} g \, u(\overline{A}^{\#},g).
\end{equation}

We now use the following polarization relation: for any $f,g \in L^2_m(\partial \mathcal{D})$,
\begin{equation*}
  \int_{\partial \mathcal{D}} (f+g) \, u_\star(f+g) = \int_{\partial \mathcal{D}} f \, u_\star(f) + \int_{\partial \mathcal{D}} g \, u_\star(g) + 2 \int_{\partial \mathcal{D}} f \, u_\star(g),
\end{equation*}
which stems from the fact that $\dis \int_{\partial \mathcal{D}} f \, u_\star(g) = \int_{\partial \mathcal{D}} g \, u_\star(f)$, and likewise for $u(\overline{A}^{\#},\cdot)$. We thus deduce from~\eqref{eq:equalityfg} that, for any $f,g \in L^2_m(\partial \mathcal{D})$,
\begin{equation*}
  \int_{\partial \mathcal{D}} f \, u_\star(g) = \int_{\partial \mathcal{D}} f \, u(\overline{A}^{\#},g),
\end{equation*}
which implies (since $u_\star(g)$ and $u(\overline{A}^{\#},g)$ belong to $L^2_m(\partial \mathcal{D})$) that, for all $g \in L^2_m(\partial \mathcal{D})$, 
\begin{equation} \label{eq:ustarequaludiese}
  u_\star(g) = u(\overline{A}^{\#},g) \quad \text{on $\partial \mathcal{D}$}.
\end{equation}

\medskip

To conclude that $\overline{A}^{\#} = A_\star$, we use Lemma~\ref{lemma:lemma1} and the matrix $Z_\star$ identified there. We introduce the vectors $\underline{A_\star}$ and $\underline{\overline{A}^\#}$ in $\mathbb{R}^{\frac{d(d+1)}{2}}$ such that $[\underline{A_\star}]_{(i,j)} = A_{\star, ij}$ for $1\leq i \leq j \leq d$, and similarly for $\underline{\overline{A}^{\#}}$. It is straightforward to see that, for any $1\leq k \leq \frac{d(d+1)}{2}$,
\begin{multline} \label{eq:long_calcul}
  [Z_\star \, \underline{A_\star}]_k
  =
  \sum_{1\leq i,j\leq d} A_{\star, ij} \int_{\partial \mathcal{D}} u_\star(g_{\star,k}) \, \partial_i \varphi_{\star, k} \, n_j
  =
  \int_{\partial \mathcal{D}} u_\star(g_{\star,k}) \, (A_\star \nabla \varphi_{\star, k}) \cdot n
  \\ =
  \int_{\mathcal{D}} \nabla u_\star(g_{\star,k}) \cdot A_\star \nabla \varphi_{\star, k}
  =
  \int_{\partial\mathcal{D}} g_{\star,k} \, \varphi_{\star, k},
\end{multline}
and
\begin{multline} \label{eq:long_calcul2}
  [Z_\star \, \underline{\overline{A}^\#}]_k = \sum_{1\leq i,j\leq d} \overline{A}^{\#}_{ij} \int_{\partial \mathcal{D}} u_\star(g_{\star,k}) \, \partial_i \varphi_{\star, k} \, n_j \\ = \sum_{1\leq i,j\leq d} \overline{A}^{\#}_{ij} \int_{\partial \mathcal{D}} u(\overline{A}^{\#}, g_{\star,k}) \, \partial_i \varphi_{\star, k} \, n_j = \int_{\partial\mathcal{D}} g_{\star,k} \, \varphi_{\star, k},
\end{multline}
where we have used~\eqref{eq:ustarequaludiese} in the second equality and a computation similar to~\eqref{eq:long_calcul} for the third equality. Collecting~\eqref{eq:long_calcul} and~\eqref{eq:long_calcul2}, we deduce
\begin{equation*}
  Z_\star \, (\underline{A_\star} - \underline{\overline{A}^\#}) = 0.
\end{equation*}
By construction (see Lemma~\ref{lemma:lemma1}), the matrix $Z_\star$ is invertible. We hence infer that $\underline{\overline{A}^\#} = \underline{A_\star}$, which means that $\overline{A}^{\#}=A_\star$.

At the beginning of the proof, we have extracted a subsequence of the minimizer sequence $(\overline{A}_\eps^{\#})_{\eps>0}$. We have just shown that such a subsequence converges to $A_\star$, a limit which is independent of the subsequence. We hence deduce that the {\it whole} sequence $\overline{A}_\eps^\#$ converges to $A_\star$. This concludes the proof of Proposition~\ref{prop:quasiminimizers}. 
\end{proof}

\subsection{Equivalence of the criterion in energy with the one using the full boundary condition} \label{section:section4}

In this section, we compare the strategy~\eqref{eq:infsup} with another strategy (defined in~\eqref{eq:supproblemboundary} below), based upon the full Neumann to Dirichlet map (that is, the complete knowledge of the trace of the function on $\partial \mathcal{D}$).

Both strategies consider an optimization problem of the type 
\begin{equation} \label{eq:strategyFieldBoundary}
  \inf_{\overline{A} \in \mathcal{S}_{\alpha, \beta}} 
  \
  \Psi_\eps^{\text{M}}(\overline{A}),
\end{equation}
where $\Psi_\eps^{\text{M}}$ is a functional depending on the available \textit{Measurement}. The strategy~\eqref{eq:infsup} exploits the functional~\eqref{eq:supproblem}. For the sake of clarity, it is temporarily renamed $\Psi_\eps^{\text{ME}}$ and we recall that
\begin{equation*}
  \Psi_\eps^{\text{ME}}(\overline{A}) = \sup_{\scriptsize \begin{aligned} &g \in L^2_m(\partial\mathcal{D}), \\[-0.5em] \|&g\|_{L^2(\partial\mathcal{D})}=1 \end{aligned}} \left| \mathcal{E}(A_\eps,g) - \mathcal{E}(\overline{A},g) \right|,
\end{equation*}
where the notation \textit{ME} stands for ``Measurement in Energy''. The strategy~\eqref{eq:supproblemboundary} is defined through the functional $\Psi_\eps^{\text{MS}}$ defined by
\begin{equation} \label{eq:supproblemboundary}
  \Psi_\eps^{\text{MS}} (\overline{A}) = \sup_{\scriptsize \begin{aligned} &g \in L^2_m(\partial\mathcal{D}), \\[-0.5em] \|&g\|_{L^2(\partial\mathcal{D})}=1 \end{aligned}} \| u_\eps(g) - u(\overline{A},g) \|_{L^2(\partial \mathcal{D})}.
\end{equation}
It relies on measurements on the boundary of $\mathcal{D}$, and \textit{MS} stands for ``Measurement on Surface'' (in contrast to the approach~\eqref{eq:problemofkunli}, which uses measurements in the interior of $\mathcal{D}$, and that we will refer to in Section~\ref{subsection:numericalresults} as the strategy \textit{MV} for ``Measurement in Volume'').

We show that the two functionals~\eqref{eq:supproblem} and~\eqref{eq:supproblemboundary} are identical, in the sense of Proposition~\ref{prop:PsiuBoundaryPsiE} below.

\begin{proposition}[Comparing $\Psi_\eps^{\text{MS}}$ and $\Psi_\eps^{\text{ME}}$] \label{prop:PsiuBoundaryPsiE}
  For any $\eps >0$ and any $\overline{A}\in \mathcal{S}_{\alpha, \beta}$, the following equality holds:
\begin{equation*}
  \Psi_\eps^{\text{ME}}(\overline{A}) = \frac{1}{2}\Psi_\eps^{\text{MS}}(\overline{A}).
\end{equation*}
\end{proposition}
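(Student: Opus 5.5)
Both functionals will be rewritten in terms of the single operator $\mathcal{H}_{\eps,\overline{A}} = \frac12(\mathcal{T}_\eps - \mathcal{T}_{\overline{A}})$ introduced in Section~\ref{subsection:supproblem}, which is linear, compact and self-adjoint on $L^2_m(\partial\mathcal{D})$. The equality then reduces to a standard fact: for a compact self-adjoint operator, the operator norm equals the largest modulus of its eigenvalues, and also equals the supremum of $|\langle g, Hg\rangle|$ over unit vectors.

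\textbf{Step 1 (energy functional).} Using $\mathcal{E}(A,g) = -\frac12\int_{\partial\mathcal{D}} g\,u(A,g)$, we have
\[
\psi_\eps(\overline{A},g) = \Bigl|\int_{\partial\mathcal{D}} g\,\mathcal{H}_{\eps,\overline{A}} g\Bigr|,
\]
as already noted in Section~\ref{subsection:supproblem}. Hence $\Psi^{\text{ME}}_\eps(\overline{A}) = |\lambda^{\sup}_{\eps,\overline{A}}|$, the largest eigenvalue in absolute value. By the spectral theorem for compact self-adjoint operators on the Hilbert space $L^2_m(\partial\mathcal{D})$, this quantity equals $\|\mathcal{H}_{\eps,\overline{A}}\|_{\mathcal{L}(L^2_m(\partial\mathcal{D}))}$.

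\textbf{Step 2 (surface functional).} For $g \in L^2_m(\partial\mathcal{D})$, the traces $u_\eps(g)$ and $u(\overline{A},g)$ are exactly $\mathcal{T}_\eps g$ and $\mathcal{T}_{\overline{A}} g$, both taken in $L^2_m(\partial\mathcal{D})$ by the normalization. Therefore
\[
\|u_\eps(g) - u(\overline{A},g)\|_{L^2(\partial\mathcal{D})} = 2\,\|\mathcal{H}_{\eps,\overline{A}} g\|_{L^2(\partial\mathcal{D})}.
\]
Taking the supremum over unit $g$ gives $\Psi^{\text{MS}}_\eps(\overline{A}) = 2\,\|\mathcal{H}_{\eps,\overline{A}}\|$.

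\textbf{Step 3 (norm identity).} Combining Steps 1 and 2 yields $\Psi^{\text{ME}}_\eps = \frac12\Psi^{\text{MS}}_\eps$. The only point needing care is the identity $\sup_{\|g\|=1}|\langle g,Hg\rangle| = \|H\|$ for self-adjoint $H$. I would prove it through the eigen-expansion: expand $g = \sum_j c_j e_j$ in an orthonormal eigenbasis of $H$, completed by an orthonormal basis of $\ker H$. Then
\[
\|Hg\|^2 = \sum_j \lambda_j^2 c_j^2 \le (\max_j|\lambda_j|)^2,
\]
with equality at the eigenvector $g^{\sup}_{\eps,\overline{A}}$, where also $|\langle g,Hg\rangle| = |\lambda^{\sup}|$. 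The reverse inequality $|\langle g,Hg\rangle| \le \|Hg\| \le \|H\|$ is simply Cauchy--Schwarz.

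There is no real obstacle. The main things to verify are that the eigenfunctions form a complete basis of the closed subspace $L^2_m(\partial\mathcal{D})$, together with the kernel, so the expansion is legitimate, and that the degenerate case $\mathcal{H}_{\eps,\overline{A}} = 0$ is trivial, with both sides vanishing.
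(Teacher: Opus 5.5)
Your proof is correct and follows essentially the same route as the paper. Both arguments write the energy gap as the Rayleigh quotient of the compact self-adjoint operator $\mathcal{T}_\eps - \mathcal{T}_{\overline{A}}$ and the surface gap as the norm of its action on $g$, then use the spectral theorem to identify both suprema with the largest eigenvalue modulus. The only cosmetic difference is that the paper treats the surface part through the top eigenvalue of $(\mathcal{T}_\eps - \mathcal{T}_{\overline{A}})^2$, whereas you treat it through the operator norm, which amounts to the same eigen-expansion computation.
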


This result is of interest since it assesses that, from a theoretical viewpoint, the results of the two approaches are identical, although the former approach only requires coarse measurements.
Consider indeed a sequence of minimizers $(\overline{A}_\eps^{\text{MS}})_{\eps>0}$ for~\eqref{eq:strategyFieldBoundary} with $\Psi_\eps^{\text{M}} = \Psi_\eps^{\text{MS}}$. Proposition~\ref{prop:PsiuBoundaryPsiE} induces that it is also a sequence of minimizers for~\eqref{eq:infsup}. Conversely, any sequence $(\overline{A}_\eps^{\text{ME}})_{\eps>0}$ of minimizers for~\eqref{eq:infsup} is also a sequence of minimizers for~\eqref{eq:strategyFieldBoundary} with $\Psi_\eps^{\text{M}} = \Psi_\eps^{\text{MS}}$. 

From a practical viewpoint, the two approaches remain different. The supremum over $g \in L^2_m(\partial\mathcal{D})$ is indeed replaced by a maximum over a finite dimensional subspace of $L^2_m(\partial\mathcal{D})$ (see~\eqref{eq:infsupdiscretize}--\eqref{eq:supsquare}), which is itself approximated by~\eqref{eq:supsquare_practical}.

\begin{proof}[Proof of Proposition~\ref{prop:PsiuBoundaryPsiE}] 
Let us fix $\eps > 0$ and $\overline{A} \in \mathcal{S}_{\alpha, \beta}$.

We recall that $\mathcal{T}_\eps$, defined in Section~\ref{subsection:supproblem}, denotes the operator that maps $g \in L^2_m(\partial \mathcal{D})$ to $u_\eps(g) \in L^2_m(\partial \mathcal{D})$, the unique solution in $H^1_m(\mathcal{D})$ to~\eqref{eq:diffusive}. Similarly, we define the operator $\mathcal{T}_{\overline{A}}$ that maps $g\in L^2_m(\partial\mathcal{D})$ to $\overline{u}(g) \in L^2_m(\partial\mathcal{D})$, the unique solution in $H^1_m(\mathcal{D})$ to~\eqref{eq:diffusiveAbar}. Both operators (considered as operators from $L^2_m(\partial\mathcal{D})$ to $L^2_m(\partial\mathcal{D})$) are linear, self-adjoint and compact.

Consequently, the operator $\mathcal{H}_{\eps, \overline{A}} := \mathcal{T}_\eps - \mathcal{T}_{\overline{A}}$ (considered as an operator from $L^2_m(\partial\mathcal{D})$ to $L^2_m(\partial\mathcal{D})$) is linear, self-adjoint and compact. The eigenvalues of $\mathcal{H}_{\eps, \overline{A}}$ hence form a sequence of real numbers $\left\{ \mu_{\eps, \overline{A}}^j \right\}_j$ that converges to $0$. We can consider the eigenvalue $\mu_{\eps, \overline{A}}^{\text{sup}}$ with the largest absolute value, and an associated eigenmode $g_{\eps, \overline{A}}^{\text{sup}}$. We have
\begin{equation*}
  \left( \mathcal{T}_\eps - \mathcal{T}_{\overline{A}} \right) g_{\eps,\overline{A}}^{\text{sup}} = \mu_{\eps, \overline{A}}^{\text{sup}} \ g_{\eps, \overline{A}}^{\text{sup}}.
\end{equation*}
It is clear that $\left(\mu_{\eps, \overline{A}}^{\text{sup}}\right)^2$ is the largest eigenvalue of the operator $\mathcal{H}_{\eps, \overline{A}}^2 := \left(\mathcal{T}_\eps - \mathcal{T}_{\overline{A}} \right)^2$ (which is a diagonalizable operator when considered as an operator from $L^2_m(\partial\mathcal{D})$ to $L^2_m(\partial\mathcal{D})$). We hence have
\begin{align*}
  \Psi_\eps^{\text{ME}}(\overline{A})
  &= \sup_{\scriptsize \begin{aligned} &g \in L^2_m(\partial\mathcal{D}), \\[-0.5em] \|&g\|_{L^2(\partial\mathcal{D})}=1 \end{aligned}} \left| \mathcal{E}(A_\eps, g) - \mathcal{E}(\overline{A},g) \right| \\
  &= \sup_{g \in L^2_m(\partial\mathcal{D})} \frac{|\mathcal{E}(A_\eps, g) - \mathcal{E}(\overline{A},g)|}{\|g\|_{L^2(\partial\mathcal{D})}^2} \\
  &= \frac{1}{2} \, \sup_{g \in L^2_m(\partial\mathcal{D})} \left| \frac{\int_{\partial \mathcal{D}} g \left(\mathcal{T}_\eps - \mathcal{T}_{\overline{A}}\right) g}{\|g\|_{L^2(\partial\mathcal{D})}^2} \right| \\
  &= \frac{1}{2} \left| \mu_{\eps, \overline{A}}^{\text{sup}} \right|,
\end{align*}
and
\begin{align*}
  \Psi_\eps^{\text{MS}}(\overline{A})
  &= \sup_{\scriptsize \begin{aligned} &g \in L^2_m(\partial\mathcal{D}), \\[-0.5em] \|&g\|_{L^2(\partial\mathcal{D})}=1 \end{aligned}} \|u_\eps(g) - u(\overline{A},g)\|_{L^2(\partial \mathcal{D})} \\
  &= \sup_{g \in L^2_m(\partial \mathcal{D})} \frac{\|u_\eps(g) - u(\overline{A},g)\|_{L^2(\partial\mathcal{D})}}{\|g\|_{L^2(\partial\mathcal{D})}} \\
  &= \sup_{g \in L^2_m(\partial \mathcal{D})} \sqrt{\frac{\int_{\partial\mathcal{D}} g \left(\mathcal{T}_\eps - \mathcal{T}_{\overline{A}}\right)^2 g}{\|g\|_{L^2(\partial\mathcal{D})}^2}} \qquad \text{[using that $\mathcal{T}_\eps - \mathcal{T}_{\overline{A}}$ is self-adjoint]}\\
  &= \sqrt{\sup_{g \in L^2_m(\partial\mathcal{D})} \frac{\int_{\partial\mathcal{D}} g\left(\mathcal{T}_\eps - \mathcal{T}_{\overline{A}}\right)^2 g}{\|g\|_{L^2(\partial\mathcal{D})}^2}}\\
  &= \sqrt{\left(\mu_{\eps, \overline{A}}^{\text{sup}}\right)^2} = \left|\mu_{\eps, \overline{A}}^{\text{sup}}\right|.
\end{align*}
Proposition~\ref{prop:PsiuBoundaryPsiE} follows.
\end{proof}

\section{Numerical experiments} \label{section:section5}

We now describe our numerical approach to solve~\eqref{eq:infsup} in practice. In Section~\ref{subsection:algorithm}, we detail the implementation of our algorithm. In Section~\ref{subsection:testcases}, we introduce two test cases and we present the associated numerical results in Section~\ref{subsection:numericalresults}. For simplicity, we restrict ourselves to numerical experiments in dimension $d=2$. There is no restriction (besides more expensive computational costs) to put our approach in practice in higher dimensional settings (the theoretical analysis presented in Section~\ref{section:section3big} being itself valid in any ambient dimension).

\subsection{The algorithm} \label{subsection:algorithm}

To solve~\eqref{eq:infsup}, we start by approximating the supremum~\eqref{eq:supproblem} over all boundary conditions in $L^2_m(\partial \mathcal{D})$ by a maximum over the space spanned by a finite number of boundary conditions (see Section~\ref{section:changingsupbymax}). We then perform a gradient descent to approximate the infimum problem~\eqref{eq:infproblem} (see Section~\ref{section:onlinestage}). Each step $k$ of the gradient descent requires to solve~\eqref{eq:diffusiveAbar} for a new coefficient $\overline{A}$. We highlight that a \textit{coarse} mesh is sufficient to obtain an accurate approximation of $\overline{u}$, since $\overline{A}$ is constant.

\subsubsection{Approximating the supremum by a maximum} \label{section:changingsupbymax}

First, we recall that, in the general case, computing the supremum~\eqref{eq:supproblem} over all boundary conditions in $L^2_m(\partial \mathcal{D})$ is not possible. This supremum is therefore approximated by a maximum over an appropriate finite dimensional subspace $V_n^P(\partial\mathcal{D})$ of $L^2_m(\partial \mathcal{D})$.

The study conducted in Section~\ref{subsection:supproblem} suggests to use the subspace $V_n^P(\partial\mathcal{D})$ defined by~\eqref{eq:VnP} and generated by the $P \geq \frac{d(d+1)}{2}$ first eigenvectors $\{ \phi_p \}_{1\leq p \leq P}$ of the operator $\mathcal{R}$ defined by~\eqref{eq:laplacianinverse}.
In our numerical tests, these eigenmodes are computed using the ARPACK package in FreeFem++~\cite{hecht_freefem}. Our simulations are performed in dimension $d=2$, and the number $P$ of boundary conditions varies from 3 to 5, depending on the value of $\eps$ (see Section~\ref{subsection:numericalresults}). Problem~\eqref{eq:infsup} is now replaced by
\begin{equation} \label{eq:infsupdiscretize}
  \inf_{\overline{A}\in \mathbb{R}^{d\times d}_{\text{sym}}} \Psi_{\eps, P}(\overline{A}),
\end{equation}
where we recall that $\mathbb{R}^{d\times d}_{\text{sym}}$ is the space of symmetric $d \times d$ matrices, and where we have introduced
\begin{equation} \label{eq:supsquare}
  \Psi_{\eps, P}(\overline{A}) = \max_{g \in V_n^P(\partial \mathcal{D})} \psi_\eps(\overline{A}, g)^2 = \max_{g \in V_n^P(\partial \mathcal{D})} \left( \mathcal{E}(A_\eps,g) - \mathcal{E}(\overline{A}, g) \right)^2.
\end{equation}
As mentioned in Remark~\ref{remark:spaceA}, note that we relax the constraints on $\overline{A}$ in~\eqref{eq:infsupdiscretize}, and look for $\overline{A}$ in the space of constant symmetric matrices rather than imposing that $\overline{A}$ belongs to $\mathcal{S}_{\alpha, \beta}$. Note also that, in~\eqref{eq:supsquare}, we optimize upon the square (rather than the absolute value as in~\eqref{eq:infsup}) of the energy difference (we have thus changed the definition of $\Psi_{\eps,P}$ but kept that of $\psi_\eps$). This is simply a numerical trick making the optimization easier.

\subsubsection{Gradient descent} \label{section:onlinestage}

To solve~\eqref{eq:infsupdiscretize}, we perform a gradient descent. It is typically initiated on the mean value of $A_{\text{per}}$. Each iterate $\overline{A}^{k+1}$ is obtained from the previous iterate by
\begin{equation} \label{eq:gradient_descent}
  \overline{A}^{k+1} = \overline{A}^{k} - \mu \, \nabla_{\overline{A}}\Psi_{\eps, P}(\overline{A}^k).
\end{equation}
The gradient $\nabla_{\overline{A}}\Psi_{\eps, P}(\overline{A}^k)$ is computed using the adjoint method. Its expression is
\begin{equation*}
  \nabla_{\overline{A}} \Psi_{\eps, P}(\overline{A}^k) = - \left( \mathcal{E}(A_\eps,g^k) - \mathcal{E}(\overline{A}^k, g^k) \right) \left( (2 - \delta_{ij}) \int_{\mathcal{D}} \partial_i u(\overline{A}^k, g^k) \, \partial_j u(\overline{A}^k, g^k) \right)_{1 \leq i \leq j \leq d},
\end{equation*}
where $g^k$ denotes the argmax of $\Psi_{\eps, P}(\overline{A}^k)$ and $\delta_{ij}$ denotes the Kronecker symbol ($\delta_{ij} = 1$ if $i=j$ and $0$ otherwise). 

Each iterate requires to identify $g^k$, then to compute the field $u(\overline{A}^k, g^k)$ and the difference in energy $\mathcal{E}(A_\eps,g^k)-\mathcal{E}(\overline{A}^k, g^k)$. The task is expensive since computing these quantities essentially requires computing $u(\overline{A}^k,\phi_p)$ for all $1\leq p \leq P$. Nevertheless, since $\overline{A}^k$ is constant, it is sufficient to use a coarse mesh to perform these computations. In practice, we use a mesh of size $H=0.05$.

For a new iterate $\overline{A}^k$, we thus start by computing $u(\overline{A}^k, \phi_p)$ for all $1\leq p \leq P$ on a coarse mesh of size $H$. Then we find the argmax $\dis g^k = \sum_{p=1}^P c_p^k \, \phi_p \in V_n^P(\partial\mathcal{D})$ to $\Psi_{\eps,P}(\overline{A}^k)$. Using the linearity of~\eqref{eq:diffusive} and~\eqref{eq:diffusiveAbar}, identifying the coefficients $\{ c_p^k \}_{1 \leq p \leq P}$ amounts to solving the problem
\begin{equation*}
  \max_{g \in V_n^P(\partial\mathcal{D})} \psi_\eps(\overline{A}^k, g) = \max_{\sum_{p=1}^P c_p^2 = 1} \left| \sum_{1 \leq p,q \leq P} c_p \, \left[ M(\overline{A}^k) \right]_{pq} \, c_q \right|,
\end{equation*}
with the $P \times P$ matrix $M(\overline{A}^k)$ defined by 
\begin{equation} \label{eq:matrixMk}
  M(\overline{A}^k) = \left( \frac{1}{2} \int_{\partial \mathcal{D}} \phi_q \left( u_\eps(\phi_p) - u(\overline{A}^k, \phi_p) \right) \right)_{1\leq p,q \leq P}.
\end{equation}
This boils down to finding the eigenvector associated to the eigenvalue with largest absolute value of the symmetric matrix $M(\overline{A}^k)$ (in practice, we have always observed this eigenvalue to be simple). The difference $\dis \left| \mathcal{E}(A_\eps, g^k) - \mathcal{E}(\overline{A}^k, g^k) \right|$ is equal to the absolute value of the eigenvalue associated to $g_k$. We then write that $\dis u(\overline{A}^k, g^k) = \sum_{p=1}^P c_p^k \, u(\overline{A}^k, \phi_p)$ and we are in position to compute the value of the terms $\dis \left\{ \int_{\mathcal{D}} \partial_i u(\overline{A}^k, g^k) \, \partial_j u(\overline{A}^k, g^k) \right\}_{1 \leq i \leq j \leq d}$. Once this has been done, the direction of descent $-\nabla_{\overline{A}}\Psi_{\eps, P}(\overline{A}^k)$ can be computed.

In our experiments, the step $\mu$ is selected with a line search algorithm. The Armijo rule is used with the parameter $m=0.1$ (chosen empirically). In practice, convergence of the algorithm~\eqref{eq:gradient_descent} is typically observed after a few dozens of iterations (see Figure~\ref{fig:combined}).

\begin{figure}[htbp]
\centering
     
\begin{subfigure}[t]{0.48\textwidth}
  \centering
  \begin{tikzpicture}[scale=0.9]
    \begin{axis}[
        xlabel=$k$,
        legend pos=north east,
        grid=both,
        xmin=0, xmax=50,
        ymin=0., ymax=0.2,
      ]       
      \addplot[ 
        mark=+,
        blue, 
        mark size=3pt,
        line width=1pt
      ] file {Data/costfunction_InfSumEnergyNeumannGradientDescent_OscillatingCase_highcontrast_eps0.05_P3_noHessian_Armijo_m1=0.1_H=0.005_projected_caseperiodic.txt};
      \legend{$\Psi_{\eps, P}(\overline{A}^k)$};
    \end{axis}
  \end{tikzpicture}
  \caption{Evolution of $\Psi_{\eps, P}(\overline{A}^k)$ with iteration $k$.}
  \label{fig:costfunck}
\end{subfigure}
\hfill
\begin{subfigure}[t]{0.48\textwidth}
  \centering
  \begin{tikzpicture}[scale=0.9]
    \begin{axis}[
        xlabel=$k$,
        legend style={at={(axis cs:48,1)},anchor=south east},
        grid=both,
        xmin=0, xmax=50,
        ymin=-1., ymax=24,
      ]       
      \addplot[ 
        mark=+,
        blue, 
        mark size=3pt,
        line width=1pt
      ] table [x index=0, y index=1] {Data/coefficient_InfSumFieldEnergyNeumannGradientDescentPartial_StationnaryErgodicrmr_eps0.05_P3_noHessian_Armijo_m1=0.1_H=0.005.txt};
      \addplot[ 
        mark=+,
        purple, 
        mark size=3pt,
        line width=1pt
      ] table [x index=0, y index=3] {Data/coefficient_InfSumFieldEnergyNeumannGradientDescentPartial_StationnaryErgodicrmr_eps0.05_P3_noHessian_Armijo_m1=0.1_H=0.005.txt};
      \addplot[ 
        mark=+,
        orange, 
        mark size=3pt,
        line width=1pt
      ] table [x index=0, y index=2] {Data/coefficient_InfSumFieldEnergyNeumannGradientDescentPartial_StationnaryErgodicrmr_eps0.05_P3_noHessian_Armijo_m1=0.1_H=0.005.txt};
      
      \legend{$\overline{A}_{11}$, $\overline{A}_{22}$, $\overline{A}_{12}$};
    \end{axis}
  \end{tikzpicture}
  \caption{Evolution of the components of $\overline{A}^k$ with iteration $k$.}
  \label{fig:Abark}
\end{subfigure}

\caption{Convergence history of the algorithm for $\eps=0.05$ and $P=3$.}
\label{fig:combined}
\end{figure}
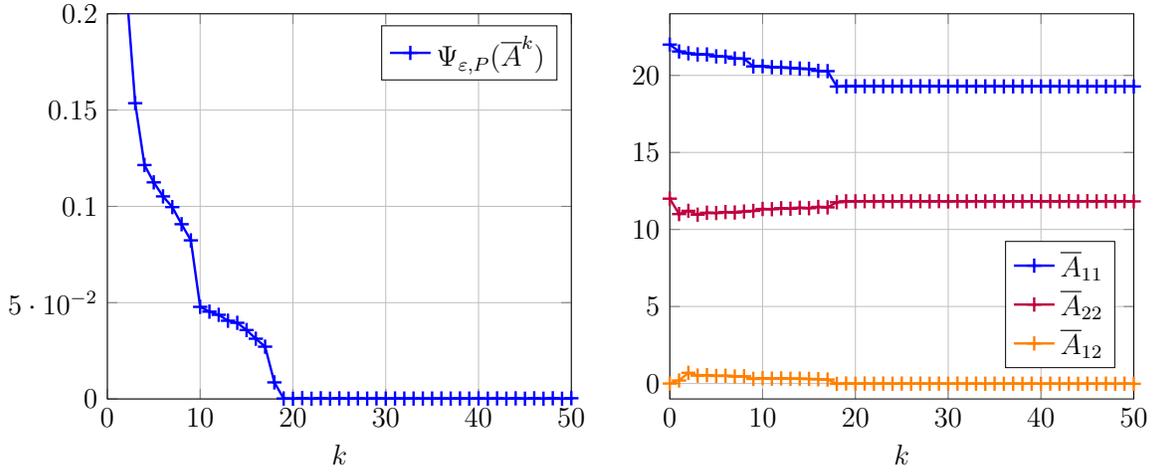

\begin{remark}
Note that, to compute the matrix $M(\overline{A}^k)$ defined in~\eqref{eq:matrixMk}, we need the value of $\dis \int_{\partial \mathcal{D}} \phi_q \, u_\eps(\phi_p)$ for each couple $(p,q) \in \llbracket 1, P\rrbracket^2$. In the practical implementation of our inversion procedure in an experimental context, these energies, for the case $q=p$, would be given by actual measurements. The cross terms with $p \neq q$ can then be computed by a polarization formula:
\begin{equation*}
  \int_{\partial \mathcal{D}} \phi_q \, u_\eps(\phi_p) = - \Big( \mathcal{E}(A_\eps,\phi_p + \phi_q) - \mathcal{E}(A_\eps,\phi_p) - \mathcal{E}(A_\eps,\phi_q) \Big).
\end{equation*}
In practice, computing the cross terms can be tedious. In our numerical experiments, replacing $\Psi_{\eps, P}$ by $\Psi_{\eps, P}^\Sigma$, defined as
\begin{equation} \label{eq:supsquare_practical}
  \Psi_{\eps, P}^\Sigma(\overline{A}) = \sum_{p=1}^P \psi_\eps(\overline{A}, \phi_p)^2,
\end{equation}
yields equivalent results and does not require to precompute the cross terms $\dis \int_{\partial \mathcal{D}} \phi_p \, u_\eps(\phi_q)$ (for any $1 \leq p < q \leq P$), nor to optimize with respect to some $g \in L^2_m(\partial \mathcal{D})$. This is what we have done.
\end{remark}

\subsection{Test cases} \label{subsection:testcases}

In this section, we introduce our two test cases, on the domain $\mathcal{D} = (0,1)^2$. 

\subsubsection{Periodic setting} \label{subsection;testcaseperiodic}

We consider the coefficient
\begin{equation*}
  A_\eps(x,y) = A_{\text{per}}\left( \frac{x}{\eps}, \frac{y}{\eps}\right),
\end{equation*}
where $A_{\text{per}}$ is the $\mathbb{Z}^2$-periodic (symmetric matrix valued) coefficient given (see Figure~\ref{fig:per_coeff_11} for some illustration) by 
\begin{equation} \label{eq:A11A22A12per}
\begin{aligned}
  [A_{\text{per}}(x,y)]_{11} &= 22 + 10 \left( \sin(2\pi x) + \sin(2\pi y) \right),\\
  [A_{\text{per}}(x,y)]_{22} &= 12 + 2 \left( \sin(2\pi x) + \sin(2\pi y) \right),\\
  [A_{\text{per}}(x,y)]_{12} &= [A_{\text{per}}(x,y)]_{21} = 0.
\end{aligned}
\end{equation}
The associated homogenized coefficient can be computed using the classical formula~\eqref{eq:astar} involving the correctors defined by~\eqref{eq:corrector}. We obtain
\begin{equation} \label{eq:A_star_cas_per}
  [A_\star]_{11} \approx 19.3378, \qquad [A_\star]_{22} \approx 11.8312, \qquad [A_\star]_{12} = [A_\star]_{21} \approx 0,
\end{equation}
where the corrector functions have been computed on a mesh with discretization parameter $h_\star \approx 5 \times 10^{-4}$. 
We will consider these values of $[A_\star]_{ij}$ as our reference values.

\begin{figure}[htbp]
\centering
\includegraphics[width=5.5cm]{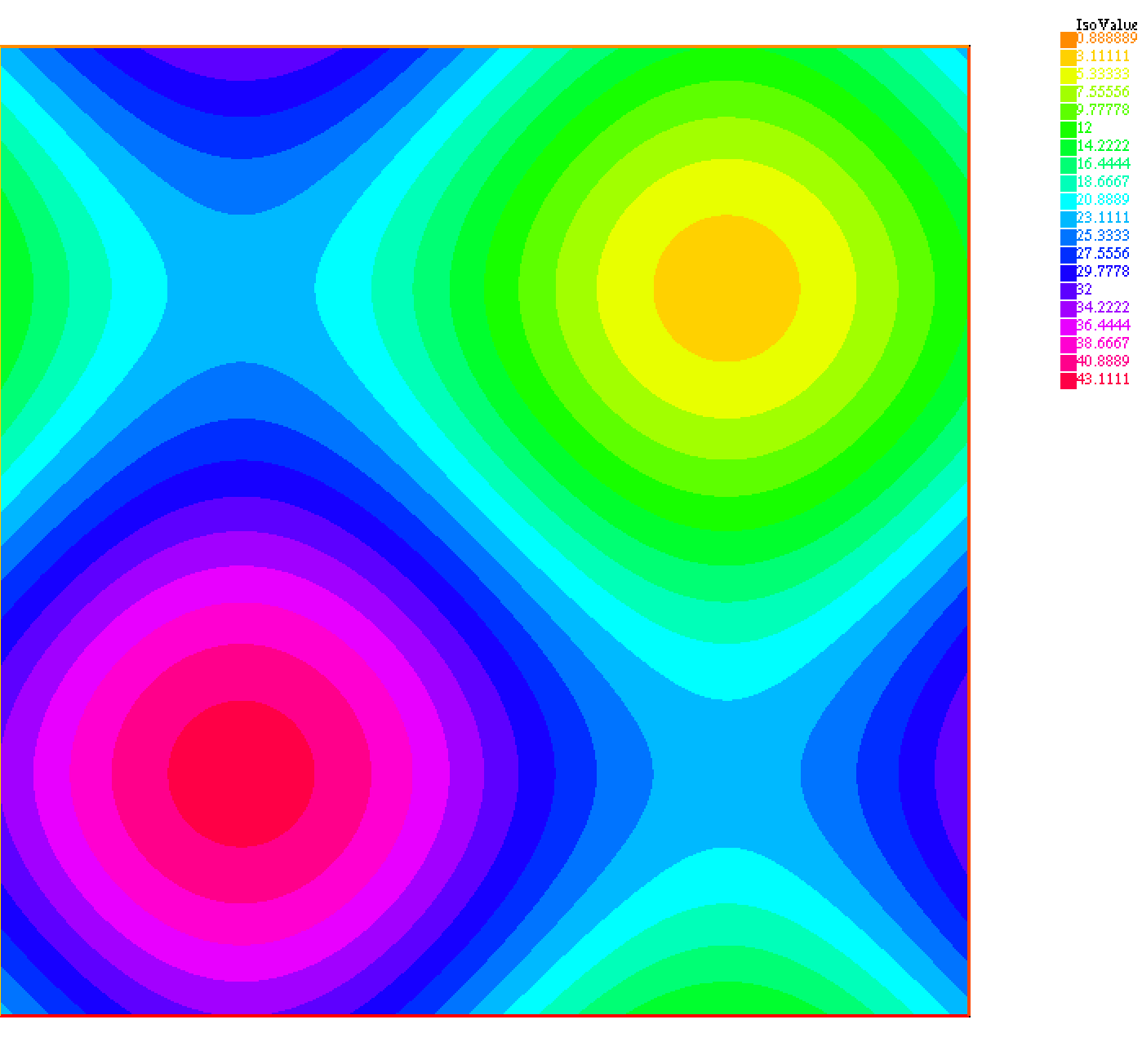}
\caption{Component $11$ of $A_{\text{per}}$ defined in~\eqref{eq:A11A22A12per} and displayed on the periodic cell $(0,1)^2$.}
\label{fig:per_coeff_11}
\end{figure}

\subsubsection{Random stationary setting} \label{subsection:testcaserandom} 

We alternatively consider here a random (stationary) setting. Like in the periodic setting, the problem has a deterministic homogenized limit with constant coefficients. The setting therefore falls within the theoretical setting considered in Section~\ref{section:section3} (see Remark~\ref{rem:extension}). In addition, in general, identifying this homogenized matrix requires expensive numerical computations. It is therefore interesting to design variants of our deterministic formulation~\eqref{eq:infsup} and investigate their ability to provide an accurate effective operator. We consider here a particular example, for which the homogenized matrix is analytically known. 

Let $(\Omega, \mathcal{F}, \mathbb{P})$ be a probability space and let us consider the two-dimensional random checkerboard coefficient defined, for all $x \in \mathcal{D}$ and all $\omega \in \Omega$, by
\begin{equation*}
A_\eps(x, \omega) = a_{\text{rand}}\left(\frac{x}{\eps}, \omega\right) \text{Id},
\end{equation*}
where $a_{\text{rand}}$ is the discrete random stationary (scalar valued) coefficient defined by
\begin{equation*}
a_{\text{rand}}(x,\omega) = \sum_{k \in \mathbb{Z}^2} \mathbb{1}_{Q+k}(x) \, X_k(\omega),
\end{equation*}
where $Q=(0,1)^2$ and $X_k$ are i.i.d. random variables following a Bernoulli law: $\mathbb{P}(X_k=4) = \mathbb{P}(X_k=16) = 1/2$ (see Figure~\ref{fig:sto_coeff} for some illustration).

\begin{figure}[htbp]
  \centering
  \includegraphics[width=7.5cm]{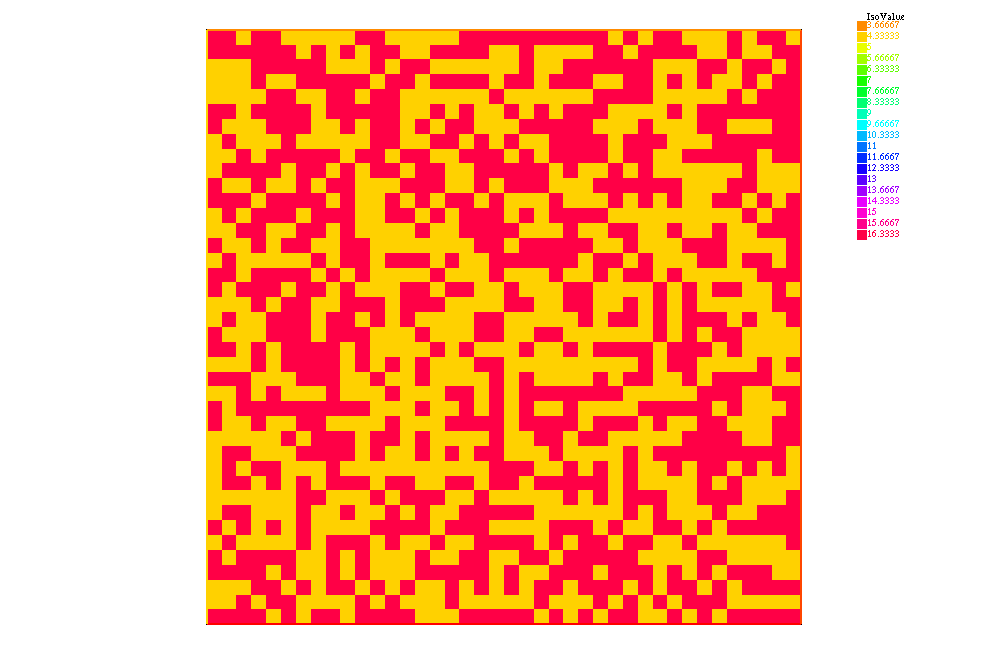}
  \includegraphics[width=7.5cm]{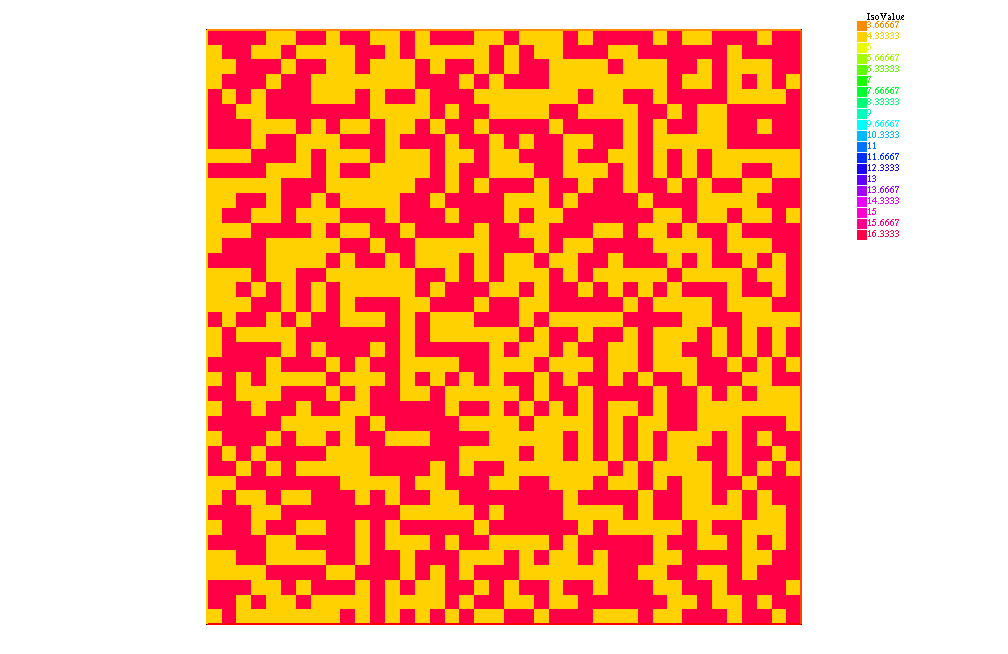}
  \caption{Two realizations of $a_{\text{rand}}(\cdot, \omega)$ restricted to the square $(-20, 20)^2$.}
  \label{fig:sto_coeff}
\end{figure}

Homogenization theory applies and it can be shown (see e.g.~\cite{papanicolaou1979boundary}) that, for any $g \in L^2_m(\partial \mathcal{D})$, the random solution $u_\eps$ (we again enforce that the mean of $u_\eps(\cdot,\omega)$ on $\partial \mathcal{D}$ vanishes) to 
\begin{equation} \label{eq:diffusive_sto}
  \left\{
  \begin{aligned}
    -\text{div} \left(A_\eps(x, \omega) \nabla u_\eps(x,\omega)\right) = 0 \quad \text{a.s. in $\mathcal{D}$},\\
    \left(A_\eps(x, \omega) \nabla u_\eps(x,\omega)\right) \cdot n = g \quad \text{a.s. on $\partial\mathcal{D}$},
  \end{aligned}
  \right.
\end{equation}
converges, weakly in $L^2(\Omega, H^1(\mathcal{D}))$, to the solution to the deterministic homogenized problem~\eqref{eq:eqstar}, where the homogenized coefficient is analytically known:
\begin{equation*}
  A_\star = \sqrt{4\times16} \ \text{Id} = 8 \, \text{Id}.
\end{equation*}

In this random setting, the strategy~\eqref{eq:problemofkunli} rewrites
\begin{equation*} 
  \inf_{\overline{A} \in \mathcal{S}_{\alpha, \beta}} \ \sup_{\scriptsize \begin{aligned} &g \in L^2_m(\partial \mathcal{D}), \\[-0.5em] \|&g\|_{L^2(\partial \mathcal{D})}=1 \end{aligned}} \left\| \mathbb{E}\left[u_\eps(g, \omega)\right] - u(\overline{A}, g) \right\|_{L^2(\mathcal{D})},
\end{equation*}
the strategy~\eqref{eq:infsup} rewrites
\begin{equation*} 
  \inf_{\overline{A} \in \mathcal{S}_{\alpha, \beta}} \ \sup_{\scriptsize \begin{aligned} &g \in L^2_m(\partial \mathcal{D}), \\[-0.5em] \|&g\|_{L^2(\partial \mathcal{D})}=1 \end{aligned}} \left| \mathbb{E}\big[\mathcal{E}(A_\eps(\cdot, \omega), g)\big] - \mathcal{E}(\overline{A}, g) \right|,
\end{equation*}
while~\eqref{eq:supproblemboundary} rewrites
\begin{equation*} 
  \inf_{\overline{A} \in \mathcal{S}_{\alpha, \beta}} \ \sup_{\scriptsize \begin{aligned} &g \in L^2_m(\partial \mathcal{D}), \\[-0.5em] \|&g\|_{L^2(\partial \mathcal{D})}=1 \end{aligned}} \left\| \mathbb{E}\left[u_\eps(g, \omega)\right] - u(\overline{A}, g) \right\|_{L^2(\partial\mathcal{D})}.
\end{equation*}
Note that these formulations are arbitrary. Other choices are possible, e.g. placing the expectation outside the supremum. The choice of the above formulations is motivated by the fact that it significantly reduces the computational costs. Indeed, the minimization and/or the maximization are computed only once (rather than for each $\omega$).

\subsection{Numerical results} \label{subsection:numericalresults}


We apply and compare the strategies mentioned above for the two test cases introduced in Section~\ref{subsection:testcases}. We compare the results
\begin{itemize}
\item of the strategy~\eqref{eq:infsup}, referred to as strategy \textit{ME} (``Measurement in Energy'');
\item of the strategy presented in~\cite{le2013approximation}, redefined in~\eqref{eq:problemofkunli} and referred to as strategy \textit{MV} (``Measurement in Volume''); 
\item of the strategy consisting in minimizing~\eqref{eq:supproblemboundary}, referred to as strategy \textit{MS} (``Measurement on Surface'').
\end{itemize}
The above three strategies provide an effective coefficient that a priori depends on $\eps$. It is also interesting to consider the homogenized coefficient and to compare, in the regime when $\eps$ is not asymptotically small, the effective models obtained using the above strategies with the homogenized model. We expect (and this is indeed the case, as shown below) that our effective models perform better than the homogenized model, since they are built using solutions of the actual heterogeneous model and thus encode in some way the precise value of the scale $\eps$. 

The comparison between the various strategies is performed first in a periodic case (in Section~\ref{sec:periodic_case}), and next in a random stationary case (in Section~\ref{sec:random_case}). 
Our aim with the numerical tests reported below is
\begin{itemize}
\item to investigate the impact of considering a finite-dimensional space (of dimension $P$) for the boundary conditions, rather than the space $L^2_m(\partial \mathcal{D})$ as in the formulation~\eqref{eq:infsup}. We will see below that the choice of $P$ suggested in Section~\ref{sec:precomput} leads to satisfying results, in the sense that the practically computed effective coefficient does converge to the homogenized coefficient when $\eps \to 0$, and that, for larger values of $\eps$, the effective operator indeed turns out to be an accurate approximation of the oscillatory operator. In addition, we briefly investigate (in the periodic case) a choice for the boundary conditions alternative to the one suggested above (namely considering the eigenmodes of the operator $\mathcal{R}$ defined in~\eqref{eq:laplacianinverse}), which turns out to lead to less accurate results. This confirms our initial way of choosing the boundary conditions.
\item investigate the accuracy with which our effective operator approximates the oscillatory operator, in particular in the regime when $\eps$ is not small. We then consider a large number $Q \gg P$ of boundary conditions, and monitor the difference between $u_\eps(g)$ and $u(\overline{A}, g)$ for any $g$ in the generated space of dimension $Q$. A typical result is that, for $\eps = 0.25$ (to be compared with the size of $\mathcal{D}$), the relative error (in $L^2(\mathcal{D})$ norm) between $u_\eps(g)$ and $u(\overline{A}, g)$ is of the order of 20\%.
\end{itemize}
We eventually investigate the robustness of our approach with respect to noise (in Section~\ref{sec:noise_general}), seen as either a disturbance or a way to improve the quality of the approach.

\subsubsection{Precomputations} \label{sec:precomput}

We assume that we are given the observables, namely the fields $u_\eps(g)$ in the whole domain $\mathcal{D}$, their restriction to $\partial \mathcal{D}$ or the energies $\mathcal{E}(A_\eps,g)$. We recall that, in the practical implementation of our algorithm, the latter quantities could be given by actual measurements. In the simulations presented below, these quantities are obtained as follows.

We compute the solutions $u_\eps(\phi_p)$ to~\eqref{eq:diffusive} for any $1\leq p \leq P$ using a sufficiently fine mesh that captures all the variations of the coefficient $A_\eps$. To compute the energy, we simply compute the integral~\eqref{eq:energy} on the fine mesh. In practice, we use a mesh size $h\ll \eps$, namely $h = \eps/r$ with $r=40$ in the deterministic case and $r=20$ in the random case. This choice is motivated by the fact that, in the random case, we consider many realizations of the coefficient $a_{\text{rand}}(\cdot, \omega)$ (and we thus compute, for each boundary condition, many solutions to~\eqref{eq:diffusive_sto}), in order to approximate the expectation of the solutions (or of the energy). We thus increase the mesh size in order to reduce the computational time.

In the random case, expectations are computed using $M_1=40$ independent realizations of $a_{\text{rand}}(\cdot, \omega)$. Furthermore, confidence intervals at 95\% are computed using $M_2=40$ independent batches of $M_1=40$ independent realizations of $a_{\text{rand}}(\cdot, \omega)$. 

As far as the number $P$ of boundary conditions is concerned, we choose $P=3$ for $\eps < 0.2$ and $P=5$ for $\eps \geq 0.2$ to compute the coefficient $\overline{A}^{\text{ME}}_\eps$. These values of $P$ are determined empirically, taking into account a balance between limiting the number of considered boundary conditions and maintaining the quality of the resulting effective coefficient. To ensure a fair comparison, we take the same value of $P$ to compute $\overline{A}^{\text{MS}}_\eps$ and $\overline{A}^{\text{MV}}_\eps$. 

\subsubsection{Periodic case} \label{sec:periodic_case}


We focus on the periodic setting presented in Section~\ref{subsection;testcaseperiodic}.

Figure~\ref{fig:absoluteerrorAbar} shows the relative error 
\begin{equation} \label{eq:errorAB}
  \text{Err}_\star(\overline{A}) = \sqrt{\frac{\sum_{1\leq i \leq j \leq d} \, (\overline{A}_{ij} - A_{\star,ij})^2}{\sum_{1\leq i \leq j \leq d} A_{\star,ij}^2}}
\end{equation}
between any effective coefficient $\overline{A} \in \left\{ \overline{A}^{\text{ME}}_\eps, \overline{A}^{\text{MS}}_\eps, \overline{A}^{\text{MV}}_\eps \right\}$ and the homogenized coefficient $A_\star$.

When $\eps \to 0$, we observe that each strategy recovers the homogenized coefficient $A_\star$. This consistency would be expected for $\overline{A}^{\text{ME}}_\eps$ (see Proposition~\ref{prop:quasiminimizers}), as well as for $\overline{A}^{\text{MS}}_\eps$ (see Section~\ref{section:section4}) and for $\overline{A}^{\text{MV}}_\eps$ (see~\cite{le2013approximation}), if they were defined through a maximisation over the infinite dimensional space $L^2_m(\partial \mathcal{D})$ of all possible boundary conditions, and is again observed here in practice. In addition, the convergence of $\overline{A}^{\text{ME}}_\eps$, $\overline{A}^{\text{MS}}_\eps$ and $\overline{A}^{\text{MV}}_\eps$ to $A_\star$ is observed to be of order $\eps^2$. 

We also observe that $\overline{A}^{\text{ME}}_\eps$ and $\overline{A}^{\text{MS}}_\eps$ slightly differ, although they are expected to coincide from a theoretical point of view when maximizing over all $g \in L^2_m(\partial \mathcal{D})$ (see Section~\ref{section:section4}). The discrepancy that we observe in practice can be attributed to the approximation of the supremum~\eqref{eq:supproblem} by a maximum over a finite dimensional space spanned by some boundary conditions. Indeed, the boundary conditions selected in Section~\ref{section:changingsupbymax} to build this finite dimensional space were identified under the assumptions that the coefficients $A_\star$ and $\overline{A}$ are scalar. This is not the case here (recall~\eqref{eq:A_star_cas_per}, that shows that there is a ratio of the order of two between the two diagonal coefficients of $A_\star$), and we presume that this particularity explains the mismatch of the two coefficients $\overline{A}^{\text{ME}}_\eps$ and $\overline{A}^{\text{MS}}_\eps$. Nonetheless, we emphasize that the search space $V_n^P(\partial\mathcal{D})$ remains sufficiently rich (even though it does not include the ideal boundary conditions) for our approach to yield an effective coefficient $\overline{A}^{\text{ME}}_\eps$ which is (in the regime $\eps \to 0$) an accurate approximation of $A_\star$. 


\begin{figure}[htbp]
    \centering
    \begin{tikzpicture}[scale=1]
        \begin{axis}[
            xlabel=$\eps$,
            legend pos=north west,
            grid=both,
            ymode=log,
            xmode=log,
            xmin=0.01, xmax=0.5,
            legend style={nodes={scale=0.75}},
        ]        
            \addplot[color=red, mark=square, line width=1pt] coordinates{
                (0.4,   0.34638745095639106) 
                (0.3,   0.23029414947793211) 
                (0.225, 0.20878247497704897) 
                (0.2,  0.027115601530760022) 
                (0.1,  0.008428116313046033) 
                (0.05, 0.002998342141527602) 
                (0.025, 0.0005017513817364534)
            };
    
            \addplot[color=orange, mark=square, line width=1pt] coordinates{
                (0.4,   0.16953188700806127) 
                (0.3,   0.10285898575445625) 
                (0.225, 0.09805878516980554) 
                (0.2,   0.015055602913202107)  
                (0.1,   0.0049949033878236935) 
                (0.05,  0.0015929649147040903) 
                (0.025, 0.000316979373234581)
            };
             
            \addplot[color=brown, mark=square, line width=1pt] coordinates{
                (0.4,   0.09990307247081354)
                (0.3,   0.05574351968168939)
                (0.225, 0.05691911125822236)
                (0.2,   0.0073557833962990165)
                (0.1,   0.001359781197003937)
                (0.05,  0.00031865735231028877)
            };
    
            \addplot[color=black, dashed, line width=1pt] coordinates{
                (0.2,0.08 )
                (0.1,0.02)
                (0.05,0.005)
                (0.025,0.00125)
            };
            \legend{$\overline{A}_\eps^{\text{ME}}$, $\overline{A}_\eps^{\text{MS}}$, $\overline{A}_\eps^{\text{MV}}$, $y \propto \eps^2$};
        \end{axis}
    \end{tikzpicture}
    \caption{Error~\eqref{eq:errorAB} between the homogenized coefficient $A_\star$ and the effective coefficients $\overline{A}_\eps^{\text{ME}}$, $\overline{A}_\eps^{\text{MS}}$ and $\overline{A}_\eps^{\text{MV}}$, as a function of $\eps$.}
    \label{fig:absoluteerrorAbar}
\end{figure}
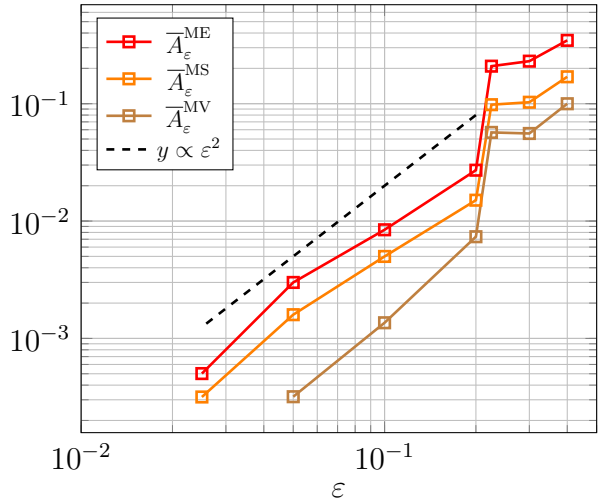

\medskip


In order to assess the quality of the effective operator (using the coefficient $\overline{A}$) in comparison to the oscillatory operator, we introduce the criterion
\begin{equation} \label{eq:criteriarelativ}
  \text{Err}_{\eps, Q}(\overline{A}) = \sup_{g \in V_n^Q(\partial\mathcal{D})} \frac{\left\| u_\eps(g) - u(\overline{A}, g) \right\|_{L^2(\mathcal{D})}}{\left\| u_\eps(g) \right\|_{L^2(\mathcal{D})}}.
\end{equation}
It represents the worst $L^2$ error on the whole domain $\mathcal{D}$. In practice, we choose $Q = 11 \gg P$, so that we test the effective coefficients $\overline{A}_\eps$ (computed using the data stemming from $P$ boundary conditions) on a large number of new boundary conditions. Figure~\ref{fig:absoluteerror} shows $\text{Err}_{\eps, Q}(\overline{A})$ for different values of $\eps$ and for the effective coefficients $\overline{A}^{\text{ME}}_\eps$, $\overline{A}^{\text{MS}}_\eps$ and $\overline{A}^{\text{MV}}_\eps$ (we also include the homogenized coefficient $A_\star$ in our comparison, to investigate whether, when $\eps$ is not asymptotically small, our approach indeed provides a more accurate approximation of $u_\eps$ than the homogenization approach).

In the regime $\eps \to 0$, all approaches perform similarly and with the accuracy predicted by homogenization theory. For instance, $\text{Err}_{\eps, Q}(\overline{A})$ is of the order of 0.03 (i.e. 3\%) for $\eps=0.025$, regardless of the choice $\overline{A} \in \left\{A_\star, \overline{A}_\eps^{\text{ME}}, \overline{A}_\eps^{\text{MS}}, \overline{A}_\eps^{\text{MV}} \right\}$. It is noteworthy that, among all the coefficients considered, only the computation of $A_\star$ relies on the full knowledge of the microstructure $A_\eps$ (whereas the other effective coefficients only depend on the knowledge of the energy or of the solution field). We also emphasize that computing $\overline{A}_\eps^{\text{ME}}$ requires much coarser data than computing $\overline{A}_\eps^{\text{MS}}$ or $\overline{A}_\eps^{\text{MV}}$.
 
For larger values of $\eps$ (say $\eps > 0.1$), Figure~\ref{fig:absoluteerror} shows that the effective coefficients $\overline{A}^{\text{ME}}_\eps, \overline{A}^{\text{MS}}_\eps, \overline{A}^{\text{MV}}_\eps$ outperform those non calibrated for this regime of oscillation. Typically, the homogenized coefficient $A_\star$ yields an approximation error that is $5\%$ to $10\%$ larger than the error associated with the coefficients obtained through the other approaches (for instance, for $0.2 \leq \eps \leq 0.3$, the error~\eqref{eq:criteriarelativ} is of the order of 30\% with the homogenized coefficient $A_\star$ while it is of the order of 20\% for the effective coefficients). While $\overline{A}^{\text{MV}}_\eps$ is surprisingly slightly less accurate, coefficients $\overline{A}^{\text{ME}}_\eps$ and $\overline{A}^{\text{MS}}_\eps$ yield the best approximations.



\begin{figure}[htbp]
    \centering
    \begin{tikzpicture}[scale=1]
      \begin{axis}[
          xlabel=$\eps$,
          legend pos=north west,
          grid=both,
          xmin=0.001, xmax=0.41,
      ]        
      \addplot[color=blue, mark=square, line width=1pt] coordinates{
            (0.4,0.506197428387412)
            (0.3,0.31561411914825654)
            (0.275,0.2935850458478148)
            (0.225,0.24828552577733126)
            (0.2,0.2605886663215005)
            (0.175,0.20274077516587577)
            (0.15,0.17581661190570572)
            (0.125,0.1700830909963723)
            (0.1,0.1358242935514575)
            (0.05,0.06554926904942866)
            (0.025, 0.03089304389077087)
      };
          \addplot[color=red, mark=square, line width=1pt] coordinates{
            (0.4,0.3266232129720646)
            (0.3,0.261359997688999)
            (0.275,0.20405252168941213)
            (0.225,0.20251251182164542)
            (0.2,0.24795174182750587)
            (0.175,0.1513967014973608)
            (0.15,0.12627058861298684)
            (0.125,0.16247646733830642)
            (0.1,0.13095018539702805)
            (0.05,0.06402922015084547)
            (0.025, 0.031050925879217033) 
          };
          \addplot[color=orange, mark=square, line width=1pt] coordinates{
            (0.4,0.30806588554013464)
            (0.3,0.23758092406765455)
            (0.275,0.2127589222835853)
            (0.225,0.17399570083452778)
            (0.2,0.25210277602508124)
            (0.175,0.16877323955397405)
            (0.15,0.13712613564009557)
            (0.125,0.16534044821794183)
            (0.1,0.13248642594234336)
            (0.05,0.06448248921807499)
          };
          \addplot[color=brown, mark=square, line width=1pt] coordinates{
            (0.4,0.3796090753977046)
            (0.3,0.2693295207273974)
            (0.275,0.23801183939840087)
            (0.225,0.20009860750722516)
            (0.2,0.2564839280834162)
            (0.175,0.17735686148082627)
            (0.15,0.14938039769515002)
            (0.125,0.16849617897416577)
            (0.1,0.13514873074139572)
            (0.05,0.0657468555786141)
            (0.025, 0.03089304389077087)
  
          };
          \legend{$A_\star$, $\overline{A}_\eps^{\text{ME}}$, $\overline{A}_\eps^{\text{MS}}$, $\overline{A}_\eps^{\text{MV}}$};
      \end{axis}
    \end{tikzpicture}
    \caption{Error~\eqref{eq:criteriarelativ} (computed with $Q=11$) for $\overline{A} \in \left\{ A_\star, \overline{A}_\eps^{\text{ME}}, \overline{A}_\eps^{\text{MS}}, \overline{A}_\eps^{\text{MV}} \right\}$, as a function of $\eps$.}
    \label{fig:absoluteerror}
\end{figure}

\medskip

We conclude our investigation of the periodic test case by temporarily considering the use of alternative boundary conditions for the construction of the effective coefficient. Instead of considering the first eigenmodes $\dis \{ \phi_i \}_{1 \leq i \leq d(d+1)/2}$ of the operator $\mathcal{R}$ defined in~\eqref{eq:laplacianinverse} (as we explained in Section~\ref{subsection:supproblem}), we consider the boundary conditions $\dis \{ \widetilde{\phi}_{ij} \}_{1 \leq i \leq j \leq d}$ defined by
$$
\widetilde{\phi}_{ij} = e_{ij} \cdot n \quad \text{with the notation} \quad e_{ij} = \frac{e_i + e_j}{2}
$$
for any $1 \leq i \leq j \leq d$, where $(e_1,\dots,e_d)$ denotes the canonical basis of $\mathbb{R}^d$ and $n$ is the outward unit normal vector to $\partial \mathcal{D}$. This choice is motivated by the fact that it may lead to easier-to-establish theoretical results (concerning in particular rates of convergence of the effective coefficient $\overline{A}_\eps$ to $A_\star$) for our approach (indeed, for any constant coefficient $\overline{A}$, the solution to~\eqref{eq:diffusiveAbar} with $g = \widetilde{\phi}_{ij}$ is analytically known and reads $\dis \overline{u}_{ij}(x) = \left( \overline{A}^{-1} e_{ij} \right) \cdot x$). 

After selecting these conditions, we consider $\widetilde{A}^{\text{ME}}_\eps$ obtained by solving~\eqref{eq:infsupdiscretize}--\eqref{eq:supsquare} where the space $V_n^P(\partial \mathcal{D})$ (normalized linear combinations of the $\phi_i$) is replaced by the space $\widetilde{V}_n^P(\partial \mathcal{D})$ of normalized linear combinations of the $\widetilde{\phi}_{ij}$. We then compare the quality of $\overline{A}^{\text{ME}}_\eps$ and $\widetilde{A}^{\text{ME}}_\eps$ by using the criteria~\eqref{eq:errorAB} and~\eqref{eq:criteriarelativ}. Results are presented on Figures~\ref{fig:absoluteerrorAbar_cohen} and~\ref{fig:absoluteerror_cohen}. As can be seen on Figure~\ref{fig:absoluteerrorAbar_cohen}, except for the smallest value of $\eps$ that we considered, $\widetilde{A}^{\text{ME}}_\eps$ is closer to $A_\star$ than $\overline{A}^{\text{ME}}_\eps$. The comparison of the solution fields is shown on Figure~\ref{fig:absoluteerror_cohen}. For small values of $\eps$ (say $\eps \leq 0.1$), we observe that both effective coefficients provide approximations that share the same accuracy (this was expected, since $\overline{A}^{\text{ME}}_\eps$ and $\widetilde{A}^{\text{ME}}_\eps$ are close to each other, as can be seen on Figure~\ref{fig:absoluteerrorAbar_cohen}). For larger values of $\eps$, the coefficient $\overline{A}^{\text{ME}}_\eps$ provides slightly more accurate approximations.

\begin{figure}
  \centering
  \begin{tikzpicture}[scale=1]
    \begin{axis}[
        xlabel=$\eps$,
        legend pos=north west,
        grid=both,
        ymode=log,
        xmode=log,
        xmin=0.01, xmax=0.5,
        legend style={nodes={scale=0.75}},
      ]        
      \addplot[color=red, mark=square, line width=1pt] coordinates{
        (0.4, 0.10774503492)
        (0.3, 0.056537630962)
        (0.225, 0.0577673301188)
        (0.2, 0.0134415307226)
        (0.1, 0.00584834914651)
        (0.05, 0.00233131887483)
        (0.025, 0.0007641960427696694)
      };
      \addplot[color=blue, mark=square, line width=1pt] coordinates{
        (0.4,   0.34638745095639106) 
        (0.3,   0.23029414947793211) 
        (0.225, 0.20878247497704897) 
        (0.2,  0.027115601530760022) 
        (0.1,  0.008428116313046033) 
        (0.05, 0.002998342141527602) 
        (0.025, 0.0005017513817364534) 
      };
    \addplot[color=black, line width=1pt] coordinates{
      (0.2,0.02)
      (0.1,0.01)
      (0.05,0.005)
      (0.025,0.00250)
    };
    
    \addplot[color=black, dashed, line width=1pt] coordinates{
      (0.2,0.072)
      (0.1,0.018)
      (0.05,0.00450)
      (0.025,0.00110)
    };
    
    \legend{$\widetilde{A}_\eps^{\text{ME}}$, $\overline{A}_\eps^{\text{ME}}$, $y \propto \eps$, $y \propto \eps^2$};
    \end{axis}
  \end{tikzpicture}
  \caption{Error~\eqref{eq:errorAB} between the homogenized coefficient $A_\star$ and the effective coefficients $\overline{A}_\eps^{\text{ME}}$ and $\widetilde{A}_\eps^{\text{ME}}$, as a function of $\eps$.}
  \label{fig:absoluteerrorAbar_cohen}
\end{figure}
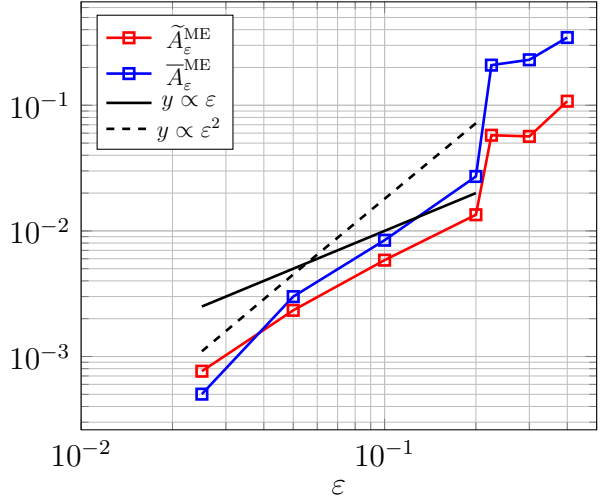

\begin{figure}
  \centering
  \begin{tikzpicture}[scale=1]
    \begin{axis}[
        xlabel=$\eps$,
        legend pos=north west,
        grid=both,
        xmin=0.001, xmax=0.41,
      ]        
      \addplot[color=green, mark=square, line width=1pt] coordinates{
        (0.4,0.506197428387412)
        (0.3,0.31561411914825654)
        (0.275,0.2935850458478148)
        (0.225,0.24828552577733126)
        (0.2,0.2605886663215005)
        (0.175,0.20274077516587577)
        (0.15,0.17581661190570572)
        (0.125,0.1700830909963723)
        (0.1,0.1358242935514575)
        (0.05,0.06554926904942866)
        (0.025, 0.03089304389077087) 
      };
      \addplot[color=red, mark=square, line width=1pt] coordinates{
        (0.4,0.370219027706)
        (0.3,0.2669395031)
        (0.275,0.237120492812)
        (0.225,0.198317945141)
        (0.2,0.253383770486)
        (0.175,0.182964188982)
        (0.15,0.151084241961)
        (0.125,0.165678532659)
        (0.1,0.132182283503)
        (0.05,0.0642822477006)
      };
      \addplot[color=blue, mark=square, line width=1pt] coordinates{
        (0.4,0.30858959564688465)
        (0.3,0.24358776885836592)
        (0.275,0.20405252168941213)
        (0.225,0.20251251182164542)
        (0.2,0.24795174182750587)
        (0.175,0.1513967014973608)
        (0.15,0.12627058861298684)
        (0.125,0.16247646733830642)
        (0.1,0.13095018539702805)
        (0.05,0.06402922015084547)
        (0.025, 0.031050925879217033) 
      };
      \legend{$A_\star$, $\widetilde{A}_\eps^{\text{ME}}$, $\overline{A}_\eps^{\text{ME}}$};
    \end{axis}
  \end{tikzpicture}
  \caption{Error~\eqref{eq:criteriarelativ} (computed with $Q=11$) for $\overline{A} \in \left\{A_\star, \overline{A}_\eps^{\text{ME}}, \widetilde{A}_\eps^{\text{ME}} \right\}$, as a function of $\eps$.}
  \label{fig:absoluteerror_cohen}
\end{figure}

\subsubsection{Random stationary case} \label{sec:random_case}

We now focus on the random setting presented in Section~\ref{subsection:testcaserandom}. 

Figure~\ref{fig:abarastarrandom} shows the relative error~\eqref{eq:errorAB} between the homogenized coefficient $A_\star$ and any of the effective coefficients $\overline{A}^{\text{ME}}_\eps$, $\overline{A}^{\text{MS}}_\eps$ and $\overline{A}^{\text{MV}}_\eps$. We recall that the value found for $\overline{A}^{\text{ME}}_\eps$, $\overline{A}^{\text{MS}}_\eps$ and $\overline{A}^{\text{MV}}_\eps$ explicitly depends on the specific realizations of $a_{\text{rand}}(\cdot, \omega)$ chosen to compute the expectation involved in the objective function. The errors we plot are therefore random and we show their mean values supplemented with a 95\% confidence interval.

Figure~\ref{fig:abarastarrandom} shows that, as $\eps \to 0$, all the strategies allow to recover the homogenized coefficient $A_\star$.

We observe that the coefficients $\overline{A}_\eps^{\text{ME}}$ and $\overline{A}_\eps^{\text{MS}}$ coincide (their difference is of the order of 0.1\%). We recall (see Section~\ref{section:section4}) that this is expected from a theoretical perspective, when maximizing over all $g \in L^2_m(\partial \mathcal{D})$. In practice, we have observed some mismatch in the periodic example presented in Section~\ref{sec:periodic_case}, which is presumably due to the approximation of the supremum~\eqref{eq:supproblem} by a maximum over a finite dimensional space, and the fact that the boundary conditions selected in Section~\ref{section:changingsupbymax} to generate this finite dimensional space were identified based on the assumptions that the effective and homogenized diffusion coefficients at stake are scalar, an assumption which does not hold for the periodic example of Section~\ref{sec:periodic_case}. In the random setting considered here, this assumption is now satisfied (exactly for $A_\star$, which is equal to $8 \, \text{Id}$, and with an excellent accuracy for the effective coefficients $\overline{A}_\eps^{\text{ME}}$ and $\overline{A}_\eps^{\text{MS}}$, for which the two diagonal coefficients typically differ by 0.1\% and for which the off-diagonal coefficient is 1000 times smaller than the diagonal coefficients). We presume that this particularity implies that maximizing over $V_n^P(\partial\mathcal{D})$ rather than $L^2_m(\partial \mathcal{D})$ makes a smaller difference, hence the matching of the two coefficients $\overline{A}_\eps^{\text{ME}}$ and $\overline{A}_\eps^{\text{MS}}$.


\begin{figure}[htbp]
    \centering
    \begin{tikzpicture}[scale=1]
      \begin{axis}[
          xlabel=$\eps$,
          legend pos=north west,
          grid=both,
          legend style={nodes={scale=0.5}},
      ]        
          \addplot[color=red, mark=square, line width=1pt] coordinates{
            (0.25,  0.0994095386183342)
            (0.16666666,  0.0755055537385)
            (0.1,    0.0464879287853)
            (0.05,   0.023042349563819344)
            (0.025,  0.011422895089157583)
          };
          \addplot[color=red, mark=square, line width=1pt, dashed] coordinates{
            (0.25,  0.0924772519673775)
            (0.16666666,  0.0700037422248)
            (0.1,    0.0436974208034)
            (0.05,   0.021167325787407735)
            (0.025,  0.01034933927580919)
          };
          \addplot[color=red, mark=square, line width=1pt, dashed] coordinates{
            (0.25,  0.1063418252692909)
            (0.16666666,  0.0810073652522)
            (0.1,    0.0492784367672)
            (0.05,   0.024917373340230952)
            (0.025,  0.012496450902505975)
          };

          \addplot[color=orange, mark=square, line width=1pt] coordinates{
            (0.25,  0.09948988828327754)
            (0.16666666,  0.0756080783967)
            (0.1,    0.0465009741217)
            (0.05,   0.02323398548734707)
            (0.025,  0.011194693496057725)
          };
          \addplot[color=orange, mark=square, line width=1pt, dashed] coordinates{
            (0.25,  0.09255416225476085)
            (0.16666666,  0.070134187016)
            (0.1,    0.0437169258916)
            (0.05,   0.021213750040200523)
            (0.025,  0.010261705420688782)
          };
          \addplot[color=orange, mark=square, line width=1pt, dashed] coordinates{
            (0.25,  0.10642561431179423)
            (0.16666666,  0.0810819697774)
            (0.1,    0.0492850223518)
            (0.05,   0.02525422093449362)
            (0.025,  0.012127681571426668)
          };

          \addplot[color=brown, mark=square, line width=1pt] coordinates{
            (0.25,  0.0810006949548212)
            (0.16666666,  0.0556948946575)
            (0.1,    0.0325823486114)
            (0.05,   0.014123217290285265)
            (0.025,  0.0061492704951020384)
          };
          \addplot[color=brown, mark=square, line width=1pt, dashed] coordinates{
            (0.25,  0.07425774971169402)
            (0.16666666,  0.0502905574333)
            (0.1,    0.0292613113113)
            (0.05,   0.0122143222451513)
            (0.025,  0.005265053521258134)
          };
          \addplot[color=brown, mark=square, line width=1pt, dashed] coordinates{
            (0.25,  0.08774364019794839)
            (0.16666666,  0.0610992318818)
            (0.1,    0.0359033859116)
            (0.05,   0.016032112335419228)
            (0.025,  0.007033487468945943)
          };

          \legend{$\overline{A}_\eps^{\text{ME}}$, CI $95\%$, ,$\overline{A}_\eps^{\text{MS}}$, CI $95\%$, ,$\overline{A}_\eps^{\text{MV}}$, CI $95\%$,};
      \end{axis}
  \end{tikzpicture}
\caption{Error~\eqref{eq:errorAB} between the homogenized coefficient $A_\star$ and the effective coefficients $\overline{A}_\eps^{\text{ME}}$, $\overline{A}_\eps^{\text{MS}}$ and $\overline{A}_\eps^{\text{MV}}$, as a function of $\eps$. At the scale of the figure, the curves corresponding to the effective coefficients $\overline{A}_\eps^{\text{ME}}$ and $\overline{A}_\eps^{\text{MS}}$ lie on top of each other.}
\label{fig:abarastarrandom}
\end{figure}
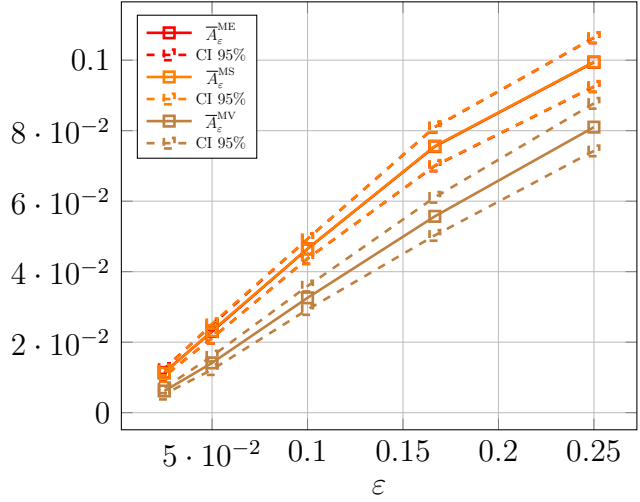

\medskip

Similarly as in the periodic case, we define the criterion
\begin{equation} \label{eq:criteriarelativrandom}
  \text{Err}^{\mathbb{E}}_{\eps, Q}(\overline{A}) = \sup_{g \in V_n^Q(\partial\mathcal{D})} \frac{\left\| \mathbb{E}\left[u_\eps(g)\right] - u(\overline{A}, g) \right\|_{L^2(\mathcal{D})}}{\left\| \mathbb{E}\left[u_\eps(g)\right] \right\|_{L^2(\mathcal{D})}}
\end{equation}
in order to assess the quality of the different effective operators in comparison to the oscillatory operator. 
In practice, we choose $Q = 11 \gg P$, so that we test the effective coefficients $\overline{A}_\eps$ (computed with $P$ boundary conditions) on a large number of new boundary conditions. Figure~\ref{fig:ubarustarrandom} shows the value of $\text{Err}^{\mathbb{E}}_{\eps, Q}$ for the different effective coefficients. 

For small values of $\eps$, all the coefficients tend to perform equally accurately, and with the accuracy of the homogenized coefficient $A_\star$. For instance, $\text{Err}^{\mathbb{E}}_{\eps, Q}(\overline{A})$ is of the order of 4\% for $\eps=0.025$, regardless of the choice of $\overline{A} \in \left\{ A_\star, \overline{A}_\eps^{\text{ME}}, \overline{A}_\eps^{\text{MS}}, \overline{A}_\eps^{\text{MV}} \right\}$.

For larger values of $\eps$, we observe that the homogenized coefficient $A_\star$ yields an approximation error that is larger (by a few percent) than the error associated with the effective coefficients obtained through the other approaches (i.e. an error of the order of 25\% instead of 20\% for $\eps=0.25$). The three coefficients $\overline{A}_\eps^{\text{ME}}$, $\overline{A}_\eps^{\text{MS}}$ and $\overline{A}_\eps^{\text{MV}}$ provide comparable errors (the 95\% confidence intervals overlap). 


\begin{figure}[htbp]
    \centering
    \begin{tikzpicture}[scale=1]
      \begin{axis}[
          xlabel=$\eps$,
          legend pos=north west,
          grid=both,
          legend style={nodes={scale=0.6}},
      ]     
        \addplot[color=blue, mark=square, line width=1pt] coordinates{
            (0.25, 0.2514602049856317)
            (0.16666666, 0.218833656532)
            (0.1,   0.14454040931)
            (0.05,  0.08299933399243875)
            (0.025, 0.042857683393596094)
      };
      \addplot[color=blue, mark=square, line width=1pt, dashed] coordinates{
            (0.25, 0.23935003493172707)
            (0.16666666, 0.207693063889)
            (0.1,   0.138091475711)
            (0.05,  0.07924721988123462)
            (0.025, 0.04104579117853025)
      };
      \addplot[color=blue, mark=square, line width=1pt, dashed] coordinates{
            (0.25, 0.26357037503953634)
            (0.16666666, 0.229974249175)
            (0.1,   0.15098934291)
            (0.05,  0.08675144810364288)
            (0.025,0.044669575608661936)
      };

        \addplot[color=red, mark=square, line width=1pt] coordinates{
            (0.25, 0.1886136098426988)
            (0.16666666, 0.17211115945)
            (0.1,   0.112935438125)
            (0.05,  0.06857062376649105)
            (0.025, 0.0358088173934682)%
      };
      \addplot[color=red, mark=square, line width=1pt, dashed] coordinates{
            (0.25, 0.1771213873016212)
            (0.16666666, 0.162238310842)
            (0.1,   0.106698072793)
            (0.05,  0.06527463556538053)
            (0.025, 0.03434658791387013)
      };
      \addplot[color=red, mark=square, line width=1pt, dashed] coordinates{
            (0.25, 0.20010583238377638)
            (0.16666666, 0.181984008058)
            (0.1,   0.119172803457)
            (0.05,  0.07186661196760156)
            (0.025, 0.03740310439970034)
      };

        \addplot[color=orange, mark=square, line width=1pt] coordinates{
            (0.25, 0.1886136098426988)
            (0.16666666, 0.172123227265)
            (0.1,   0.112925361667)
            (0.05,  0.06949868613481484)
            (0.025, 0.03575711988898486)
      };
      \addplot[color=orange, mark=square, line width=1pt, dashed] coordinates{
            (0.25, 0.1771213873016212)
            (0.16666666, 0.162246467687)
            (0.1,   0.106679646221)
            (0.05,  0.06638212328980393)
            (0.025, 0.03411113537826938)
      };
      \addplot[color=orange, mark=square, line width=1pt, dashed] coordinates{
            (0.25, 0.20010583238377638)
            (0.16666666, 0.181999986842)
            (0.1,   0.119171077113)
            (0.05,  0.07261524897982576)
            (0.025, 0.03740310439970034)
      };

        \addplot[color=brown, mark=square, line width=1pt] coordinates{
            (0.25, 0.19696968408166557)
            (0.16666666, 0.18169927702)
            (0.1,   0.121269318634)
            (0.05,  0.07299271246850803)
            (0.025, 0.038687219908412944)
      };
      \addplot[color=brown, mark=square, line width=1pt, dashed] coordinates{
            (0.25, 0.1847774825547361)
            (0.16666666, 0.170926078667)
            (0.1,   0.114762346464)
            (0.05,  0.0693644647654533)
            (0.025, 0.03693270366861936)
      };
      \addplot[color=brown, mark=square, line width=1pt, dashed] coordinates{
            (0.25, 0.20916188560859503)
            (0.16666666, 0.192472475372)
            (0.1,   0.127776290805)
            (0.05,  0.07662096017156275)
            (0.025, 0.04044173614820653)
      };

          \legend{$A_\star$, CI $95\%$, ,$\overline{A}_\eps^{\text{ME}}$, CI $95\%$, ,$\overline{A}_\eps^{\text{MS}}$, CI $95\%$, ,$\overline{A}_\eps^{\text{MV}}$, CI $95\%$, };
      \end{axis}
  \end{tikzpicture}
\caption{Error~\eqref{eq:criteriarelativrandom} (computed with $Q=11$) for $\overline{A} \in \left\{ A_\star, \overline{A}_\eps^{\text{ME}}, \overline{A}_\eps^{\text{MS}}, \overline{A}_\eps^{\text{MV}} \right\}$, as a function of $\eps$.}
\label{fig:ubarustarrandom}
\end{figure}

\subsubsection{Robustness with respect to noise} \label{sec:noise_general}

In this section, we explore the robustness of our approach with respect to noise. Considering the deterministic periodic test case introduced in Section~\ref{subsection;testcaseperiodic}, we investigate two approaches. In the first one, noise is regarded as an undesired discrepancy in the measurement. In the second one, noise is purposely introduced in the optimization procedure to enhance robustness of the output. 

\paragraph{Noise as a disturbance.} We explore here the robustness of our approach with respect to the presence of noise in the value of the energy. To fix the ideas, we choose $\eps = 0.025$ and select the first $P=3$ boundary conditions $\phi_1$, $\phi_2$ and $\phi_3$. We introduce a multiplicative noise, so that the relation between the noisy energy $\mathcal{E}(A_\eps, \phi_i;\sigma)$ and the exact energy $\mathcal{E}(A_\eps, \phi_i)$ is modeled by
\begin{equation} \label{eq:noisemodel}
  \mathcal{E}(A_\eps, \phi_i; \sigma) = (1 + \sigma \, \eta_i) \, \mathcal{E}(A_\eps, \phi_i),
\end{equation}
where $\{ \eta_i \}_{1 \leq i \leq 3}$ denote i.i.d. random Gaussian variables with mean $0$ and variance $1$, and where $\sigma$ quantifies the magnitude of the noise. We denote by $\overline{A}_{\eps, \sigma}^{\text{ME}}$ the constant effective coefficient computed on the basis of these noisy values of energy. Notice that the noise model~\eqref{eq:noisemodel} can be interpreted as an approximate model accounting for an error $\delta\phi_i \in L^2_m(\partial \mathcal{D})$ associated to the boundary condition $\phi_i$. Indeed, we have
\begin{align*}
  \mathcal{E}(A_\eps, \phi_i +\delta\phi_i)
  &= - \frac{1}{2} \int_{\partial \mathcal{D}} (\phi_i +\delta\phi_i) \, u_\eps(\phi_i +\delta\phi_i)
  \\
  &= - \frac{1}{2} \int_{\partial \mathcal{D}} \phi_i \, u_\eps(\phi_i) - \frac{1}{2} \int_{\partial \mathcal{D}} \delta\phi_i \, u_\eps(\phi_i) - \frac{1}{2} \int_{\partial \mathcal{D}} \phi_i \, u_\eps(\delta \phi_i) - \frac{1}{2} \int_{\partial \mathcal{D}} \delta\phi_i \, u_\eps(\delta\phi_i)
  \\
  &= - \frac{1}{2} \int_{\partial \mathcal{D}} \phi_i \, u_\eps(\phi_i) - \int_{\partial \mathcal{D}} \delta\phi_i \, u_\eps(\phi_i) + o(\|\delta\phi_i\|_{L^2(\partial\mathcal{D})})
  \\
  &= \left( 1 + \underbrace{\frac{2 \int_{\partial \mathcal{D}} \delta\phi_i \, u_\eps(\phi_i)}{\int_{\partial \mathcal{D}} \phi_i \, u_\eps(\phi_i) }}_{\text{approx. by the random number $\sigma \, \eta_i$}} \right)\mathcal{E}(A_\eps, \phi_i) + o(\|\delta\phi_i\|_{L^2(\partial\mathcal{D})}),
\end{align*}
where we have used that $\dis \int_{\partial \mathcal{D}} \phi_i \, u_\eps(\delta\phi_i) = \int_{\partial \mathcal{D}} \delta\phi_i \, u_\eps(\phi_i)$.

Figure~\ref{fig:errorgaussiannoise} shows the mean value and 95\% confidence intervals (evaluated on the basis of 40 independent realizations of $\{ \eta_i \}_{1 \leq i \leq 3}$, that lead to 40 independent realizations of $\overline{A}_{\eps, \sigma}^{\text{ME}}$) of the relative $L^2$-error between the coefficient $\overline{A}_\eps^{\text{ME}}$ (computed using the exact energy values) and $\overline{A}_{\eps, \sigma}^{\text{ME}}$ (the $L^2$ norm of a matrix $B$ being defined by $\dis \| B \|_2 = \sqrt{\sum_{1\leq i \leq j \leq d} B_{ij}^2}$). The results indicate that the error grows linearly with the noise standard deviation $\sigma$. The proportionality factor is around $4$ (for instance, a 2\% error in the energy measurements results in a 9\% error in the reconstructed coefficient).

\begin{figure}[htbp]
    \centering
    \begin{tikzpicture}[scale=1]
      \begin{axis}[
          xlabel=$\sigma$,
          legend pos=north west,
          grid=both,
          every x tick label/.append style={/pgf/number format/.cd,fixed,precision=2},
          legend style={nodes={scale=0.7}}
      ]  
      \addplot[color=red, mark=square, line width=1pt] coordinates{
        (0.1,0.39)
        (0.05,0.21)
        (0.01,0.051)
      };
      \addplot[color=red, mark=square, line width=1pt, dashed] coordinates{
        (0.1,0.31)
        (0.05,0.16)
        (0.01,0.037)
      };
      \addplot[color=red, mark=square, line width=1pt, dashed] coordinates{
        (0.1,0.48)
        (0.05,0.26)
        (0.01,0.063)
      };

      \legend{$\|\overline{A}_{\eps, \sigma}^{\text{ME}} - \overline{A}_\eps^{\text{ME}}\|_2 \ / \ \|\overline{A}_\eps^{\text{ME}}\|_2$, CI $95\%$};
      \end{axis}
    \end{tikzpicture}
    \caption{Error $\frac{\|\overline{A}_{\eps, \sigma}^{\text{ME}} - \overline{A}_\eps^{\text{ME}}\|_2}{\|\overline{A}_\eps^{\text{ME}}\|_2}$ as a function of the noise standard deviation $\sigma$ (for $\eps=0.025$).}
    \label{fig:errorgaussiannoise}
\end{figure}

\paragraph{Introducing noise on purpose.} We now consider an alternative viewpoint. Inspired by ideas from Recurrent Neural Networks (see e.g.~\cite[Section 7.6.2]{bengio2017deep}), we treat noise as a constructive element. More precisely, we investigate how noise can be purposely added in the modeling procedure to enhance the robustness of the output effective coefficient $\overline{A}$. 

\subparagraph{Modeling.} Mathematically, the problem is formulated as
\begin{equation} \label{eq:infsupnoiseApipeline}
  \inf_{\overline{A} \in \mathcal{S}_{\alpha, \beta}} \ \sup_{\scriptsize \begin{aligned} &g \in L^2_m(\partial \mathcal{D}), \\[-0.5em] \|&g\|_{L^2(\partial \mathcal{D})}=1 \end{aligned}} \Big| \mathcal{E}(A_\eps, g) - \mathbb{E}\left(\mathcal{E}(\overline{A} + \eta, g)\right) \Big|^2,
\end{equation}
where $\eta$ denotes a random (matrix-valued) variable. 

In this framework, noise is introduced at the level of the coefficient $\overline{A}$. A typical motivation for this stems from issues of experimental reproducibility. To illustrate this fact, suppose that Problem~\eqref{eq:infsup} admits an optimal solution $\overline{A}^{\text{opt}}$. The material represented by $\overline{A}^{\text{opt}}$ provides the best effective material describing the heterogeneous problem~\eqref{eq:diffusive}. However, in practice, the exact manufacturing of $\overline{A}^{\text{opt}}$ may be unfeasible. Instead, one may only be able to construct an approximation $\widetilde{A}^{\text{opt}}$, and there is of course in general no guarantee that $\widetilde{A}^{\text{opt}}$ captures the properties of the heterogeneous problem~\eqref{eq:diffusive}. By inserting noise into the parameter $\overline{A}$ through the random variable $\eta$, we enhance the robustness with respect to perturbations of the optimized effective coefficient. 

\subparagraph{An example in dimension one.} In order to intuitively understand how introducing noise in this manner affects the effective coefficient, we temporarily consider the one-dimensional setting. Despite its extreme simplicity, the setting provides a useful guideline for higher dimensional settings. We fix $\mathcal{D}= (0,1)$. First, we observe that the unit sphere of $L^2_m(\partial \mathcal{D})$ restricts (up to a sign change) to the single boundary condition $g$ defined by $g(0) = -g(1) = \sqrt{2}^{-1}$. Second, we make some simplifications by considering the asymptotic regime $\eps \rightarrow 0$ and by simplifying the constraints on $\overline{A}$. Problem~\eqref{eq:infsupnoiseApipeline} becomes
\begin{equation} \label{eq:infsupnoiseApipeline1D}
  \inf_{\overline{a}>0} \ \Big| \mathcal{E}(a_\star, g) - \mathbb{E}\left(\mathcal{E}(\overline{a} + \eta, g)\right) \Big|^2.
\end{equation}
Then, we observe that the solution $u(a_\star,g)$ is given by $w(g)/a_\star$, with $w(g)$ the unique zero-mean solution to $\dis -\frac{d^2 w(g)}{dx^2} = 0$ in $(0,1)$ with the boundary conditions $\dis \frac{d w(g)}{dx}(x=0)=-g(0)$ and $\dis \frac{d w(g)}{dx}(x=1)=g(1)$. Consequently, the energy $\mathcal{E}(a_\star, g)$ writes $\dis \frac{-1}{2 \, a_\star} \, \int_{\partial \mathcal{D}} g \, w(g)$. A similar expression is found for $\mathcal{E}(\overline{a}+\eta, g)$. Hence, up to an irrelevant multiplicative constant, Problem~\eqref{eq:infsupnoiseApipeline1D} writes
\begin{equation*}
  \inf_{\overline{a}>0} \ \left| \frac{1}{a_\star} - \mathbb{E}\left(\frac{1}{\overline{a} + \eta}\right) \right|^2.
\end{equation*}
In the case where $\eta$ follows a uniform distribution in $[\alpha_1,\alpha_2]$ (with $\alpha_2 > \alpha_1 \geq 0$), we compute
\begin{equation*}
  f(\overline{a}) := \left| \frac{1}{a_\star} - \mathbb{E}\left(\frac{1}{\overline{a} + \eta}\right) \right|^2 = \left| \frac{1}{a_\star} - \frac{1}{\alpha_2-\alpha_1} \, \ln\left(\frac{\overline{a}+\alpha_2}{\overline{a}+\alpha_1} \right) \right|^2.
\end{equation*}
The minimizer can be identified analytically. Introducing the notation $\ell = \alpha_2-\alpha_1$, its expression depends on the sign of $\tau := \ln(\alpha_2/\alpha_1) - \ell/a_\star$. We only consider here the case when $\tau > 0$, which for instance holds whenever $\alpha_2 < a_\star$ (using that $y \ln y \geq y -1$ for any positive $y$, we systematically have that $\ln(\alpha_2/\alpha_1) \geq \ell/\alpha_2$, and the fact that $\alpha_2 < a_\star$ then implies that $\tau > 0$). In the case when $\tau > 0$, the function $f$ attains its minimum at
\begin{equation*}
  \overline{a}^{\text{opt}} = \frac{\alpha_2 - \alpha_1 \exp(\ell/a_\star)}{\exp(\ell/a_\star) - 1} > 0.
\end{equation*}
We observe that $\overline{a}^{\text{opt}}$ does not coincide with $a_\star$, which would be the minimizer of $f$ in the case where no noise is added. In the case where $\alpha_1 = 0$ and $\alpha_2 \rightarrow 0$, the optimizer $\overline{a}^{\text{opt}}$ converges to $a_\star$.

\subparagraph{Numerical simulations in dimension two.} The algorithm presented in Section~\ref{subsection:algorithm} to solve~\eqref{eq:infsup} can be adapted mutatis mutandis to solve~\eqref{eq:infsupnoiseApipeline}. In particular, the expectation in~\eqref{eq:infsupnoiseApipeline} is approximated by a mean over $M_1=40$ independent realizations of $\eta$. The supremum over $L^2_m(\partial \mathcal{D})$ is again replaced by a supremum over the space spanned by the first modes $\{ \phi_p \}_{1 \leq p \leq P}$ of the operator $\mathcal{R}$ defined by~\eqref{eq:laplacianinverse}. 

\smallskip

In our experiment, we assume that the components of $\eta$ are independent from one another, and that each of them is distributed according to a Gaussian distribution with zero mean and variance $\sigma^2$. We denote the optimizer to~\eqref{eq:infsupnoiseApipeline} by $\overline{A}_{\eps, \sigma}^{\text{ME}}$. We fix $\eps = 0.05$ and $P=3$.

Figure~\ref{fig:errorcoeffnoiseonpipeline} shows the relative difference between the coefficients $\overline{A}_{\eps, \sigma}^{\text{ME}}$ and $\overline{A}_\eps^{\text{ME}}$ as a function of $\sigma$. We also provide $95\%$ confidence intervals, which are obtained by considering $M_2=40$ resolutions of~\eqref{eq:infsupnoiseApipeline} using independent realizations of $\eta$ to estimate the expectations. Even though the error increases when $\sigma$ increases (which is expected), it remains small even for relatively large values of $\sigma$. For instance, when $\sigma = 2$, the error between the coefficients $\overline{A}_{\eps, \sigma}^{\text{ME}}$ and $\overline{A}_\eps^{\text{ME}}$ is about $3\%$. This error is small compared to the perturbation applied to the coefficient $\overline{A}$. Indeed, $\sigma = 2$ corresponds to a perturbation 
of roughly $10\%$, when compared to the unperturbed reference matrix $\overline{A}_\eps^{\text{ME}}$ (its largest entry is $[\overline{A}_\eps^{\text{ME}}]_{11} \approx 20$). This suggests that the coefficient $\overline{A}_\eps^{\text{ME}}$ is robust to (significant) perturbations and that the identification procedure is actually stable. 

\begin{figure}[!ht]
  \centering
  \begin{tikzpicture}[scale=1]
    \begin{axis}[
        xlabel=$\sigma$,
        legend pos=north west,
        grid=both,
        every x tick label/.append style={/pgf/number format/.cd,fixed,precision=2},
        legend style={nodes={scale=0.7}}
      ]  
      \addplot[color=red, mark=square, line width=1pt] coordinates{
        (4,0.13)
        (2,0.033)
        (1,0.011)
        (0.1,0.0010)
      };
      \addplot[color=red, mark=square, line width=1pt, dashed] coordinates{
        (4,0.12)
        (2,0.031)
        (1,0.011)
        (0.1,0.0010)
      };
      \addplot[color=red, mark=square, line width=1pt, dashed] coordinates{
        (4,0.14)
        (2,0.035)
        (1,0.012)
        (0.1,0.0010)
      };
      
      \legend{$\|\overline{A}_{\eps, \sigma}^{\text{ME}}-\overline{A}_\eps^{\text{ME}}\|_2 \ / \ \|\overline{A}_\eps^{\text{ME}}\|_2$, CI $95\%$};
    \end{axis}
  \end{tikzpicture}
  \caption{Relative difference $\frac{\|\overline{A}_{\eps, \sigma}^{\text{ME}}-\overline{A}_\eps^{\text{ME}}\|_2}{\|\overline{A}_\eps^{\text{ME}}\|_2}$ as a function of the noise standard deviation $\sigma$ (for $\eps=0.05$).}
  \label{fig:errorcoeffnoiseonpipeline}
\end{figure}

\section*{Acknowledgments}

The authors would like to thank Emmanuel Baranger, Ludovic Chamoin and Federica Daghia at ENS Paris-Saclay for stimulating and enlightening discussions, and for the detailed presentation of the experimental aspects of their research that helped designing the strategy. They are also grateful to Albert Cohen at Sorbonne Universit\'e for his many comments about this work, and to Habib Ammari at ETH Z\"urich for pointing out several interesting and practically relevant approaches for inverse problems. They thank Olivier Pantz at Universit\'e de Nice for his remarks on a preliminary version of this manuscript.

The first two authors are grateful to the ONR and the EOARD for their continuous support, currently under grants N00014-25-1-2299 and FA8655-24-1-7057, respectively.

\bibliographystyle{plain}
\bibliography{biblio}

\begin{thebibliography}{10}

\bibitem{abdulle2019numerical}
A.~Abdulle and A.~Di~Blasio.
\newblock Numerical homogenization and model order reduction for multiscale
  inverse problems.
\newblock {\em Multiscale Modeling \& Simulation}, 17(1):399--433, 2019.

\bibitem{zbMATH06685276}
H.~Ammari, T.~Boulier, J.~Garnier, and H.~Wang.
\newblock Shape recognition and classification in electro-sensing.
\newblock {\em Proc. Natl. Acad. Sci. USA}, 111(32):11652--11657, 2014.

\bibitem{zbMATH06566400}
H.~Ammari, J.~Garnier, L.~Giovangigli, W.~Jing, and J.-K. Seo.
\newblock Spectroscopic imaging of a dilute cell suspension.
\newblock {\em J. Math. Pures Appl.}, 105(5):603--661, 2016.

\bibitem{zbMATH02119069}
H.~Ammari and H.~Kang.
\newblock {\em Reconstruction of small inhomogeneities from boundary
  measurements}, volume 1846 of {\em Lect. Notes Math.}
\newblock Berlin: Springer, 2004.

\bibitem{ammari_uhlmann}
H.~Ammari and G.~Uhlmann.
\newblock Reconstruction of the potential from partial {C}auchy data for the
  {S}chr\"odinger equation.
\newblock {\em Indiana University Mathematics Journal}, 53(1):169--184, 2004.

\bibitem{papanicolau1978asymptotic}
A.~Bensoussan, J.-L. Lions, and G.~Papanicolaou.
\newblock {\em Asymptotic analysis for periodic structures}.
\newblock AMS Chelsea Publishing, 2011.
\newblock (reprint of the 1978 original with corrections and bibliographical
  additions).

\bibitem{bhattacharya2023learning}
K.~Bhattacharya, N.~Kovachki, A.~Rajan, A.~M. Stuart, and M.~Trautner.
\newblock Learning homogenization for elliptic operators.
\newblock {\em SIAM Journal on Numerical Analysis}, 62(4):1844--1873, 2024.

\bibitem{blanc2023homogenization}
X.~Blanc and C.~Le~Bris.
\newblock {\em Homogenization theory for multiscale problems: an introduction},
  volume~21 of {\em Modeling, Simulation and Applications}.
\newblock Springer, 2023.

\bibitem{halikias2023}
N.~Boull\'e, D.~Halikias, and A.~Townsend.
\newblock Elliptic {PDE} learning is provably data-efficient.
\newblock {\em Proceedings of the National Academy of Sciences},
  120(39):e2303904120, 2023.

\bibitem{cherkaev2001inverse}
E.~Cherkaev.
\newblock Inverse homogenization for evaluation of effective properties of a
  mixture.
\newblock {\em Inverse Problems}, 17:1203--1218, 2001.

\bibitem{cherkaev2008dehomogenization}
E.~Cherkaev and M.-J.~Y. Ou.
\newblock Dehomogenization: reconstruction of moments of the spectral measure
  of the composite.
\newblock {\em Inverse Problems}, 24(6):065008, 2008.

\bibitem{chung2023multi}
E.~Chung, W.~T. Leung, S.-M. Pun, and Z.~Zhang.
\newblock Multi-agent reinforcement learning aided sampling algorithms for a
  class of multiscale inverse problems.
\newblock {\em Journal of Scientific Computing}, 96(2):55, 2023.

\bibitem{frederick2014numerical}
C.~Frederick and B.~Engquist.
\newblock Numerical methods for multiscale inverse problems.
\newblock {\em Communications in Mathematical Sciences}, 7:305--328, 2017.

\bibitem{bengio2017deep}
I.~Goodfellow, Y.~Bengio, and A.~Courville.
\newblock {\em Deep learning}.
\newblock Adaptive Computation and Machine Learning series. MIT press, 2016.

\bibitem{hecht_freefem}
F.~Hecht.
\newblock New development in {F}ree{F}em++.
\newblock {\em J. Numer. Math.}, 20(3-4):251--266, 2012.

\bibitem{le2018best}
C.~Le~Bris, F.~Legoll, and S.~Lemaire.
\newblock On the best constant matrix approximating an oscillatory
  matrix-valued coefficient in divergence form operators.
\newblock {\em ESAIM: Control, Optimisation and Calculus of Variations},
  24(4):1345--1380, 2018.

\bibitem{le2013approximation}
C.~Le~Bris, F.~Legoll, and K.~Li.
\newblock Approximation grossi\`ere d'un probl\`eme elliptique \`a coefficients
  hautement oscillants [{C}oarse approximation of an elliptic problem with
  highly oscillatory coefficients].
\newblock {\em C. R. Acad. Sci. Paris, Ser. I}, 351(7-8):265--270, 2013.

\bibitem{lions2005some}
J.-L. Lions.
\newblock Some aspects of modeling problems in distributed parameter systems.
\newblock In A.~Ruberti, editor, {\em Distributed Parameter Systems: Modeling
  and Identification: Proceedings of the IFIP Working Conference Rome, Italy,
  June 21--24, 1976}, pages 11--41. Springer, 1978.

\bibitem{lochner2023identification}
T.~Lochner and M.~A. Peter.
\newblock Identification of microstructural information from macroscopic
  boundary measurements in steady-state linear elasticity.
\newblock {\em Mathematical Methods in the Applied Sciences}, 46(1):1295--1316,
  2023.

\bibitem{nolen2009fine}
J.~Nolen and G.~Papanicolaou.
\newblock Fine scale uncertainty in parameter estimation for elliptic
  equations.
\newblock {\em Inverse Problems}, 25:115021, 2009.

\bibitem{nolen2012multiscale}
J.~Nolen, G.~A. Pavliotis, and A.~M. Stuart.
\newblock Multiscale modelling and inverse problems.
\newblock In I.~G. Graham, T.~Hou, O.~Lakkis, and R.~Scheichl, editors, {\em
  Numerical Analysis of Multiscale Problems}, volume~83 of {\em Lecture Notes
  in Computationnal Science and Engineering}, pages 1--34. Springer, 2012.

\bibitem{papanicolaou1979boundary}
G.~Papanicolaou and S.~R.~S. Varadhan.
\newblock Boundary value problems with rapidly oscillating random coefficients.
\newblock In J.~Fritz, J.~L. Lebaritz, and D.~Szasz, editors, {\em Proc.
  Colloq. on Random Fields: Rigorous Results in Statistical Mechanics and
  Quantum Field Theory (Esztergom 1979), Colloq. Math. Soc., Janos Bolyai, 10,
  North-Holland, Amsterdam (1981)}, pages 853--873, 1981.

\bibitem{park2022physics}
J.~S.~R. Park and X.~Zhu.
\newblock Physics-informed neural networks for learning the homogenized
  coefficients of multiscale elliptic equations.
\newblock {\em Journal of Computational Physics}, 467:111420, 2022.

\bibitem{PhDthesis}
S.~Ruget.
\newblock {\em Effective approximations for multiscale {PDEs} based on limited
  information}.
\newblock PhD thesis, Ecole Nationale des Ponts et Chauss\'ees, 2025.
\newblock (available at {\tt https://hal.science/tel-05450477}).

\bibitem{stuart2010inverse}
A.~M. Stuart.
\newblock Inverse problems: a {B}ayesian perspective.
\newblock {\em Acta Numerica}, 19:451--559, 2010.

\end{thebibliography}

\end{document}